\newcommand{\R}{\mathbb{R}}
\newcommand{\e}{\epsilon}
\newcommand{\vphi}{\varphi}
\newcommand{\supp}{\mbox{supp}\,}
\newcommand{\sgn}{\mbox{sgn}}
\newcommand{\T}{\mathcal{T}}
\newcommand{\Say}{\mathcal{S}_a^y}
\newcommand{\half}[1]{{#1}^{\frac{1}{2}}}
\newcommand{\mhalf}[1]{{#1}^{-\frac{1}{2}}}
\theoremstyle{plain}
\newtheorem{theorem}{Theorem}
\newtheorem*{theorem*}{Theorem}
\newtheorem*{thm}{Theorem \ref{sharp}}
\newtheorem{ppn}[theorem]{Proposition}
\newtheorem{corollary}[theorem]{Corollary}
\newtheorem{lem}[theorem]{Lemma}
\newtheorem{clm}[theorem]{Claim}
\theoremstyle{definition}
\newtheorem{case}{Case}[theorem]
\theoremstyle{remark}
\newtheorem{rem}{Remark}
\numberwithin{theorem}{section}
\numberwithin{equation}{section}
\title{Nonstandard estimates for a class of 1D dispersive equations and applications to linearized water waves}
\author{Jennifer Beichman%
\thanks{This work was partially supported by the Institute for Mathematics and its Applications and DMS 1101434}}
\affil{University of Wisconsin-Madison} 
\date{}
\begin{document}
\maketitle
\begin{abstract}
In this work, we obtain decay bounds for a class of 1D dispersive equations that includes the 
linearized water wave. These decay bounds display a surprising growth factor, which we show is sharp. The proofs rely on careful analysis of certain oscillatory integral operators.  In addition, these results 
have applications to the linearized water wave operator for low regularity data. 
\end{abstract}


\section{Introduction}

In this paper, we will explore the decay rate of a certain class of dispersive operators. The primary motivation is the work of  Wu \cite{Wu2009} on the almost global existence of solutions for the water wave equation. 
The key ingredients in \cite{Wu2009}  are an estimate on the dispersive decay rate of the linear water wave operator \begin{equation}\label{lwwWu} \partial^2_t -i\partial_\alpha\end{equation}
 and a change of both independent and dependent variables to transform out the quadratic nonlinearity in the full nonlinear water wave equation. The result in \cite{Wu2009}  roughly says that a 2D water wave with small initial height, energy, and slope in Sobolev space remains small and smooth almost globally in time. Intuitively, we expect for long time existence that we will only need the slope of the initial wave to be small, not the height and energy. In fact, Wu proved global existence for the 3D version of this problem with a smallness assumption only on the slope of the initial wave (cf. \cite{Wu2011}).  

The long time existence of the full water wave problem in two dimensions is a subject of much recent research. The works of Ionescu and Pusateri \cite{Ionescu2014} and Alazard and Delort \cite{ADelort2013, ADelort2013a} extend the result of \cite{Wu2009} to global existence by understanding further the nature of the cubic nonlinearity in the water wave equation. See 
  \cite{Ifrim2014} for a different proof.  All of these works assume data of similar types to that in \cite{Wu2009}; in particular, the issue of whether the assumption on the height and energy can be removed is not addressed. In this paper, we will examine the small height assumption and, in the process, prove some new linear decay estimates.  

 In the arguments of \cite{Wu2009}, the small height requirement comes from the control on the linear dispersive effects. These effects are controlled using a variant on Klainerman's method of invariant vector fields in this earlier work. In the case of the operator \eqref{lwwWu},  the invariant vector fields used are \[\Gamma = \left\{\partial_t, \partial_\alpha,L= \frac{t}{2}\partial_t+\alpha\partial_\alpha,\Omega_0=\alpha\partial_t +\frac{it}{2}\right\}.\] Using this set of vector fields, Wu derived a decay estimate of the $L^\infty$ norm of the solution in terms of the generalized $L^2$  Sobolev norms of the solution as well as the antiderivative of the solution. Notice that the final element here is, in fact, not a vector field in the traditional sense as it fails to satisfy a product rule. It is the use of this differential operator that leads to the reliance on the antiderivative, the source of the small height requirement.  To determine the necessity of the smallness assumption on the initial height, 
we need to understand the decay rates  for the solutions of the linearized water wave equation without relying on the control of the $L^2$ norm of the antiderivative of the solution at the initial time. 

From a physical perspective,  removing the dependence on the $L^2$ norm of the antiderivative of the solution at the initial time amounts to allowing more small frequency waves initially.\footnote{Notice that $\int f \in L^2$ if and only if $|\xi|^{-1}{\hat f}(\xi)\in L^2$, and thus limits the amount of small frequency present in the initial data.} Small frequency waves are a common presence in the ocean, it is important from scientific point of view the role of small frequency waves and its effects on large time behaviors of the solutions.

In this paper, we study the effect of various amounts of small frequency waves at the initial time to the decay rate of the solutions, and we show that by controlling only the $\dot H^s$, $s>0$ norm of the initial data \footnote{ Notice that for $f\in \dot H^s$, the Fourier transform $|\xi|^s\hat f(\xi)\in L^2$, which allows  the initial data to have $\hat u^(\xi, 0)\approx |\xi|^{-s-1/2}$  at $|\xi|\sim 0$.} the solution exhibits a surprising growth factor in time; furthermore, we show that this result is sharp. This factor identifies one of the possible growth mechanisms for solutions of the water wave equation. In addition, by balancing the study on the $\dot H^s\to L^\infty$ bound and the good decay provided by the estimate in \cite{Wu2009} for wave frequencies bounded away from the origin, we improve on both sets of decay estimates. Finally,  we give a precise relationship between singularities at the origin in the initial data and the rate of spatial decay of the solution to the linear water wave 
problem. The combination of these two results give us the precise threshold on the initial data for growth or decay in linearized estimates. 

In our proofs, we will not use the troublesome vector field $\Omega_0$. Our techniques are inspired by the work of Keel, Smith, and Sogge \cite{Keel2004}, where the authors studied the nonlinear wave equation in a domain with an obstacle by using a reduced set of vector fields. 

This is paper is organized as follows: in section 2, we will outline some preliminary results. In section 3, we will prove a set of dispersive estimates along integral curves of the scalar vetor field, which have a sharp growth factor coming from a certain collection of frequencies. These estimates naturally generalize to a  class of linear dispersive operators, so we present the results for the entire class of operator. Finally, in section 4, we will prove that for solutions to linearized water wave problem, a singularity in the initial data decreases the decay at spatial infinity. In this part, we only consider the linearized water wave operator for the sake of clarity. We begin this paper by laying the groundwork necessary to prove the new decay results in their full generality.  

\section{Preliminaries}
We consider the following general class of one dimensional dispersive differential equations. For $0<a$, $a\neq 1$, let $u(t,x)$ be a solution to the initial value problem

\begin{equation}\label{halfde}
 \left\{\begin{array}{l}
\partial_t u -i|D|^{a}u =0\\
u(0,x) = u_0(x).\\
\end{array}
\right.
\end{equation}
We define $|D|$ by the following Fourier transform:  \[|D| f := \int e^{-ix\xi} |\xi| \widehat{f}d\xi.\] Observe that $a=2$ gives the linear Schrodinger operator and $a=1/2$ is approximately one direction of propagation for the linearized two dimensional water wave equation. 

Following Klainerman, the set of vector fields we are interested in for equation \eqref{halfde} is $\{\partial_t, \partial_x, L = t\partial_t+(x/a)\partial_x , \Omega = x\partial_t +\frac{t}{2}\partial_x|D|^{-1} \}$,
with $L$ the scaling vector field and $\Omega$ the analog of $\Omega_0$ in \cite{Wu2009}. Inspired by the work of Keel, Smith and Sogge \cite{Keel,Keel2002,Keel2004}, our first step is to find a Sobolev type bound 
using the subset  $\Gamma = \{\partial_t, \partial_x, L = t\partial_t+(x/a)\partial_x\} $:

\begin{lem}\label{Sob}
For any $C^1(\R^+, \R)$ function $v(t,x)$ such that $v$ decays to zero as $|x|\rightarrow\infty$ and any parameter $y\in\R$, we have
\begin{equation}\label{Sobeqn}
\sup_{T\leq t\leq 2T}|v(t, yt^{1/a})|\leq \frac{C}{T^{1/2}}\sum_{k= 0}^1\left(\int_T^{2T}\left|L^kv (t, y t^{1/a})\right|^2dt\right)^{\frac{1}{2}}\end{equation}
\end{lem}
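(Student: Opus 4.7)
The heart of the matter is that the curve $x = yt^{1/a}$, for fixed $y$, is precisely an integral curve of $L$ in the sense that if we set $w(t) := v(t, yt^{1/a})$, then a direct chain rule calculation gives
\begin{equation*}
t\frac{dw}{dt}(t) = (Lv)(t, yt^{1/a}),
\end{equation*}
since $\frac{d}{dt}(yt^{1/a}) = \frac{yt^{1/a}}{at}$. Thus $L$ restricts along this curve to the scale-invariant derivative $td/dt$, which suggests changing to the logarithmic variable $s = \ln t$.

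Under $s = \ln t$, the interval $[T, 2T]$ maps to $[\ln T, \ln 2T]$, which has fixed length $\ln 2$ independent of $T$. Setting $W(s) := w(e^s)$, the identity above becomes $W'(s) = (Lv)(e^s, y e^{s/a})$. Now the standard one-dimensional Sobolev embedding $H^1 \hookrightarrow L^\infty$ on an interval of length $\ln 2$ yields
\begin{equation*}
\sup_{s \in [\ln T, \ln 2T]} |W(s)| \leq C\left(\int_{\ln T}^{\ln 2T} |W(s)|^2\,ds\right)^{1/2} + C\left(\int_{\ln T}^{\ln 2T} |W'(s)|^2\,ds\right)^{1/2},
\end{equation*}
with $C$ independent of $T$.

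Changing variables back via $ds = dt/t$ and noting that on $[T, 2T]$ we have $1/t \asymp 1/T$, each right-hand integral transforms into $\int_T^{2T} |L^k v(t, yt^{1/a})|^2 \, dt/t$, which is bounded above by $\frac{1}{T} \int_T^{2T} |L^k v(t, yt^{1/a})|^2\,dt$ for $k = 0, 1$. Taking square roots and pulling out the $T^{-1/2}$ factor yields exactly \eqref{Sobeqn}.

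The only nontrivial point is recognizing the scaling-invariant structure: the choice of the logarithmic time variable is dictated by $L$, and it is what turns the potentially $T$-dependent constant in Sobolev embedding into an absolute constant. The hypothesis that $v \to 0$ at spatial infinity plays no role in the compact-interval Sobolev embedding itself, so I would expect it to appear only implicitly in guaranteeing that all quantities make sense. No further estimate is needed; everything else is routine change of variables.
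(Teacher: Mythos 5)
Your proof is correct and captures the essential mechanism: the chain rule identity $t\,\frac{d}{dt}\bigl[v(t,yt^{1/a})\bigr] = (Lv)(t,yt^{1/a})$, i.e.\ $L$ restricts to the scale-invariant derivative $t\,d/dt$ along the trajectory $x=yt^{1/a}$, followed by a one-dimensional $L^\infty$--$H^1$ estimate on a dyadic time block. The paper's stated route is an ``averaged fundamental theorem of calculus,'' which proves the same 1D estimate directly on $[T,2T]$ (average $v(t)=v(t')+\int_{t'}^{t}\frac{1}{s}(Lv)\,ds$ over $t'\in[T,2T]$ and apply Cauchy--Schwarz, yielding exactly the $T^{-1/2}$ factor and the two $L^2$ norms); your passage to the logarithmic variable $s=\ln t$ and an application of Sobolev embedding on the fixed-length interval $[\ln T,\ln 2T]$ is a clean repackaging of the same computation rather than a genuinely different argument. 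Both methods produce the same bound, and your observation that the decay hypothesis on $v$ at spatial infinity is not needed for this lemma is correct -- it is carried along for the class of functions to which the lemma is ultimately applied.
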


The lemma follows from a straightforward application of an averaged fundamental theorem of calculus. We can then control the absolute value of a solution by 
by controlling solutions restricted to integral curves in $L^2$. The right hand side of this estimate is the focus of the bulk of the work. 

We begin by rewriting $u(t,yt^{1/a})$ as an operator on the initial data using Fourier transforms. Define the operator $\mathcal{S}_a^y$ as \[\mathcal{S}_a^y v(t) = \int e^{i(yt^{1/a}\xi +t|\xi|^a)}\widehat{v}(\xi)d\xi.\] Therefore, $u(t,yt^{1/a})= \Say u_0$. In practice, we will suppress the sub and superscript. We can rewrite the $L^2$ norm with respect to this new operator: 
\begin{align*}
\half{\left(\int_T^{2T} |u(t,yt^{1/a})|^2dt\right)} &\leq \sup_{g\in L^2, \|g\|=1}|\langle\mathcal{S} u_0(t)\varphi_T(t), g(t) \rangle|\\
&= \sup_{g\in L^2, \|g\|=1}|\langle \widehat{u_0}(\xi), \widehat{\mathcal{S}^*}(\varphi_T g)(\xi) \rangle|
\end{align*} where $\varphi(t)\in C^\infty_0(\R)$ with $\varphi =1$ for $t\in(1,2)$ and $\varphi = 0$ for $t\in (1/2, 5/2)^C$ and $\varphi_T(t) = \varphi(t/T)$. For simplicity, we will let $\T = \widehat{\mathcal{S}^*}$, so \[\T h(\xi) =\int e^{-i(yt^{1/a}\xi+t|\xi|^a)}h(t) dt.\] The proof of Proposition \ref{KEY} relies on a careful bound for the operator $\T$, found in Lemma \ref{Tthm}.

\section{Main Results}\label{sec:main}

Let $\Gamma = \{\partial_t, \partial_x, L = t\partial_t+(x/a)\partial_x\}$ and let $Lu_0 = Lu (0,x)$. 
\begin{theorem}\label{growth}
Let $u(t,x) = e^{it|D|^a}u_0(x)$ with $L^iu_0\in \dot{H}^{\frac{1-a}{2}}$ for $i=0,1$. Then, for any time $t>0$
\begin{equation}
|u(t, yt^{1/a})|\leq C (1+|y|^{\frac{-a}{4(1-a)}})\mhalf{t}\left(\|u_0\|_{\dot{H}^{\frac{1-a}{2}}}+\|Lu_0\|_{\dot{H}^{\frac{1-a}{2}}}\right). \end{equation}
\end{theorem}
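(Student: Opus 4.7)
The plan is to reduce the pointwise bound on $u$ along the integral curve $x = yt^{1/a}$ to an $L^2$ bound via Lemma \ref{Sob}, then to convert this by duality into a weighted $L^2$ estimate on the operator $\mathcal{T}$. For any fixed positive time, applying Lemma \ref{Sob} with $T$ equal to that time reduces the theorem to showing, for $k=0,1$,
\[
\left(\int_T^{2T} |L^k u(t, yt^{1/a})|^2 \, dt\right)^{1/2}
\leq C\bigl(1+|y|^{-a/(4(1-a))}\bigr)\, \|L^k u_0\|_{\dot H^{(1-a)/2}}.
\]
A direct commutator computation gives $[\partial_t-i|D|^a, L]=\partial_t-i|D|^a$, so if $u$ solves \eqref{halfde}, then so does $Lu$. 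Hence $L^k u(t,x)=e^{it|D|^a}(L^k u)|_{t=0}$ and $L^k u(t, yt^{1/a})=\mathcal{S}_a^y(L^k u|_{t=0})(t)$ for $k=0,1$.

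Next, applying the duality formulation developed in Section 2 with the cutoff $\varphi_T$ and pairing against the weight $|\xi|^{(1-a)/2}$ by Cauchy--Schwarz gives
\begin{align*}
\left(\int_T^{2T}|\mathcal{S}f(t)|^2\,dt\right)^{1/2}
&\leq \sup_{\|g\|_{L^2}=1}\bigl|\langle|\xi|^{(1-a)/2}\widehat{f},\,|\xi|^{(a-1)/2}\mathcal{T}(\varphi_T g)\rangle\bigr|\\
&\leq \|f\|_{\dot H^{(1-a)/2}}\sup_{\|g\|_{L^2}=1}\bigl\||\xi|^{(a-1)/2}\mathcal{T}(\varphi_T g)\bigr\|_{L^2}.
\end{align*}
The problem therefore reduces to a uniform-in-$T$ bound on $\mathcal{T}$ carrying the precise $|y|$-dependence.

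The main obstacle is to establish the operator bound
\[
\bigl\||\xi|^{(a-1)/2}\mathcal{T}(\varphi_T g)\bigr\|_{L^2(\xi)}
\leq C\bigl(1+|y|^{-a/(4(1-a))}\bigr)\|g\|_{L^2(t)},
\]
which is the content of Lemma \ref{Tthm} (cf. Proposition \ref{KEY}). The phase of $\mathcal{T}$ is $yt^{1/a}\xi+t|\xi|^a$, whose stationary point in $t$ satisfies $t^{1/a-1}\sim|\xi|^{a-1}/|y|$; for small $|y|$ this critical point leaves the support of $\varphi_T$ over a wide band of frequencies, weakening the standard stationary-phase decay. At the same time, since $0<a<1$ the weight $|\xi|^{(a-1)/2}$ is singular at the origin, exactly where the phase oscillates least. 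Balancing these two effects against each other will drive a careful dyadic frequency decomposition tied to $|y|$ and $T$, with the range of frequencies where neither $\xi$-integration by parts nor $t$-integration by parts applies trivially controlling the loss, and should produce the sharp growth factor $|y|^{-a/(4(1-a))}$.

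Granting this operator bound, I would apply it with $f=L^k u|_{t=0}$ for $k=0,1$, insert into the duality estimate, combine with Lemma \ref{Sob} at $T$ equal to the specified time, and sum over $k$ to conclude.
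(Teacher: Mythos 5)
Your reduction is exactly the paper's: Lemma \ref{Sob} to pass from the pointwise bound to a restricted $L^2$ bound, the commutation $[\partial_t-i|D|^a,L]=\partial_t-i|D|^a$ to treat $L^k u_0$ by the same estimate, duality and Cauchy--Schwarz with the weight $|\xi|^{(a-1)/2}$, and a final appeal to the operator bound of Lemma \ref{Tthm} (via Proposition \ref{KEY}) at $\sigma=(1-a)/2$, where the factor $(1+|y|^{a/(2(a-1))})^{1/2}$ produces the stated $(1+|y|^{-a/(4(1-a))})$. The one piece you leave as a black box --- Lemma \ref{Tthm} itself --- is likewise deferred in the paper's proof of this theorem, so your outline is faithful; the stationary-phase heuristic you offer for the $|y|$-loss correctly identifies the mechanism the paper exploits in Section \ref{mainT}.
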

This theorem is a special case of the following proposition combined with Lemma \ref{Sob}:
\begin{ppn}\label{KEY}
Let $u(t,x) = e^{it|D|^a}u_0(x)$ with $L^iu_0\in \dot{H}^{\frac{1-a}{2}}$ for $i=0,1$. Then we have the following restricted $L^2$ bounds:
\begin{enumerate}
\item For $0< a$ and $a\neq 1$ with \[\sigma\in\left\{\begin{array}{ll}
                                                   \left(0,\frac{1-a}{2}\right]& \text{ for }0<a<1\\
						   \left[\frac{1-a}{2},0\right) & \text{ for } 1<a,
                                                      \end{array}\right.\] 
\begin{equation}\label{hom}
\int_T^{2T} |u(t,yt^{1/a})|^2dt \leq C|y|^{\frac{1-a-2\sigma}{a-1}}T^{\frac{a-1+2\sigma}{a}}\left(1 + y^{\frac{a}{2(a-1)}}\right) \|u_0\|^2_{\dot{H}^\sigma}. 
\end{equation}
\item For $0<a<1$ and $\frac{1-a}{2}\leq\sigma<1/2$,
 \begin{equation}\label{inhom}
\int_T^{2T} |u(t,yt^{1/a})|^2dt \leq C|y|^{\frac{1-a-2\sigma}{a-1}}T^{\frac{a-1+2\sigma}{a}}\left(1 + y^{\frac{a}{2(a-1)}}\right) \|u_0\|^2_{H^\sigma}.                                                      
                                                     \end{equation}

\end{enumerate}
\end{ppn}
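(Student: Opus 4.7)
The plan is to carry out the duality reduction prepared in the preamble. Writing $u(t,yt^{1/a})=\Say u_0(t)$ and inserting the smooth cutoff $\varphi_T$, one has
\[
\left(\int_T^{2T}|u(t,yt^{1/a})|^2\,dt\right)^{1/2} \leq \sup_{\|g\|_{L^2}=1}\bigl|\langle\widehat{u_0},\T(\varphi_T g)\rangle\bigr|.
\]
Pairing $|\xi|^{\sigma}\widehat{u_0}$ with $|\xi|^{-\sigma}\T(\varphi_T g)$ by Cauchy--Schwarz, part (1) reduces to the weighted operator bound
\[
\bigl\||\xi|^{-\sigma}\T(\varphi_T g)\bigr\|_{L^2_\xi}^2 \leq C|y|^{\frac{1-a-2\sigma}{a-1}}T^{\frac{a-1+2\sigma}{a}}\bigl(1+|y|^{\frac{a}{2(a-1)}}\bigr)\|g\|_{L^2_t}^2,
\]
which is the content of Lemma~\ref{Tthm}. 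Part (2) follows from (1) applied on $|\xi|\lesssim 1$ together with a routine high-frequency bound: on $|\xi|\gtrsim 1$ the weight $\langle\xi\rangle^{-\sigma}$ is bounded and the restriction $\sigma<1/2$ is exactly what is needed to close the argument via a single integration by parts in~$t$.

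To establish the operator bound I would begin by analyzing the phase $\phi(t,\xi)=-yt^{1/a}\xi-t|\xi|^a$. Its derivative $\phi_t=-\frac{y}{a}t^{(1-a)/a}\xi-|\xi|^a$ vanishes in $[T,2T]$ only when $\sgn(y\xi)=-\sgn(1-a)$, at a unique stationary time $t_*(\xi)\sim|y|^{-a/(1-a)}|\xi|^{-a}$. The requirement $t_*(\xi)\in[T,2T]$ isolates a narrow band of frequencies around $|\xi|_0:=T^{-1/a}|y|^{-1/(1-a)}$; outside this band $|\phi_t|\gtrsim\max(|\xi|^a,|y\xi|T^{(1-a)/a})$. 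On the non-stationary region a single integration by parts in~$t$ against $(i\phi_t)^{-1}\partial_t$ produces pointwise decay of $\T(\varphi_T g)(\xi)$ that is easily integrated against $|\xi|^{-2\sigma}$. The stationary band is treated by a $TT^*$/Schur argument on the kernel
\[
K(t,t')=\int|\xi|^{-2\sigma}\,e^{i[\phi(t,\xi)-\phi(t',\xi)]}\,d\xi,
\]
and the change of variables $\xi\mapsto t_*(\xi)$, whose Jacobian is of order $|y|^{a/(1-a)}|\xi|^{a+1}$, converts the weighted area element on the band into exactly the main-term exponents $T^{(a-1+2\sigma)/a}|y|^{(1-a-2\sigma)/(a-1)}$.

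The hardest point, and the entire novelty of the estimate, is isolating the growth factor $1+|y|^{a/(2(a-1))}$. For $0<a<1$ this exponent is negative and the factor blows up as $|y|\to 0^{+}$; it records a resonance between the low-frequency regime $|\xi|\ll|\xi|_0$, where $|\phi_t|$ is only as large as $|\xi|^a$, and the singular weight $|\xi|^{-2\sigma}$ coming from the $\dot H^\sigma$ norm. A crude dispersive bound on this region would lose the sharp scaling; the fix is to decompose dyadically in $|\xi|$ below $|\xi|_0$, control each dyadic piece either by another integration by parts or by a local stationary-phase expansion, and sum the resulting geometric series against the weight. The stated $\sigma$-endpoints in each case are the borderline ones at which this sum barely converges, which is why the two cases $0<a<1$ and $1<a$ must be treated separately and why the exponent in the growth factor switches sign between them.
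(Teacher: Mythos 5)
Your duality reduction to the weighted operator bound and the $TT^*$ kernel $K(t,t')=\int\omega(\xi)\,e^{i[\phi(t,\xi)-\phi(t',\xi)]}\,d\xi$ match the paper, but the plan for estimating the pieces has a genuine gap. You propose to handle the non-stationary region by a single integration by parts in $t$ applied directly to $\T(\varphi_T g)(\xi)$. Since $g$ is an arbitrary $L^2$ function carrying no derivatives, this necessarily differentiates the amplitude $g\varphi_T$ and is not admissible: there is no van der Corput--type gain against $L^2$ data in the non-oscillating variable. The paper avoids this by passing to the bilinear form $\int\omega(\xi)|\T(g\varphi_T)(\xi)|^2\,d\xi=\iint g\varphi_T(t)\,\overline{g\varphi_T(s)}\,K(t,s)\,dt\,ds$ \emph{globally} (not just on the stationary band), and then doing all the oscillatory analysis in $\xi$ inside the kernel $K(t,s)=\int\omega(\xi)e^{-i\Psi(\xi)}\,d\xi$, where $\Psi(\xi)=y(t^{1/a}-s^{1/a})\xi+(t-s)|\xi|^a$. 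Integration by parts in $\xi$ away from the critical point $\xi_0=-(yf(t,s)/a)^{1/(a-1)}$ produces Calder\'on--Zygmund kernels in $(t,s)$ (controlled via Propositions~\ref{noxik} and~\ref{parxik}), while the near-critical band is handled via the Hardy--Littlewood--Sobolev lemma. The $t$-integration-by-parts step should be replaced entirely by this kernel analysis in $\xi$.

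Two secondary issues. Your heuristic attributes the growth factor $|y|^{a/(2(a-1))}$ to a resonance with the singular weight at $|\xi|\ll|\xi_0|$; in the paper it in fact arises from the near-critical band $|\xi|\sim|\xi_0|$ (the term $K_1$ on $(\gamma(a)\xi_0,\xi_0/2)$, bounded by HLS), which for $0<a<1$ and small $|y|$ is a \emph{high}-frequency scale, and for $\sigma\leq(1-a)/2$ the contributions from $|\xi|\ll|\xi_0|$ are summable and dominated by the top scale, so the singularity of the weight at the origin is not the mechanism. Also, deriving part~(2) from part~(1) on $|\xi|\lesssim1$ plus a high-frequency bound is not straightforward: the admissible $\sigma$-ranges in (1) and (2) meet only at $\sigma=(1-a)/2$, and the $\sigma$-dependent exponents of $y$ and $T$ in (2) cannot be inherited from (1) in this way. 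The paper proves (2) directly with the weight $(1+\xi^2)^{-\sigma}$, and the constraint $\sigma<1/2$ arises from integrability of the kernel near $\xi=0$ in the region $(\xi_0/2,0)$, again not from any $t$-integration by parts.
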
 

In the results above, there is a growth factor in $y$. In the following optimality result, we show it is not possible to remove the growth factor without slowing down the rate of decay.
\begin{theorem}\label{sharp}
Choose initial data $u_0$ such that $\widehat{u}_0(\xi) = |\xi|^{a-1}\widehat{g\varphi}(y\xi+|\xi|^a)$ with $\|g\|_{L^2}=1$ and $\varphi\in C_0^\infty(\R)$ such that $0\leq \varphi$ and $\varphi^2(t)\leq \chi_{[T,2T]}(t)$. Then, 
\begin{equation*}
\left(\int_T^{2T}|u(t,yt)|^2dt\right)^{1/2}\geq C'(a)|y|^{a/4(a-1)}C(g)\|u_0\|_{\dot{H}^{(1-a)/2}} 
\end{equation*} where the constant $C(g)$ is explicitly \[C(g)^2 =\begin{cases} 
\displaystyle{\int}^{(a-1)|y/a|^{a/(a-1)}}_0 |\zeta|^{-1/2}|\widehat{g\varphi}(\zeta-(a-1)|y/a|^{a/(a-1)})|^2d\zeta&1<a\\
\displaystyle{\int}_{(a-1)|y/a|^{a/(a-1)}}^0 |\zeta|^{-1/2}|\widehat{g\varphi}(\zeta+(1-a)|y/a|^{a/(a-1)})|^2d\zeta&0<a<1\end{cases}.\] \end{theorem}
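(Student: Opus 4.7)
The plan is to obtain the lower bound by dualizing the $L^2$ norm against a test function whose Fourier transform is precisely matched to the special form of $\widehat{u}_0$, and then to extract the growth factor from the geometry of the phase $\phi(\xi):=y\xi+|\xi|^a$ near its critical point. Since $\varphi$ is supported in $[T,2T]$ by hypothesis, I would test $u(\cdot,y\cdot)$ against $g\varphi$ to obtain
\begin{equation*}
\left(\int_T^{2T}|u(t,yt)|^2\,dt\right)^{1/2}\geq \frac{|\langle u(\cdot,y\cdot),\,g\varphi\rangle|}{\|g\varphi\|_{L^2}}.
\end{equation*}
Writing $u(t,yt)=\int e^{it\phi(\xi)}|\xi|^{a-1}\widehat{g\varphi}(\phi(\xi))\,d\xi$, applying Fubini, and using the identity $\int e^{it\phi(\xi)}\overline{g\varphi(t)}\,dt=\overline{\widehat{g\varphi}(\phi(\xi))}$, the inner product collapses to
\begin{equation*}
\langle u(\cdot,y\cdot),\,g\varphi\rangle=\int|\xi|^{a-1}|\widehat{g\varphi}(\phi(\xi))|^2\,d\xi=\|u_0\|_{\dot H^{(1-a)/2}}^2.
\end{equation*}
Since $\|g\varphi\|_{L^2}\leq\|g\|_{L^2}=1$, the theorem reduces to the sharper comparison $\|u_0\|_{\dot H^{(1-a)/2}}^{2}\geq C'(a)^2|y|^{a/(2(a-1))}\,C(g)^2$.

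Next, I would extract the growth factor from the singularity of the Jacobian at the critical point $\xi_0$ of $\phi$. From $\phi'(\xi_0)=0$ one has $|\xi_0|^{a-1}=|y|/a$, so that $|\xi_0|=|y/a|^{1/(a-1)}$, and direct computation yields $\phi(\xi_0)=\zeta_0=(1-a)|y/a|^{a/(a-1)}$ together with $|\phi''(\xi_0)|=a|a-1|\cdot|y/a|^{(a-2)/(a-1)}$. I would restrict $\int|\xi|^{a-1}|\widehat{g\varphi}(\phi(\xi))|^2\,d\xi$ to the two monotone branches $I_2,I_3$ of $\phi$ flanking $\xi_0$; on each, changing variables $\zeta=\phi(\xi)$ and summing, the Morse expansion $\zeta-\zeta_0\sim\tfrac12\phi''(\xi_0)(\xi-\xi_0)^2$ gives
\begin{equation*}
\frac{|\xi_I(\zeta)|^{a-1}}{|\phi'(\xi_I)|}+\frac{|\xi_{II}(\zeta)|^{a-1}}{|\phi'(\xi_{II})|}\;\geq\;\frac{c(a)\,|\xi_0|^{a-1}}{\sqrt{|\phi''(\xi_0)|\,|\zeta-\zeta_0|}}=\frac{c(a)\,|y/a|^{a/(2(a-1))}}{\sqrt{a|a-1|\,|\zeta-\zeta_0|}}
\end{equation*}
for $\zeta$ in the shared image interval $J$, which is $(0,\zeta_0)$ when $0<a<1$ and $(\zeta_0,0)$ when $1<a$; this distinction, which comes from the sign of $\zeta_0$, is exactly what produces the two formulations of $C(g)^2$ in the statement. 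Integrating this pointwise bound against $|\widehat{g\varphi}(\zeta)|^2$ over $J$ and applying the shift $\eta=\zeta-\zeta_0$ built into the definition of $C(g)$ yields precisely $c(a)^2\,|y|^{a/(2(a-1))}\,C(g)^2$, completing the reduction.

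The main technical obstacle is to promote the Jacobian estimate, which follows immediately from Taylor expansion near $\xi_0$, to a \emph{uniform} lower bound over the entire interval $J$, so that the resulting integral genuinely recovers $C(g)^2$ rather than merely its contribution near the singularity. At the far endpoint of $J$ one branch tends to $\xi=0$ while the other tends to the nontrivial zero $|y|^{1/(a-1)}$ of $\phi$; there a direct calculation reduces the left-hand side to the dimensionless sum $\tfrac1a+\tfrac1{|1-a|}$, and the right-hand side factor $|y|^{a/(2(a-1))}/\sqrt{|\zeta-\zeta_0|}$ likewise collapses to an explicit constant in $a$, so the endpoint inequality is a pure comparison of $a$-dependent constants which dictates the admissible value of $C'(a)$. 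Intermediate $\zeta$ are then handled by monitoring the exact expressions $\xi^{a-1}/|\phi'(\xi)|$ along each branch, and the whole bookkeeping must be run in parallel for the two sign cases $0<a<1$ and $1<a$; this is elementary but it is the step that needs to be done carefully in the actual proof.
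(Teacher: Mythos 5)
Your reduction is essentially the paper's: dualize, use the special form of $\widehat{u}_0$ so that $\langle u(\cdot,y\cdot),g\varphi\rangle=\int|\xi|^{a-1}|\widehat{g\varphi}(\phi(\xi))|^2\,d\xi=\|u_0\|_{\dot H^{(1-a)/2}}^2$, and reduce to $\|u_0\|_{\dot H^{(1-a)/2}}^2\geq C'(a)^2|y|^{a/(2(a-1))}C(g)^2$, then extract the growth factor from the degenerate critical point of $\phi$ on $(-y^{1/(a-1)},0)$. The only cosmetic difference is the choice of coordinates: you substitute $\zeta=\phi(\xi)$ on each monotone branch and lower-bound $|\xi_I|^{a-1}/|\phi'(\xi_I)|+|\xi_{II}|^{a-1}/|\phi'(\xi_{II})|$, whereas the paper first introduces $\eta$ so that $\phi(\xi)+\Xi_1=\pm(\eta-\xi_1)^2$ exactly, reducing to a single Jacobian factor $J$ or $\mathcal J$. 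These are the same thing under $|\xi|^{a-1}/|\phi'(\xi)|=|\mathcal J(\xi)|/(2\sqrt{|\zeta-\zeta_0|})$.

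The step you flag as ``elementary bookkeeping'' is in fact where most of the work lives, and as written your sketch does not close it. Promoting the local Morse estimate to a bound that holds uniformly across the entire preimage interval is not a matter of checking endpoints and ``monitoring'' the middle: one has to rule out an interior dip of the Jacobian below the asserted $c(a)|y|^{a/(2(a-1))}/\sqrt{|\zeta-\zeta_0|}$ profile. The paper does this in Appendix Lemma \ref{Jacob} by proving $J$ (resp.~$\mathcal J$) is \emph{monotone} on $(-y^{1/(a-1)},0)$, which rests on a nontrivial sign analysis of the numerator $N(\xi)$ of the derivative, using the identity $|\xi|N'+(1-a)N=(1-2a)|\xi|^a(y|\xi|^{1-a}-a)^2$ together with Rolle's theorem and an examination of the order of vanishing of $N$ at $\xi_1$. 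Without some argument of this kind --- monotonicity, convexity, or an explicit comparison --- the uniform pointwise bound is an unproved claim, and it is precisely the claim that makes $C(g)^2$ (rather than merely a local piece of it) appear. Separately, a small slip: for $a>1$ the endpoint branch $\xi\to0^-$ contributes $0$, not $1/a$, since $|\xi|^{a-1}\to0$, so the endpoint constant is $1/(a-1)$ rather than $1/a+1/|1-a|$; this does not change the structure of the argument since one still gets an $a$-dependent constant.
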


We can remove the factor of $y$ in the Theorem \ref{growth} above but at the cost of a slower decay rate. One such result is the following:
\begin{ppn} \label{inftydecay}
\begin{enumerate}
 \item When $0<a<1$, $u(t,x) = e^{it|D|^a}u_0(x)$ with $L^iu_0\in H^{r}$ with $r=\max\{\frac{1-a}{2}, \frac{2-a}{4}\}$ for $i=0,1$ satisfies the following $L^\infty$ bound:
\[\sup_{y}|u(t, yt^{1/a})|\leq C(t^{-1/4}+t^{-1/2})\sum_{|k|\leq 1}\left(\|L^k u_0\|_{
H^{\frac{1-a}{2}}}+\|L^k u_0\|_{H^{\frac{2-a}{4}}}\right).\]
\item When $2\leq a$, $u(t,x) = e^{it|D|^a}u_0(x)$ with $L^iu_0\in \dot{H}^{r}$ with $r=\max\{\frac{1-a}{2}, \frac{2-a}{4}\}$ for $i=0,1$ satisfies the following $L^\infty$ bound:
\[\sup_y |u(t, yt^{1/a})|\leq C(t^{-1/4}+t^{-1/2})\sum_{|k|\leq 1}\left(\|L^k u_0\|_{\dot{H}^{\frac{1-a}{2}}}+\|L^k u_0\|_{\dot{H}^{\frac{2-a}{4}}}\right).\]
\end{enumerate}
\end{ppn}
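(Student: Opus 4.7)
The plan is to combine Lemma \ref{Sob} with two applications of Proposition \ref{KEY} at different Sobolev levels, $\sigma_1=(1-a)/2$ and $\sigma_2=(2-a)/4$, and to use the resulting complementary estimates on disjoint ranges of $y$.

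First, by Lemma \ref{Sob} it suffices to control $T^{-1/2}(\int_T^{2T}|L^k u(t,yt^{1/a})|^2\,dt)^{1/2}$ for $k=0,1$ uniformly in $y$. A direct computation using the scaling property of $|D|^a$ gives $[\partial_t-i|D|^a, L] = -(\partial_t-i|D|^a)$, so $Lu$ solves the same dispersive equation with initial datum $Lu_0=Lu(0,\cdot)$, and Proposition \ref{KEY} applies to each $L^k u$ with the norm of $L^k u_0$ on the right.

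Next I compute the two specialized bounds. The choice $\sigma_1=(1-a)/2$ makes both factors $y^{(1-a-2\sigma)/(a-1)}$ and $T^{(a-1+2\sigma)/a}$ trivial, yielding, via Lemma \ref{Sob},
\begin{equation*}
|u(t,yt^{1/a})| \leq CT^{-1/2}\bigl(1+|y|^{a/(4(a-1))}\bigr)\bigl(\|u_0\|_{\dot H^{\sigma_1}}+\|Lu_0\|_{\dot H^{\sigma_1}}\bigr).
\end{equation*}
The spurious factor $|y|^{a/(4(a-1))}$ blows up as $y\to 0$ when $0<a<1$ and as $|y|\to\infty$ when $a>1$. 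The second choice $\sigma_2=(2-a)/4$ is designed precisely so that $y^{(1-a-2\sigma)/(a-1)}\cdot y^{a/(2(a-1))}$ is $y$-independent, since $(1-a-2\sigma_2)/(a-1)+a/(2(a-1))=(2-a-4\sigma_2)/(2(a-1))=0$; the $T$-exponent then equals $1/2$, which, combined with Lemma \ref{Sob}, produces
\begin{equation*}
|u(t,yt^{1/a})| \leq CT^{-1/4}\bigl(1+|y|^{-a/(4(a-1))}\bigr)\bigl(\|u_0\|_{X^{\sigma_2}}+\|Lu_0\|_{X^{\sigma_2}}\bigr),
\end{equation*}
with $X^{\sigma_2}=H^{\sigma_2}$ when $0<a<1$ (since $(1-a)/2\leq\sigma_2<1/2$ places us in the inhomogeneous range of Proposition \ref{KEY}(2)) and $X^{\sigma_2}=\dot H^{\sigma_2}$ when $a\geq 2$ (since $\sigma_2\leq 0$, in the homogeneous range of Proposition \ref{KEY}(1)).

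Finally I split the $y$-range. For $0<a<1$ the exponent $a/(4(a-1))$ is negative; I use the $\sigma_1$ bound on $|y|\geq 1$ and the $\sigma_2$ bound on $|y|<1$, each then bounded by $C(T^{-1/4}+T^{-1/2})$ times the stated norms. For $a\geq 2$ the signs reverse and the roles of the two regions swap. Taking the supremum over $y$ and summing over $k=0,1$ gives the claimed $L^\infty$ estimate. The main obstacle is the routine but error-prone algebraic bookkeeping certifying that $\sigma_2$ falls inside the admissible ranges of Proposition \ref{KEY}; the boundary case $a=2$ (where $\sigma_2=0$ sits on the edge of $[(1-a)/2,0)$) requires an auxiliary argument, either a limiting version of Proposition \ref{KEY}(1) or absorption into the $\sigma_1$ estimate, since at $a=2$ the factor $1+|y|^{a/(4(a-1))}=1+|y|^{1/2}$ is harmless on bounded $y$ and can be traded for an arbitrarily small loss in regularity.
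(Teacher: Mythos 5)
Your argument is correct and follows essentially the same route as the paper: reduce to restricted $L^2$ bounds via Lemma \ref{Sob}, then apply Proposition \ref{KEY} at the two levels $\sigma_1=(1-a)/2$ and $\sigma_2=(2-a)/4$ on complementary ranges of $|y|$ (you split at $|y|=1$; the paper splits at the $T$-dependent scale $|y|=T^{(a-1)/a}$, and both work since the $y$-dependent prefactors in Proposition \ref{KEY} are bounded on the respective regions). Your observation about the boundary case $a=2$ — where $\sigma_2=0$ falls outside the admissible range $[(1-a)/2,0)$ of Proposition \ref{KEY}(1) — flags a genuine subtlety that the paper's own proof, which only gives full details for $0<a<1$ and asserts the $a\geq 2$ case is analogous, does not address; the remedy you sketch would need to be filled in.
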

This proposition follows from an invariant vector field Sobolev bound (Lemma \ref{Sob}) and Proposition \ref{KEY} for appropriate choices of $\sigma$.  Then, the proofs of Theorem \ref{growth} and Proposition \ref{inftydecay} are straightforward applications of Lemma \ref{Sob} and Proposition \ref{KEY}. 

\subsection{Proof details}\label{dtls}

Most of the detailed work is in the proof of Proposition \ref{KEY} which has several steps. 
 
First, we present the proofs of Theorem \ref{growth} and Proposition \ref{inftydecay} assuming Proposition \ref{KEY}, followed by the longer proof of Proposition \ref{KEY}. 

\subsubsection{Proof of Theorem \ref{growth} and Proposition \ref{inftydecay}}\label{pf1}

For these two short proofs, we will assume Proposition \ref{KEY}.
\begin{proof}[Proof of Theorem \ref{growth}]
Let $\mathcal{S}$ and $\T$ be the operators described above. Then, 
\begin{align}
\left(\int_T^{2T}\right.&\left.\left|u (t, y t^{1/a})\right|^2dt\right)^{\frac{1}{2}} \leq\half{\left(\int_T^{2T} \left|\mathcal{S}u_0(t)\varphi_T(t)\right|^2dt\right)}\nonumber\\
&=\sup_{\|g\|_{L^2}=1}\left|\langle \widehat{u}_0, \T\left(g\varphi_T\right)\rangle\right|\nonumber\\
&\leq\|u_0\|_{\dot{H}^{\frac{1-a}{2}}}\sup_{\|g\|_{L^2}=1}\half{\left(\int\left|\int e^{-iy\tau^{1/a} \xi-i\tau|\xi|^a} g(\tau)\varphi_T(t)d\tau\right|^2 |\xi|^{a-1}d\xi\right)}\label{ybe}.\end{align}
Then by Lemma \ref{Tthm}, 
\begin{align*} 
\sup_{\|g\|_{L^2}=1}\half{\left(\int\left|\int e^{-iy\tau^{1/a} \xi-i\tau|\xi|^a} g(\tau)\varphi_T(t)d\tau\right|^2 |\xi|^{a-1}d\xi\right)}&\leq\sup_{\|g\|_{L^2}=1} \half{\left(1+y^{\frac{a}{2(a-1)}}\right)}\|g\|_{L^2}\\
 &\leq C(a)  \half{\left(1+y^{\frac{a}{2(a-1)}} \right)}.
\end{align*}
Thus, 
\[\left(\int_T^{2T}\left|u (t, y t^{1/a})\right|^2dt\right)^{\frac{1}{2}} \leq C(a)  \half{\left(1+y^{\frac{a}{2(a-1)}} \right)} \|u_0\|_{\dot{H}^{\frac{1-a}{2}}}. \]
\end{proof}

In order to get the slower decay rate, we use a similar argument, but we use a linear combination of results from Proposition \ref{KEY} with $\sigma = \frac{1-a}{2}$, and $ \sigma= \frac{2-a}{4}$.   
\begin{proof}[Proof of Proposition \ref{inftydecay}]
Recall from Proposition \ref{KEY},  for $0<a<1$ and $\frac{1-a}{2}\leq\sigma<1/2$, \[\int_T^{2T} |u(t,yt^{1/a})|^2dt \leq C\left(|y|^{\frac{1-a-2\sigma}{a-1}}T^{\frac{a-1+2\sigma}{a}} + |y|^{\frac{2-a-4\sigma}{2(a-1)}}T^{\frac{a-1+2\sigma}{a}}\right) \|u_0\|^2_{H^\sigma}.\]
We want to choose discrete $\sigma$ so that we can control the right hand side independent of $y$. Notice that 
 \[|y|^{\frac{1}{a-1}} T^{-1/a}<1\Rightarrow |y|^{\frac{1}{a-1}}<T^{1/a}\Rightarrow |y|^{\frac{a}{2(a-1)}}<T^{1/2}.\] So when $|y|^{\frac{1}{a-1}} T^{-1/a}<1$, if we take $\sigma = \frac{1-a}{2}$, we have:
\[\int_T^{2T} |u(t,yt^{1/a})|^2dt \leq C\left(1 + T^{1/2}\right) \|u_0\|^2_{H^{\frac{1-a}{2}}}.\]
On the other hand, if 
\[|y|^{\frac{1}{a-1}} T^{-1/a}>1 \Rightarrow |y|^{-\frac{a}{2(a-1)}}T^{1/2}<1.\] If we take $\sigma = \frac{2-a}{4}$, then $|y|^{\frac{1-a-2\sigma}{a-1}}T^{\frac{a-1+2\sigma}{a}} = |y|^{-\frac{a}{2(a-1)}}T^{1/2} $ and we get
 \[\int_T^{2T} |u(t,yt^{1/a})|^2dt \leq C\left(1 + T^{1/2}\right) \|u_0\|^2_{H^{\frac{2-a}{4}}}.\] Therefore, we can conclude that 
\[\int_T^{2T} |u(t,yt^{1/a})|^2dt \leq C\left(1 + T^{1/2}\right)\left(\|u_0\|^2_{H^{\frac{1-a}{2}}}+ \|u_0\|^2_{H^{\frac{2-a}{4}}}\right).\] By combining this estimate and the Sobolev estimate from Lemma \ref{Sob}, we get the desired $L^\infty$ bound. 
\end{proof}

\subsubsection{Reduction to Lemma \ref{Tthm}}\label{pf2}

The proofs of Theorem \ref{growth} and Proposition \ref{inftydecay} rely on Proposition \ref{KEY}, which in turn follows from a proposition on the Fourier transform of the dual operator $\mathcal{S}^*$. Recall the operator $\mathcal{S}$:
\[\mathcal{S} v(t) = \int e^{i(yt^{1/a}\xi +t|\xi|^a)}\widehat{v}(\xi)d\xi.\]  
Then, 
\begin{align}\label{Tcalc}
\int_T^{2T} |u(t,yt^{1/a})|^2dt &\leq \sup_{g\in L^2, \|g\|=1}|\langle\mathcal{S} u_0(t)\varphi_T(t), g(t) \rangle|\nonumber\\
&= \sup_{g\in L^2, \|g\|=1}|\langle \widehat{u_0}(\xi), \T(\varphi_T g)(\xi) \rangle|\\
&= \sup_{g\in L^2, \|g\|=1}\half{\left(\int \frac{1}{\omega(\xi)}\left|\widehat{u_0}(\xi)\right|^2d\xi\right)} \half{\left(\int \omega(\xi)|\T(\varphi_T g)(\xi) |^2d\xi\right)}\nonumber
\end{align} where $\omega(\xi)$ is a weight function such as $|\xi|^{-2\sigma}$ or $(1+\xi^2)^{-\sigma}$.

Thus, we have reduced the proof of Proposition \ref{KEY} to showing the weighted estimate contained in the following lemma.
\begin{lem}\label{Tthm}
Let $g\in L^2(\R)$ and $\T$ and $\varphi_T$ be as above. 
\begin{enumerate}
\item For $0< a$ and $a\neq 1$ with \[\sigma\in\left\{\begin{array}{ll}
                                                   \left(0,\frac{1-a}{2}\right]& \text{ for }0<a<1\\
						   \left[\frac{1-a}{2},0\right) & \text{ for } 1<a,
                                                      \end{array}\right.\] 
\begin{equation}\label{homT}
\int |\xi|^{-2\sigma} |\T(g\varphi_T)(\xi)|^2d\xi \leq C |y|^{\frac{1-a-2\sigma}{a-1}}T^{\frac{a-1+2\sigma}{a}}\left(1 + y^{\frac{a}{2(a-1)}}\right)\|g\|_{L^2}^2. 
\end{equation}

\item For $0<a<1$ and $\frac{1-a}{2}\leq\sigma<1/2$, 
\begin{equation}\label{inhomT}
\int (1+\xi^2)^{-\sigma} |\T(g\varphi_T)(\xi)|^2d\xi \leq C|y|^{\frac{1-a-2\sigma}{a-1}}T^{\frac{a-1+2\sigma}{a}}\left(1 + y^{\frac{a}{2(a-1)}}\right)\|g\|_{L^2}^2. 
\end{equation}

\end{enumerate}
\end{lem}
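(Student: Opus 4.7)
The plan is to apply the $TT^*$/Schur method, then reduce the kernel estimate to a universal oscillatory integral bound via a critical rescaling. Setting $T_\omega f(\xi):=\sqrt{\omega(\xi)}\,\T(\varphi_T f)(\xi)$, the left-hand side equals $\|T_\omega g\|_{L^2}^2=\langle g,T_\omega^*T_\omega g\rangle$, and the operator $T_\omega^*T_\omega$ has kernel
\[
K(t,s)=\varphi_T(t)\varphi_T(s)\int\omega(\xi)\,e^{iy(t^{1/a}-s^{1/a})\xi+i(t-s)|\xi|^a}\,d\xi.
\]
Since $K(s,t)=\overline{K(t,s)}$, Schur's test reduces the lemma to bounding $\sup_{t\in[T,2T]}\int|K(t,s)|\,ds$ by the right-hand side.

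Fix $t\in[T,2T]$, write $\tau=t-s$, and set $\lambda:=(|y|/a)^{1/(a-1)}T^{-1/a}$; this is the magnitude of the critical point in $\xi$ of the phase $y(t^{1/a}-s^{1/a})\xi+\tau|\xi|^a$, and $\lambda$ is essentially independent of $\tau$ because $(t^{1/a}-s^{1/a})/\tau\sim T^{1/a-1}/a$ on the range $s\in[T/2,5T/2]$, by the mean-value theorem. Rescaling $\xi=\lambda\eta$ in the inner integral and $\mu:=|y|^{a/(a-1)}a^{-a/(a-1)}T^{-1}\tau$ in the outer, a direct computation transforms the phase to $\mu(c_{t,s}\sgn(y)\eta+|\eta|^a)$ with $c_{t,s}$ confined to an interval $[c_1(a),c_2(a)]$ depending only on $a$, and yields
\[
\int|K(t,s)|\,ds=C(a)\,|y|^{(1-a-2\sigma)/(a-1)}T^{(a-1+2\sigma)/a}\int_{|\mu|\leq M}|F_{t,s}(\mu)|\,d\mu,
\]
where $M\sim|y|^{a/(a-1)}$ and $F_{t,s}(\mu):=\int|\eta|^{-2\sigma}e^{i\mu(c_{t,s}\sgn(y)\eta+|\eta|^a)}\,d\eta$. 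Since $M^{1/2}\sim|y|^{a/(2(a-1))}$, the matter reduces to proving the universal bound $\int_{-M}^{M}|F_{t,s}(\mu)|\,d\mu\leq C(a)(1+M^{1/2})$ uniformly in $c_{t,s}$ in its bounded range.

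The estimate on $F$ splits into two regimes. For $|\mu|\gtrsim 1$ the rescaled phase has a nondegenerate critical point at $|\eta_c|\sim 1$ with second derivative of order $|\mu|$; van der Corput's second-derivative lemma, combined with integration by parts in the non-stationary tails $|\eta|\ll 1$ and $|\eta|\gg 1$, yields $|F_{t,s}(\mu)|\lesssim|\mu|^{-1/2}$, which integrates over $1\leq|\mu|\leq M$ to $O(M^{1/2})$. For $|\mu|\lesssim 1$ the phase is small on a compact $\eta$-range and oscillates at large $|\eta|$; integrating by parts beyond the scale where the phase first becomes large ($|\eta|\sim|\mu|^{-1}$ for $0<a<1$, $|\eta|\sim|\mu|^{-1/a}$ for $a>1$) gives a bound on $F$ by a power of $|\mu|$ that is integrable at the origin in the allowed range of $\sigma$, contributing $O(1)$. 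Summing the two regimes gives the claimed inequality.

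The main obstacle will be controlling the non-stationary tails of $F$ uniformly in the sign of $\sigma$: when $0<a<1$ the weight $|\eta|^{-2\sigma}$ is singular at $\eta=0$ and the small-$|\mu|$ estimate hinges on canceling this singularity against the linear oscillation, while when $a>1$ the weight grows at infinity (corresponding to $\sigma<0$) and the $\mu|\eta|^a$ phase term must supply decay via integration by parts. A related nuisance is the borderline integrability at the endpoint $\sigma=(1-a)/2$, which produces at worst a logarithmic factor absorbed into constants. Part (2) with the inhomogeneous weight $(1+\xi^2)^{-\sigma}$ follows from the same analysis on $|\xi|\gtrsim 1$, where $(1+\xi^2)^{-\sigma}\asymp|\xi|^{-2\sigma}$; on $|\xi|\lesssim 1$ the bounded weight yields a harmless contribution under the constraint $\sigma<1/2$, giving the same overall bound.
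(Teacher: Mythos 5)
Your proposal takes a genuinely different route from the paper. The paper treats $\iint g\varphi_T(t)\overline{g\varphi_T}(s)K(t,s)\,dt\,ds$ as a bilinear form, splits $K$ around the critical point $\xi_0(t,s)$, and then controls the pieces separately: the stationary-phase piece via Hardy--Littlewood--Sobolev, and the $(t-s)^{-1}$ boundary and remainder pieces as Calder\'on--Zygmund standard kernels via the $T1$ theorem (Propositions \ref{noxik}, \ref{parxik}). You instead normalize the kernel to a single universal oscillatory profile $F(\mu)$ and run a Schur test on $|K(t,s)|$. For $0<a<1$ this is a legitimate and arguably cleaner alternative: the rescaling bookkeeping you describe does reproduce the exponents, $F(\mu)\sim|\mu|^{2\sigma-1}$ as $\mu\to 0$ (which for $\sigma>0$ is locally integrable, even at the endpoint $\sigma=(1-a)/2$ where $F\sim|\mu|^{-a}$), and van der Corput gives $|F(\mu)|\lesssim|\mu|^{-1/2}$ for $|\mu|\gtrsim 1$, so $\int_{-M}^M|F|\lesssim 1+M^{1/2}$ as you need. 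In this regime the Schur test buys you a much shorter proof, at the cost of losing the case-by-case control that the paper's decomposition gives (e.g.\ the explicit constants, and the ability to handle the inhomogeneous weight on $|\xi|\lesssim 1$ without the hand-waving comparison $(1+\xi^2)^{-\sigma}\asymp|\xi|^{-2\sigma}$, which does not rescale cleanly).

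However there is a genuine gap for $a>1$ at the required endpoint $\sigma=(1-a)/2$, and your dismissal of it as ``at worst a logarithmic factor'' is incorrect. There the weight is $|\eta|^{a-1}$ (growing), and for small $\mu$ the phase's dominant term at infinity is $\mu|\eta|^a$, so oscillation only kicks in at $|\eta|\sim|\mu|^{-1/a}$; one finds
\[
F(\mu)=\tfrac1a\!\int_0^\infty\! e^{-i\mu(u+c\,u^{1/a})}du+\tfrac1a\!\int_0^\infty\! e^{-i\mu(u-c\,u^{1/a})}du
\;\sim\;\frac{2}{a\,i\mu}\quad(\mu\to0),
\]
with no cancellation between the two halves. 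Thus $|K(t,s)|\sim c/|t-s|$ near the diagonal and $\sup_t\int|K(t,s)|\,ds=\infty$: the Schur test simply fails, not up to a log but outright. This is precisely the singularity the paper isolates in $K_2'(t,s)$ and controls with the $T1$ theorem, which exploits the antisymmetric, Hilbert-transform-like cancellation of the $1/(t-s)$ kernel that an absolute-value Schur bound cannot see. To repair your argument for $a>1$ you would need to split off the leading $i c_0/(t-s)$ part of $K$ and bound it as a Calder\'on--Zygmund operator (as the paper does), reserving the Schur test for the remainder --- at which point the proof is no longer structurally different from the paper's.
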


The proof of Proposition \ref{KEY} follows by applying Lemma \ref{Tthm} to the last line of \eqref{Tcalc}. 

\begin{rem}
The lemma above is the crux of the argument in that it tracks the contributions of the exponent $a$ and the parameter $y$ on the bounds of the operators. In practice, these are oscillatory integral operator bounds, 
similar to those seen in Miyachi's work \cite{Miyachi} and developed in many different directions. We consider oscillatory integrals of the form 
\[\int e^{i\Psi(\xi)}w(\xi)d\xi\] where the weight function $w$ corresponds to the homogeneous or inhomogeneous Sobolev weight. The primary technical difference between the work here and in early work such as Miyachi 
is we do not assume compact support in the oscillatory integral. The balance between behavior at the origin, near the critical points, and at infinity gives the range of spaces listed in Lemma \ref{Tthm}. 
\end{rem}

\subsubsection{Proof of Lemma \ref{Tthm}} \label{mainT}

In the case of general $a$, the proof of Lemma \ref{Tthm} relies on careful analysis of the kernels of the various Fourier integral operators. Before we begin the proof, we collect a few useful results from harmonic analysis. First, we will use the Hardy-Littlewood-Sobolev inequality to control the kernel of the operator near critical points of the phase function.  
\begin{lem}[Hardy-Littlewood-Sobolev Lemma]\label{HLS}
For $n\geq 1$, $1<p<q<\infty$, $0<\beta<n$, and  \begin{equation}
I_\beta g(x)  = \int_{\R^n} \frac{g(z)}{|x-z|^{\beta}}dz,                                                                                         
                                                                                        \end{equation}
 \[\|I_\beta g\|_{L^q(\R^n)}\leq C(p,q) \|g\|_{L^p(\R^n)}\mbox{ when  }\,\frac{1}{q} = \frac{1}{p} -\frac{n-\beta}{n}.\] 
\end{lem}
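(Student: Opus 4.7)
The plan is to give the standard Hedberg-style proof, which extracts the bound from a pointwise interpolation between the Hardy--Littlewood maximal function and a crude $L^p$ tail estimate. First I would fix $x\in\R^n$, choose a parameter $R>0$ to be optimized, and split
\begin{equation*}
I_\beta g(x)=\int_{|x-z|<R}\frac{g(z)}{|x-z|^\beta}\,dz+\int_{|x-z|\geq R}\frac{g(z)}{|x-z|^\beta}\,dz=:A_R(x)+B_R(x).
\end{equation*}

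For $A_R$, I would decompose the ball $|x-z|<R$ into dyadic annuli $2^{-k-1}R\leq|x-z|<2^{-k}R$ for $k\geq 0$, on each of which $|x-z|^{-\beta}\leq C(2^{-k}R)^{-\beta}$, and bound the resulting average by the Hardy--Littlewood maximal function $Mg(x)$; summing the geometric series (which converges because $n-\beta>0$) gives a pointwise bound of the form $|A_R(x)|\leq C\,R^{n-\beta}Mg(x)$. For $B_R$, I would apply H\"older's inequality with exponents $p,p'$:
\begin{equation*}
|B_R(x)|\leq\|g\|_{L^p}\Bigl(\int_{|x-z|\geq R}|x-z|^{-\beta p'}\,dz\Bigr)^{1/p'}.
\end{equation*}
The Lebesgue integral in polar coordinates converges precisely when $\beta p'>n$, which is equivalent to $1/q=1/p-(n-\beta)/n>0$, so the hypothesis $q<\infty$ is exactly what is needed; evaluating yields $|B_R(x)|\leq C\,R^{n/p'-\beta}\|g\|_{L^p}$.

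Next I would balance the two bounds by choosing $R$ so that $R^{n-\beta}Mg(x)=R^{n/p'-\beta}\|g\|_{L^p}$, i.e.\ $R=(\|g\|_{L^p}/Mg(x))^{p/n}$, at points where $0<Mg(x)<\infty$. Substituting and using the scaling identity $(n-\beta)p/n=1-p/q$ coming from the prescribed relation $1/q=1/p-(n-\beta)/n$, this yields the pointwise inequality
\begin{equation*}
|I_\beta g(x)|\leq C\,\|g\|_{L^p}^{1-p/q}\,(Mg(x))^{p/q}.
\end{equation*}
Raising to the $q$th power, integrating over $\R^n$, and invoking the Hardy--Littlewood maximal inequality $\|Mg\|_{L^p}\leq C(p,n)\|g\|_{L^p}$ gives the claimed bound.

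The one delicate point is the very last step: the maximal function bound requires $p>1$, so the proof collapses at the endpoint $p=1$. Since the lemma as stated assumes $1<p<q<\infty$, this is exactly the range where Hedberg's argument goes through cleanly, and no weak-type/Marcinkiewicz interpolation detour is needed. A minor bookkeeping obstacle is verifying that the implicit constant depends only on $p,q,n,\beta$ and tracking that all uses of $\beta<n$ and $\beta p'>n$ arise naturally from the given hypotheses, but there are no further technical difficulties.
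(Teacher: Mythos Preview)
Your argument is correct: this is the standard Hedberg pointwise inequality, and every step (the dyadic annulus estimate giving $|A_R|\lesssim R^{n-\beta}Mg(x)$, the H\"older tail estimate converging because $\beta p'>n$ is equivalent to $q<\infty$, the optimization in $R$, and the final appeal to the $L^p$ maximal theorem for $p>1$) checks out as written.

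The paper, however, does not prove this lemma at all: it simply cites Stein's \emph{Harmonic Analysis} and moves on. So there is no in-paper argument to compare against. Your proof is strictly more than the paper provides; the Hedberg route you chose is one of the two standard ones (the other being the weak-type $(1,q)$ endpoint plus Marcinkiewicz interpolation), and it has the advantage of being entirely self-contained once the maximal theorem is granted, which fits the stated range $1<p<q<\infty$ exactly.
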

For a proof of this lemma, see \cite{Stein1993}. 

The other major pieces of the kernels we study are easily dealt with using the following straightforward corollaries to the $T1$ Theorem of David and Journ\'e \cite{Journe1984}. These corollaries allow us to deal with a very specific example of the so-called standard kernels, either with bounds independent of an additional variable $\xi$ or with an $L^1$ factor in $\xi$.
\begin{ppn}\label{noxik}
Let $a(t,s;\xi)$ be smooth function in $t$ and $s$ supported in ball of radius $\rho$ in $\R_t\times\R_s$ such that $a(t,s;\xi) = a(s,t;\xi)$ and both $a(t,s;\xi)$ and $\partial_t a(t,s;\xi)$ are uniformly bounded in $t,s$ and $\xi$ by constants $C_1$ and $C_2$, respectively. Define the kernel $k(t,s;\xi) = a(t,s;\xi)(t-s)^{-1}$. Then $k(t,s;\xi)$ is a standard kernel uniformly bounded in $\xi$ and the operator $T$ associated to $k(t,s;\xi)$ is bounded from $L^2$ to itself independent of $\xi$ with norm $C_1 +\rho C_2$. 
\end{ppn}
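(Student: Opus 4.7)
My plan is to split the argument into two independent tasks: verifying that $k(t,s;\xi)=a(t,s;\xi)(t-s)^{-1}$ is a standard kernel uniformly in $\xi$, and then proving the $L^2$ operator bound by decomposing $T$ into a Hilbert transform piece plus a Hilbert--Schmidt remainder.

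The first task is straightforward. The size bound $|k|\leq C_1/|t-s|$ is immediate from $|a|\leq C_1$. For the regularity bound, direct differentiation gives $|\partial_t k|\leq C_2/|t-s|+C_1/|t-s|^2$, and since $a$ is supported in a ball of radius $\rho$ the factor $|t-s|^{-1}$ on the support is controlled by $2\rho\,|t-s|^{-2}$, yielding $|\partial_t k|\leq (C_1+2\rho C_2)/|t-s|^2$. The analogous estimate for $\partial_s k$ follows from the symmetry hypothesis $a(t,s)=a(s,t)$, which gives $|\partial_s a|(t,s) = |\partial_t a|(s,t)\leq C_2$. All bounds are $\xi$-independent.

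For the $L^2$ estimate, the heart of the argument is the Taylor expansion at $s=t$:
\[
 a(t,s;\xi)=a(t,t;\xi)+(s-t)\,\tilde a(t,s;\xi),\qquad \tilde a(t,s;\xi):=\int_0^1 (\partial_2 a)(t,t+\tau(s-t);\xi)\,d\tau,
\]
which rewrites the operator as
\[
 Tf(t)=a(t,t;\xi)\,\mathrm{p.v.}\!\int\frac{f(s)}{t-s}\,ds -\int\tilde a(t,s;\xi)\,f(s)\,ds = \pi\,a(t,t;\xi)\,Hf(t)+Rf(t),
\]
where $H$ is the Hilbert transform. The first piece is $a(t,t;\xi)\in L^\infty$ (with bound $C_1$) times the Hilbert transform, so Plancherel gives $L^2$-operator norm $\leq \pi C_1$. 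For the remainder $R$, I bound the kernel $\tilde a$ in two regimes: the crude bound $|\tilde a|\leq C_2$ for $|s-t|\leq 2\rho$, and the refined bound $|\tilde a|\leq 2\rho C_2/|s-t|$ for $|s-t|>2\rho$, the latter coming from the observation that $\tau$ must lie in an interval of length at most $2\rho/|s-t|$ for the point $t+\tau(s-t)$ to fall inside the $s$-projection of the support of $a$. Combined with the $t$-support of length $\leq 2\rho$, this yields $\iint|\tilde a|^2\,dt\,ds\lesssim \rho^2 C_2^2$, so $\|R\|_{L^2\to L^2}\lesssim \rho C_2$ by Hilbert--Schmidt. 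Summing gives the stated bound $C_1+\rho C_2$, uniformly in $\xi$.

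The main obstacle is the tail analysis of $\tilde a$ for $|s-t|\gg\rho$: the naive bound $|\tilde a|\leq C_2$ alone is not integrable on $\mathbb R^2$. The geometric fact that the integration variable $\tau$ is pinched to an interval of length $O(\rho/|s-t|)$ once $|s-t|$ exceeds the diameter of the support is what rescues the Hilbert--Schmidt estimate and produces the correct $\rho$-scaling in the operator norm.
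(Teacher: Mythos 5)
Your argument is correct, but it takes a genuinely different route from the paper. The paper states Proposition \ref{noxik} without proof, remarking only that it ``is easily shown using the $T1$ theorem'' of David and Journ\'e; that would mean verifying the weak boundedness property and that $T1,T^{*}1\in\mathrm{BMO}$ for this specific kernel. You instead give a direct, self-contained decomposition: Taylor-expand $a$ on the diagonal, $a(t,s;\xi)=a(t,t;\xi)+(s-t)\tilde a(t,s;\xi)$, so that $T$ splits into an $L^\infty$ multiple of the (truncated) Hilbert transform, controlled by $C_1$ via Plancherel, plus a remainder with kernel $\tilde a$, which you show is Hilbert--Schmidt with norm $\lesssim\rho C_2$ by exploiting both the $t$-support of length $\le2\rho$ and the pinching of the $\tau$-interval in the Taylor remainder when $|s-t|>2\rho$. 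The symmetry hypothesis is used exactly where it must be, to convert the assumed bound on $\partial_t a$ into a bound on $\partial_s a$ appearing in $\tilde a$. Your approach is more elementary (no $T1$ machinery) and, importantly for the paper's later applications where explicit $y$- and $T$-dependence is tracked, it produces the operator-norm dependence $\lesssim C_1+\rho C_2$ with explicit universal constants rather than the unspecified constants $T1$ would hand back. The trade-off is that the $T1$ route is shorter to \emph{invoke} and generalizes to a wider class of standard kernels; your decomposition is tailored to the specific structure $a(t,s)(t-s)^{-1}$ with smooth compactly supported $a$, which is exactly what the paper needs. Both are sound; yours is arguably the better pedagogical choice since the paper never actually writes out the $T1$ verification.
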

\begin{ppn}\label{parxik}
Let $A(t,s;\xi)$ be smooth function in $t$ and $s$ supported in ball of radius $\rho$ in $\R_t\times\R_s$ and $k(\xi)\in L^1(\R)$ with $k>0$. Assume that $A(t,s;\xi) = A(s,t;\xi)$ and $|A(t,s;\xi)|\leq C_1 k(\xi)$ and $|\partial_t A(t,s;\xi)|\leq C_2 k(\xi)$. Define the kernel $K(t,s;\xi) = A(t,s;\xi)(t-s)^{-1}$. Then $K(t,s;\xi)$ is a standard kernel and the operator $T$ associated to $K(t,s;\xi)$ is bounded from $L^2$ to itself with $\|T\|_{2,2}\leq C k(\xi)$. 
\end{ppn}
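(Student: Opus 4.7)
The plan is to reduce Proposition \ref{parxik} directly to Proposition \ref{noxik} by renormalization. Since $k(\xi)>0$ by hypothesis, I would define
\[B(t,s;\xi) := \frac{A(t,s;\xi)}{k(\xi)}.\]
Then $B$ inherits smoothness in $(t,s)$, the symmetry $B(t,s;\xi)=B(s,t;\xi)$, and support in the ball of radius $\rho$ from $A$, while the hypothesized pointwise bounds on $A$ and $\partial_t A$ translate into $|B(t,s;\xi)|\leq C_1$ and $|\partial_t B(t,s;\xi)|\leq C_2$ \emph{uniformly in $\xi$}. Thus $B$ fits the hypotheses of Proposition \ref{noxik} verbatim.

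Applying Proposition \ref{noxik} to $B$ yields an $L^2\to L^2$ bound $\|T_B\|_{2,2}\leq C_1+\rho C_2$ independent of $\xi$, where $T_B$ has kernel $B(t,s;\xi)/(t-s)$. Because $K(t,s;\xi)=k(\xi)\,B(t,s;\xi)/(t-s)$, the operator $T$ is simply $k(\xi)\,T_B$ at each fixed $\xi$, whence $\|T\|_{2,2}\leq k(\xi)(C_1+\rho C_2)$, which is the asserted bound with $C=C_1+\rho C_2$.

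For the standard kernel claim I would verify the size and smoothness estimates directly. The size bound $|K(t,s;\xi)|\leq C_1 k(\xi)/|t-s|$ is immediate. Differentiating gives
\[\partial_t K = \frac{\partial_t A}{t-s} - \frac{A}{(t-s)^2},\]
which is pointwise bounded by $C_2 k(\xi)/|t-s| + C_1 k(\xi)/|t-s|^2$; since $K$ is supported where $|t-s|\leq 2\rho$, I can trade the first term for a $2\rho\,k(\xi)/|t-s|^2$ bound, yielding $|\partial_t K|\leq (C_1+2\rho C_2)\,k(\xi)/|t-s|^2$, and the $\partial_s$ estimate follows from the symmetry of $A$. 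I do not expect a substantive obstacle here: the content of the proposition is really that an amplitude whose $\xi$-profile is governed by a prescribed weight $k$ produces an operator with norm controlled by that same weight, and the normalization $B=A/k(\xi)$ is designed precisely to extract this dependence from the conclusion of Proposition \ref{noxik}. The $L^1$ integrability of $k$ plays no role at this step; it will presumably enter only downstream, when $\|T\|_{2,2}$ is integrated against $k(\xi)$ in an outer variable.
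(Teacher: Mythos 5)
Your reduction is correct, and since the paper omits the proof entirely (the remark following the propositions states only that they are easily shown using the $T1$ theorem), there is no canonical argument to compare against. Your normalization $B = A/k(\xi)$ is exactly the right device: it exhibits Proposition \ref{parxik} not as an independent application of the $T1$ theorem but as an immediate corollary of Proposition \ref{noxik}, with the $\xi$-dependent weight factoring out and scaling the operator norm linearly. This is arguably cleaner than re-verifying the $T1$ hypotheses from scratch on the kernel $K$. Your direct check of the standard kernel estimates is also sound, including the use of the support constraint $|t-s|\leq 2\rho$ to trade the $C_2 k(\xi)/|t-s|$ term arising from $\partial_t A/(t-s)$ for a $2\rho C_2 k(\xi)/|t-s|^2$ term. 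You are also right that the $L^1$ hypothesis on $k$ plays no role at the level of the proposition itself; it enters only downstream, when the resulting $\xi$-dependent operator norm $C\,k(\xi)$ is integrated in $\xi$ in the proof of Lemma \ref{Tthm}.
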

\begin{rem}
 These propositions are easily shown using the $T1$ theorem, so we state them here without proof. 
\end{rem}

\begin{proof}[Proof of Lemma \ref{Tthm}]
 Assume without loss of generality that $y>0$. The two cases in $a$ are proved using the same techniques, but the details are subtly different. We will present the full details for the $0<a<1$ for \eqref{inhomT} and a sketch of the ideas for \eqref{homT}. The arguments are identical for the $1<a$ case. 

The main idea of this proof is rewriting the weighted $L^2$ norm as an operator and analyzing the kernel of this operator. We will show that the kernel is a linear combination of standard kernels and fractional integrals. Since the kernel is an oscillatory integral, we will use a careful decomposition combined with integration by parts and the method of stationary phase to control it. 

We begin by rewriting the square on the left hand side of the inequality as a product and changing the order of integration. In the proofs of \eqref{homT} and \eqref{inhomT}, the only differences lie in the kernel analysis. For now, let $\omega(\xi)$ denote a general weight function. Then, 
\[\int \omega(\xi)|\T(g\varphi_T)(\xi)|^2d\xi = \iint g\varphi_T(t) \overline{g\varphi_T}(s) \int e^{-i\Psi(\xi)}\omega(\xi)d\xi dtds,\] where $\Psi(\xi) = y(t^{1/a} - s^{1/a})\xi +(t-s)|\xi|^a$ and let 
$\xi_0 = -\left(\frac{yf(t,s)}{a}\right)^{\frac{1}{a-1}}$ denote the critical point of this phase function $\Psi(\xi)$ with $f(t,s) := (t^{1/a}-s^{1/a})/(t-s)$.  Our goal is to show kernel estimates on the $\xi$ 
integral so that we can apply H\"older and produce $L^2$ bounds. For clarity, let $K(t,s) = \int e^{-i\Psi(\xi)}\omega(\xi)d\xi$. To get reasonable bounds on $K(t,s)$, we need to consider the integral near the 
critical point and away from the critical point. With different weight functions, these estimates proceed somewhat differently. Consider $\omega(\xi) =|\xi|^{-2\sigma}$.
%
 Let $\e \leq |\xi_0|/2$; then:
\begin{align*}
 K(t,s)& = \int_{|\xi-\xi_0|<\e} e^{-i\Psi(\xi)}(1+\xi^2)^{-\sigma}d\xi + \int_{|\xi-\xi_0|>\e} e^{-i\Psi(\xi)}(1+\xi^2)^{-\sigma}d\xi\\
&= I+II.
\end{align*}
Now, $|I|\leq 2C\e (1+\xi_0^2)^{-\sigma}$, and we will use integration by parts to bound the second term: 
\begin{align*}
 II &= \int_{|\xi-\xi_0|>\e} \partial_\xi \left(e^{-i\Psi(\xi)}\right)\frac{1}{-i(1+\xi^2)^{\sigma}\Psi'(\xi)}d\xi\\
& = \left.\frac{e^{-i\Psi(\xi)}}{-i\Psi'(\xi)(1+\xi^2)^{\sigma}}\right|_{|\xi-\xi_0|>\e} - \int_{|\xi-\xi_0|>\e} e^{-i\Psi(\xi)}\partial_\xi \left(\frac{1}{-i(1+\xi^2)^{\sigma}\Psi'(\xi)}\right)d\xi 
\end{align*} Here the requirements for $\sigma$ come into play. In order to have these terms be finite at zero and decay at $\infty$, $0<\sigma$. We will show the bounds for the boundary term in detail, and the bounds on the remaining term are similar. When we evaluate the boundary term, the only contributions come from $\xi_0-\e$ and $\xi_0+\e$, so we have 
\[BT = \frac{e^{-i\Psi(\xi_0-\e)}}{-i\Psi'(\xi_0-\e)(1+(\xi_0-\e)^2)^{\sigma}} -  \frac{e^{-i\Psi(\xi_0+\e)}}{-i\Psi'(\xi_0+\e)(1+(\xi_0+\e)^2)^{\sigma}}.\] First, notice that $\Psi'(\xi_0 \pm \e) = \pm a(1-a)\e(t-s)|\xi_0|^{a-2}$. We can neglect the higher order terms in $\e$ and rewrite the boundary terms: 
\begin{align*}
|BT|&\leq \frac{|\xi_0|^{2-a}}{a(1-a)\e|t-s|}\left|\frac{e^{-i\Psi(\xi_0-\e)}}{(1+(\xi_0-\e)^2)^{\sigma}} +  \frac{e^{-i\Psi(\xi_0+\e)}}{(1+(\xi_0+\e)^2)^{\sigma}}\right|\\
&\leq \frac{C|\xi_0|^{2-a}}{a(1-a)\e|t-s|(1+\xi_0^2)^\sigma}. 
\end{align*}
Now, we optimize our choice of $\e$ by setting the two terms equal to each other:
\begin{align*}
 2C\e (1+\xi_0^2)^{-\sigma} = & \frac{C|\xi_0|^{2-a}}{a(1-a)\e|t-s|(1+\xi_0^2)^\sigma}\\
\e^2  = &\frac{C|\xi_0|^{2-a}}{a(1-a)|t-s|}\\
\e = &\frac{C'|\xi_0|^{1-a/2}}{|t-s|^{1/2}}
\end{align*}

However, this optimal $\e_1 = \frac{C'|\xi_0|^{1-a/2}}{|t-s|^{1/2}}$ is not always less than $|\xi_0|/2$. Let \[\e = \min\left\{\frac{C'|\xi_0|^{1-a/2}}{|t-s|^{1/2}}, |\xi_0|/2\right\}.\] In the homogeneous case, 
a similar argument produces the same choice of $\e$.

We will do the rest of the calculations with $\omega(\xi) = (1+\xi^2)^{-\sigma}$. The homogeneous case follows by similar and slightly easier arguments.

Observe that
\begin{equation}\label{eps} |\xi_0|/2<\frac{C'|\xi_0|^{1-a/2}}{|t-s|^{1/2}} \Leftrightarrow y^{\frac{a}{2(1-a)}}> \frac{C|t-s|^{1/2}}{f(t,s)^{\frac{a}{2(1-a)}}}.\end{equation} Assume $y^{\frac{a}{2(1-a)}}>\frac{C|t-s|^{1/2}}{f(t,s)^{\frac{a}{2(1-a)}}}$, and therefore $\e = |\xi_0|/2$. Let $\gamma(a) = a^{\frac{1}{a-1}}$. For $0<a<1$, $\gamma(a)>2$. More importantly, $\Psi(\gamma(a)\xi_0)=0$. We adjust the decomposition of $K(t,s)$ so that one endpoint lies on this very convenient number: 
\begin{align*}
K(t,s)&=\int^{\xi_0/2}_{\gamma(a)\xi_0} e^{-i\Psi(\xi)}(1+\xi^2)^{-\sigma}d\xi + \int_{(\gamma(a)\xi_0,\xi_0/2)^C} e^{-i\Psi(\xi)}(1+\xi^2)^{-\sigma}d\xi\\
K(t,s)&= K_1(t,s) + K_2(t,s).
\end{align*}
The easier term is $K_1$, so we will bound it first. Clearly, \[|K_1(t,s)|< C(\gamma(a) - 1/2)|\xi_0|(1+\xi_0^2)^{-\sigma}.\] By our assumption on $y$, $|\xi_0|<\frac{C'|\xi_0|^{1-a/2}}{|t-s|^{1/2}}$, so we can apply the Hardy-Littlewood-Sobolev lemma, Lemma \ref{HLS}, for fractional integration: 
\begin{align*}
\left|\int g\varphi_T(t) \int K_1(t,s) \overline{g\varphi_T}(s)ds\,dt \right|&\leq \int\hspace{-1pt}|g\varphi_T(t)|\int \frac{C'(\gamma(a) - 1/2)|\xi_0|^{1- \frac{a}{2}}|g\varphi_T(s)|}{(1+\xi_0^2)^\sigma |t-s|^{1/2}}ds\,dt\\
&\leq \frac{C'(\gamma(a) -1/2)a^{(2-a)/2(1-a)} y^{\frac{2- a}{2(a-1)}} T^{\frac{a-2}{2a}}}{(1+C a^{\frac{2}{1-a}}y^{\frac{2}{1-a}}T^{-2/a})^\sigma }\|g\varphi_T\|^2_{L^{4/3}}\\
&\leq  C''(\gamma(a) -1/2)a^{\frac{2-a-4\sigma}{2(1-a)}} y^{\frac{2-a-4\sigma}{2(a-1)}} T^{\frac{a-2+4\sigma}{2a}}T^{1/2}\|g\|_{L^2}^2\\
&\leq C_1(a) y^{\frac{2-a-4\sigma}{2(a-1)}}T^{\frac{a-1+2\sigma}{a}}\|g\|_{L^2}^2.                                                                                                                                                                                                                                                                                                                                                                                                 \end{align*}
Now we turn to $K_2(t,s)$. Recall
\[K_2(t,s) = \int_{-\infty}^{\gamma(a)\xi_0} e^{-i\Psi(\xi)}(1+\xi^2)^{-\sigma}d\xi+\int_{\xi_0/2}^\infty e^{-i\Psi(\xi)}(1+\xi^2)^{-\sigma}d\xi.\] Let's begin with the first term. By integration by parts, 
\begin{align*}
 \int_{-\infty}^{\gamma(a)\xi_0}\frac{ e^{-i\Psi(\xi)}}{(1+\xi^2)^{\sigma}}d\xi& =\left. \frac{e^{-i\Psi(\xi)}}{-i\Psi'(\xi)(1+\xi^2)^\sigma}\right|_{-\infty}^{\gamma(a) \xi_0} \\
&\hspace{1in}- \int_{-\infty}^{\gamma(a)\xi_0} e^{-i\Psi(\xi)}\partial_\xi\left(\frac{1}{-i\Psi'(\xi)(1+\xi^2)^{\sigma}}\right)d\xi\\
& = \frac{|\xi_0|^{1-a}}{-ia(1-a)(t-s)(1+a^{\frac{2}{a-1}}\xi_0^2)^\sigma}\\
&\hspace{1in}- \int_{-\infty}^{\gamma(a)\xi_0} e^{-i\Psi(\xi)}\partial_\xi\left(\frac{1}{-i\Psi'(\xi)(1+\xi^2)^{\sigma}}\right)d\xi\\
&= K'_2(t,s) +K''_2(t,s).
\end{align*}
The kernel $K'_2(t,s)$ satisfies all the conditions of Proposition \ref{noxik} with $C_1 = \frac{y^{-1}T^{1- 1/a}}{(1-a)(1+y^{2/(a-1)}T^{-2/a})^\sigma}$, $C_2 =\frac{2y^{-1}T^{-1/a}}{(1-a)(1+y^{2/(a-1)}T^{-2/a})^\sigma}$ and $\rho =T$. Therefore, 
\begin{align*}
| \int g\varphi_T(t) \int K'_2(t,s) \overline{g\varphi_T}(s)ds |&\leq \|g\|_{L^2} \frac{Cy^{-1}T^{1- 1/a}}{(1-a)(1+y^{2/(a-1)}T^{-2/a})^\sigma}\|g\|_{L^2} \\
 &\leq \frac{C}{1-a} y^{\frac{1-a-2\sigma}{a-1}}T^{\frac{2\sigma-1+a}{a}}\|g\|_{L^2}^2
\end{align*}

For the kernel $K_2 ''(t,s)$, we want to change the order of integration, so that we may integrate in $t$ and $s$ before integrating in $\xi$. Showing $K''_2$ is a nice kernel is complicated by the presence of the exponential; if we change the order of integration, the exponential splits into a function of norm 1 in $t$ and $s$, leaving only the derivative for the kernel. In fact, $\partial_\xi\left(\frac{1}{-i\Psi'(\xi)(1+\xi^2)^{\sigma}}\right)$ satisfies the kernel conditions of Proposition \ref{parxik}. It will be convenient for notation to let $\xi_2 =-(\frac{y}{a} (2T)^{(1-a)/a})^{\frac{1}{a-1}})$ and $\xi_1 = -(\frac{y}{a} T^{(1-a)/a})^{\frac{1}{a-1}})$ Since $\gamma(a)\xi_0 < \xi_2$, when we change the order of integration, we have 
\begin{align*}
\int&\int g\varphi_T (t)\overline{g\varphi_T}(s) \int_{-\infty}^{\gamma(a)\xi_0} e^{-i\Psi(\xi)}\partial_\xi\left(\frac{1}{-i\Psi'(\xi)(1+\xi^2)^{\sigma}}\right)d\xi\\
&\\
& = \int_{-\infty}^{\xi_2}\iint_{\Omega(t,s)(\xi)} g(t)e^{-i(yt^{1/a}\xi+t|\xi|^a)} \frac{\varphi_T(t) \overline{\varphi_T}(s)A(t,s;\xi)}{i(t-s)}\overline{g}(s)e^{i(ys^{1/a}\xi+s|\xi|^a)}ds\,dt\,d\xi \end{align*}
where \[A(t,s;\xi) =\varphi_T(t) \varphi_T(s) \partial_\xi \left(-i(yf(t,s) +a|\xi|^{a-1}\sgn\xi)(1+\xi^2)^{\sigma}\right)^{-1},\] and $\Omega(t,s)(\xi)$ is the region in $\R^2$ from Fubini theorem. When $\xi < \xi_1$, we can apply Proposition \ref{parxik} for $(t,s)\in [T,2T]^2$, so the exact description of $\Omega(t,s)$ is not important, since its intersection with the square is clearly contained in the square. So we can decompose again:
\begin{align*}
 \int_{-\infty}^{\xi_2} &\iint_{\Omega(t,s)(\xi)} g(t)e^{-i(yt^{1/a}\xi+t|\xi|^a)} \frac{\varphi_T(t) \overline{\varphi_T}(s)A(t,s;\xi)}{i(t-s)}\overline{g}(s)e^{i(ys^{1/a}\xi+s|\xi|^a)} ds dt d\xi\\& = \int_{-\infty}^{\xi_1} \iint_{\Omega(t,s)(\xi)} g(t)e^{-i(yt^{1/a}\xi+t|\xi|^a)} \frac{\varphi_T(t) \overline{\varphi_T}(s)A(t,s;\xi)}{i(t-s)}\overline{g}(s)e^{i(ys^{1/a}\xi+s|\xi|^a)} ds dt d\xi\\&+\int_{\xi_1}^{\xi_2} \iint_{\Omega(t,s)(\xi)} g(t)e^{-i(yt^{1/a}\xi+t|\xi|^a)} \frac{\varphi_T(t) \overline{\varphi_T}(s)A(t,s;\xi)}{i(t-s)}\overline{g}(s)e^{i(ys^{1/a}\xi+s|\xi|^a)} ds dt d\xi\\
&=i+ii
\end{align*}
We will use the following claim, combined with Proposition \ref{parxik} to bound term $i$: 
\begin{clm}
Let \[k(\xi) = \frac{1}{(1+\xi^2)^\sigma (yT^{1/a -1}/a - a|\xi|^{a-1})}.\] Then, for $A(t,s;\xi)$ defined above and $\xi\in(-\infty,\xi_1)$, we have
\[|A(t,s;\xi)|\leq \frac{2a}{1-a}\left(1+ \frac{a(2^{(1-a)/a}-1)}{1 -a }\right)k'(\xi) \] and \[|\partial_t A(t,s;\xi)| = |\partial_s A(t,s;\xi)| \leq \frac{4a}{T(1-a)^2}\left(1+ \frac{a(2^{(1-a)/a}-1)}{1 -a }\right)k'(\xi).\]
\end{clm}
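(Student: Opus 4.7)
The plan is to obtain the bound by directly computing $A(t,s;\xi)$ via the quotient rule and then matching the resulting expression to the two terms of $k'(\xi)$. Writing $G(\xi):=yf(t,s)+a|\xi|^{a-1}\sgn\xi$, which for $\xi<0$ equals $yf(t,s)-a|\xi|^{a-1}$, direct differentiation of $(-iG(\xi)(1+\xi^2)^\sigma)^{-1}$ yields
\[
A(t,s;\xi)=i\,\varphi_T(t)\varphi_T(s)\left[\frac{a(1-a)|\xi|^{a-2}}{G(\xi)^2\,(1+\xi^2)^\sigma}+\frac{2\sigma|\xi|}{G(\xi)\,(1+\xi^2)^{\sigma+1}}\right],
\]
with both bracketed summands non-negative for $\xi<0$. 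Setting $B:=yT^{1/a-1}/a$, the same calculation applied to $k$ gives
\[
k'(\xi)=\frac{a(1-a)|\xi|^{a-2}}{(1+\xi^2)^\sigma(B-a|\xi|^{a-1})^2}+\frac{2\sigma|\xi|}{(1+\xi^2)^{\sigma+1}(B-a|\xi|^{a-1})}.
\]
The term-by-term correspondence reduces the claim to a comparison of $G(\xi)$ with $B-a|\xi|^{a-1}$ on the relevant region.

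The comparison I would carry out is a two-sided bound on $G$ valid for $t,s\in[T,2T]$ and $\xi<\xi_1$. The mean value theorem applied to $\tau\mapsto\tau^{1/a}$ gives $f(t,s)\in[T^{1/a-1}/a,(2T)^{1/a-1}/a]$, hence $B\leq yf(t,s)\leq 2^{(1-a)/a}B$. Coupled with $|\xi_1|^{a-1}=B$ and $a-1<0$, which imply $a|\xi|^{a-1}<aB$ and thus $B-a|\xi|^{a-1}>(1-a)B$, this produces
\[
B-a|\xi|^{a-1}\,\leq\,G(\xi)\,\leq\,\left(1+\tfrac{2^{(1-a)/a}-1}{1-a}\right)(B-a|\xi|^{a-1}).
\]
Substituting the lower bound on $G$ into the formula for $|A|$ and then majorizing both summands by a common multiple of $k'(\xi)$, using the upper bound on $G$ to uniformize the ratio between the two summands, produces the asserted estimate with the stated prefactor.

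For $\partial_t A$, I would apply the product rule. The derivative of $\varphi_T(t)$ contributes a factor of order $1/T$. The derivatives of $G^{-2}$ and $G^{-1}$ each produce a factor $\partial_t G = y\,\partial_t f(t,s)$. Using the integral representation $f(t,s)=\int_0^1 (1/a)(s+u(t-s))^{1/a-1}\,du$, direct differentiation in $t$ yields $|\partial_t f(t,s)|\leq \frac{1-a}{2a^2}\,T^{1/a-2}$, so $|\partial_t G|$ is of order $B/T$. Inserting this into the $\partial_t$-analog of the expansion of $A$ introduces an extra factor $|\partial_t G|/G \leq C_a/(T(1-a))$ relative to the bound on $|A|$, yielding the stated bound on $|\partial_t A|$. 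The equality $|\partial_t A|=|\partial_s A|$ follows at once from the symmetry $A(t,s;\xi)=A(s,t;\xi)$ in the $(t,s)$ variables.

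The main obstacle is not conceptual but combinatorial: careful bookkeeping is required to track the explicit constants through the quotient rule, the two-sided bound on $G$, and the additional differentiation in $t$, in order to land on the precise prefactors stated in the claim. The underlying estimates are all elementary.
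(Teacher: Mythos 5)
Your approach matches the paper's: compute $A$ and $k'$ by the quotient rule, and compare $G(\xi):=yf(t,s)-a|\xi|^{a-1}$ to $D(\xi):=B-a|\xi|^{a-1}$ via the mean value theorem, which is precisely the one-line observation the paper gives (``by definition $yf(t,s)-a|\xi|^{a-1}>\tfrac{1-a}{a}\,yT^{(1-a)/a}$''). Your explicit expansions of $|A|$ and $k'(\xi)$ are correct, the MVT range for $f(t,s)$ is correct, and the derivative of $f$ via the integral representation for the $\partial_t$ bound is the right way to go.

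One structural remark: the upper bound on $G$, and the ``uniformize the ratio between the two summands'' step, are not needed and do not do what you seem to think. Once you have the single lower bound $G(\xi)\ge D(\xi)>0$ for $\xi<\xi_1$, the two summands of $|A|$ are \emph{individually} dominated by the corresponding two summands of $k'$ — the first via $1/G^2\le 1/D^2$, the second via $1/G\le 1/D$ — so $|A|\le \varphi_T(t)\varphi_T(s)\,k'(\xi)\le k'(\xi)$ directly; no uniformization across summands is required, and the comparison $G\le (1+\tfrac{2^{(1-a)/a}-1}{1-a})D$ is never invoked. The stated prefactor $\tfrac{2a}{1-a}\bigl(1+\tfrac{a(2^{(1-a)/a}-1)}{1-a}\bigr)$ in the paper is always $\ge 1$, so it holds with slack; your argument does not and need not ``produce'' it exactly. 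Two small points: (i) your bound $|\partial_t f|\le \tfrac{1-a}{2a^2}T^{1/a-2}$ is the correct upper bound only for $a\ge 1/2$; for $a<1/2$ the maximizer of $c^{1/a-2}$ on $[T,2T]$ is at $c=2T$, giving an extra harmless factor $2^{1/a-2}$; (ii) $|\partial_t A(t,s;\xi)|=|\partial_s A(t,s;\xi)|$ is not a pointwise identity for a general symmetric function — symmetry gives $\partial_t A(t,s)=\partial_s A(s,t)$ — but the same bound applies to both by symmetry, which is all that Proposition~\ref{parxik} needs.
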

The proof of this claim is straightforward, since by definition \newline $yf(t,s) -a|\xi|^{a-1}>((1-a)/a)yT^{\frac{1 -a}{a}}$. Therefore, 
\begin{align*}
 |i| \leq \int_{-\infty}^{\xi_1} &\left|\iint_{\Omega(t,s)(\xi)} g(t)e^{-i(yt^{1/a}\xi+t|\xi|^a)} \frac{\varphi_T(t) \overline{\varphi_T}(s)A(t,s;\xi)}{i(t-s)}\overline{g}(s)e^{i(ys^{1/a}\xi+s|\xi|^a)} ds dt\right| d\xi\\
&\leq \|g\|_{L^2}\int_{-\infty}^{\xi_1} C(a) k'(\xi)d\xi \|g\|_{L^2}\\
&\leq\frac{aC(a)}{\left(1 + y^{2/(a-1)}T^{-2/a}\right)^\sigma(1-a)yT^{(1-a)/a}}\|g\|_{L^2}^2\\
&\leq aC(a) y^{\frac{1-a-2\sigma}{a-1}}T^{(2\sigma +a-1)/a}\|g\|^2_{L^2}
\end{align*}
where $C(a) = \frac{2a}{1-a}\left(1+ \frac{a(2^{(1-a)/a}-1)}{1 -a }\right)\left(1+\frac{2}{1-a}\right)$.

Now we consider term $ii$. Since we are integrating in $\xi$ over a bounded interval, we do not need to use Proposition \ref{parxik}. It suffices to show that $A(t,s;\xi)$ is bounded. With $\xi\in(\xi_1,\xi_2)$, we cannot neglect the region $\Omega(t,s)$ in favor of $[T,2T]^2$. Observe that $\Omega(t,s) = \{|\xi|^{a-1}< yf(t,s) < \frac{y}{a}T^{(1-a)/a}\}$. Therefore, $yf(t,s) + a |\xi|^{a-1}\sgn\xi> (1-a)|\xi|^{a-1}$. Using this bound and the range of $\xi$, we have the following claim:

\begin{clm}
 Let $A(t,s;\xi)$ be as above with $t,s \in \Omega(t,s)$ and $\xi\in(\xi_1,\xi_2)$. Then, 
\[|A|\leq C (2\sigma+a(1-a))y^{(-a-2\sigma)/(a-1)}T^{(a+2\sigma)/a}\] and \[|\partial_t A| = |\partial_s A| \leq \frac{4C(2\sigma+2a)}{T(1-a)^2} y^{(-a-2\sigma)/(a-1)}T^{(a+2\sigma)/a}.\]
\end{clm}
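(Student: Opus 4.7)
The plan is to compute $A(t,s;\xi)$ and $\partial_t A$ directly by chain and product rules, and then bound each resulting factor using the two hypotheses: the constraint $(t,s)\in\Omega(t,s)$ supplies the lower bound $|yf(t,s)+a|\xi|^{a-1}\sgn\xi|\geq (1-a)|\xi|^{a-1}$ noted just above the claim, and the constraint $\xi\in(\xi_1,\xi_2)$ pins $|\xi|^{a-1}$ into a narrow interval around $(y/a)T^{(1-a)/a}$, so that $|\xi|\sim (y/a)^{1/(a-1)}T^{-1/a}$.

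For $|A|$, I would write $A=\varphi_T(t)\varphi_T(s)\,F'(\xi)/[iF(\xi)^2]$ with $F(\xi)=(yf(t,s)+a|\xi|^{a-1}\sgn\xi)(1+\xi^2)^\sigma$, and expand $F'$ by Leibniz into a ``phase'' piece $a(a-1)|\xi|^{a-2}(1+\xi^2)^\sigma$ and a ``weight'' piece $(yf+a|\xi|^{a-1}\sgn\xi)\cdot 2\sigma\xi(1+\xi^2)^{\sigma-1}$. The $\Omega$-hypothesis immediately gives $|F|^2\geq (1-a)^2|\xi|^{2(a-1)}(1+\xi^2)^{2\sigma}$, and on the given range $|yf+a|\xi|^{a-1}\sgn\xi|\leq yf\lesssim |\xi|^{a-1}$. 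Dividing, the two contributions become $\tfrac{a}{1-a}|\xi|^{-a}(1+\xi^2)^{-\sigma}$ and $\tfrac{2\sigma C}{(1-a)^2}|\xi|^{2-a}(1+\xi^2)^{-\sigma-1}$. A short case split ($|\xi|\geq 1$ using $(1+\xi^2)^\sigma\geq|\xi|^{2\sigma}$ versus $|\xi|<1$ using $|\xi|^{-a}\leq|\xi|^{-a-2\sigma}$) yields $|\xi|^{-a}(1+\xi^2)^{-\sigma}\leq |\xi|^{-a-2\sigma}$ uniformly, and $|\xi|^2/(1+\xi^2)\leq 1$ reduces the second piece to the same form. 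Combining and substituting the pinned value of $|\xi|$ produces the claimed bound, with the constant $(2\sigma+a(1-a))/(1-a)^2$ emerging naturally from the sum of the two numerators.

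For $|\partial_t A|$, I would differentiate by the product rule. Hitting $\varphi_T(t)$ contributes at most $(C/T)|A|$. The remaining $t$-dependence sits entirely in $f(t,s)$, and a short Taylor expansion $t^{1/a}=s^{1/a}+(1/a)s^{1/a-1}(t-s)+O(T^{1/a-2}(t-s)^2)$ gives $|y\,\partial_t f(t,s)|\lesssim |\xi|^{a-1}/T$ on $[T,2T]^2$, i.e., differentiating $yf$ in $t$ costs exactly one factor of $T^{-1}$. Hence each differentiated appearance of $F$ or of the weight piece of $F'$ scales by $T^{-1}$ relative to its undifferentiated size, and dividing by the appropriate power of $F$ reproduces the same $|\xi|$-dependence as in the bound for $|A|$. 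Summing all contributions gives the second inequality, with constant $\tfrac{4C(2\sigma+2a)}{T(1-a)^2}$. By the symmetry $f(t,s)=f(s,t)$, exchanging $t\leftrightarrow s$ gives the identical estimate for $|\partial_s A|$.

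The main obstacle is bookkeeping: tracking the exponents of $y$, $T$, $a$, and $\sigma$ cleanly through the repeated substitutions, and verifying that the elementary inequality $|\xi|^{-a}(1+\xi^2)^{-\sigma}\leq |\xi|^{-a-2\sigma}$ really absorbs the Sobolev weight uniformly over $|\xi|\in(|\xi_2|,|\xi_1|)$ in both the large-$|\xi|$ and small-$|\xi|$ regimes without losing an additional factor. Differentiating through the denominator $F^2$ for the $\partial_t$-bound also needs care, since the emerging terms of the form $F'\cdot\partial_t F/F^3$ must be checked not to lose an extra power of $|\xi|^{a-1}$ relative to the undifferentiated estimate.
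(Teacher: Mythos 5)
Your proposal is correct and takes exactly the approach the paper implies: the paper states the claim without a detailed proof, relying on the lower bound $yf+a|\xi|^{a-1}\sgn\xi>(1-a)|\xi|^{a-1}$ from $\Omega$ and the fact that $\xi\in(\xi_1,\xi_2)$ pins $|\xi|^{a-1}$ (and hence $yf$) to $\tfrac{y}{a}T^{(1-a)/a}$ up to a factor of $2^{(1-a)/a}$, so that $|\xi|^{-a-2\sigma}\sim y^{(-a-2\sigma)/(a-1)}T^{(a+2\sigma)/a}$. Your elementary case split handling the inhomogeneous weight, and the observation that $y\,\partial_tf$ costs exactly one factor of $T^{-1}$ relative to $yf$, correctly supply the details the paper leaves to the reader; the exact powers of $(1-a)$ differ mildly from the stated constants but these are absorbed into $C'$ and $C(a)$ in Lemma \ref{ybig}, so nothing is lost.
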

Now, we use Proposition \ref{noxik}:
\begin{align*}
 |ii|&\leq \int_{\xi_1}^{\xi_2}\left| \iint_{\Omega(t,s)(\xi)} g(t)e^{-i(yt^{1/a}\xi+t|\xi|^a)} \frac{\varphi_T(t) \overline{\varphi_T}(s)A(t,s;\xi)}{i(t-s)}\overline{g}(s)e^{i(ys^{1/a}\xi+s|\xi|^a)} ds dt\right| d\xi\\
&\leq \|g\|_{L^2}\int_{\xi_1}^{\xi_2} \left(C (2\sigma+a(1-a)) +\frac{4C(2\sigma+2a)}{(1-a)^2}\right) y^{(-a-2\sigma)/(a-1)}T^{(a+2\sigma)/a}\|g\|_{L^2}d\xi\\
&\leq C'\left( (2\sigma+a(1-a)) +\frac{4(2\sigma+2a)}{(1-a)^2}\right)y^{\frac{1-a-2\sigma}{a-1}}T^{(2\sigma+a-1)/a}\|g\|_{L^2}^2
\end{align*}

Finally, we address the term on $(\xi_0/2,\infty)$. The integral from $\xi_0/2$ to 0 is somewhat simpler. Since $0<\sigma<1/2$, we can integrate $|\xi|^{-2\sigma}$ explicitly:
\[\left|\int_{\xi_0/2}^0 e^{-i\Psi(\xi)}(1+\xi^2)^{-\sigma}d\xi\right|\leq C|\xi_0|^{1-2\sigma} \leq \frac{C'|\xi_0|^{1-a/2-2\sigma}}{|t-s|^{1/2}}.\] Therefore, 
\[\left|\iint g\varphi_T(t)\overline{g\varphi_T}(s) \int_{\xi_0/2}^0 e^{-i\Psi(\xi)}(1+\xi^2)^{-\sigma}d\xi ds\,dt\right| \leq C'y^{\frac{2-a-4\sigma}{a-1}}T^{\frac{2\sigma+a-1}{a}}\|g\|^2_{L^2}. \]

When we integrate the kernel from $(0,\infty)$, we use integration by parts as before, and the boundary terms contribute nothing. For the derivative term, we need to be more precise as $\partial_\xi \left((1+\xi^2)^{-\sigma}(yf(t,s)+a|\xi|^{a-1})^{-1}\right)$ changes sign. We do an additional decomposition to preserve monotonicity. Let $\widetilde{\xi}$ be the critical point of $\partial_\xi \left((1+\xi^2)^{-\sigma}(yf(t,s)+a|\xi|^{a-1})^{-1}\right)$. Then, 
 \begin{align*} 
 \iint &\frac{g\varphi_T(t)\overline{g\varphi_T}(s)}{i(t-s)}\int_0^\infty e^{-i\Psi(\xi)} \partial_\xi\left((1+\xi^2)^{-\sigma}(yf(t,s)+a|\xi|^{a-1})^{-1}\right)ds\,dt\,d\xi\\
 &= \iint g\varphi_T(t)\overline{g\varphi_T}(s) K_3(t,s)dtds+\iint g\varphi_T(t)\overline{g\varphi_T}(s) K_4(t,s)dtds
 \end{align*} where 
 \begin{align*}
  K_3(t,s) &= \int_0^{\widetilde{\xi} }e^{-i\Psi(\xi)} \partial_\xi\left((1+\xi^2)^{-\sigma}(yf(t,s)+a|\xi|^{a-1})^{-1}\right)d\xi\\
  K_4(t,s)&=\int_{\widetilde{\xi}}^\infty e^{-i\Psi(\xi)} \partial_\xi\left((1+\xi^2)^{-\sigma}(yf(t,s)+a|\xi|^{a-1})^{-1}\right)d\xi
 \end{align*}

We will present the argument for $K_3$; the analysis for $K_4$ follows the same arguments but with slightly different constants (independent of $y$ and $T$). 
For $K_3$, we apply Proposition \ref{parxik} with $k'(\xi) = \left|\partial_\xi\left((1+\xi^2)^{-\sigma}(CyT^{(1-a)/a}+a|\xi|^{a-1})^{-1}\right)\right|$. Let $\xi_3$ denote the zero of $k'(\xi)$. Then, 
\begin{align*}
\left|\iint\right.&\left.\phantom{\int} g\varphi_T(t)\overline{g\varphi_T}(s) K_3(t,s)dtds\right| \\
& \leq \|g\|_{L^2}^2 \int_0^\infty \left|\partial_\xi\left(\frac{(1+\xi^2)^{-\sigma}}{CyT^{\frac{1-a}{a}}+a|\xi|^{a-1}}\right)\right|d\xi\\
&\leq  \frac{2\| g\|_{L^2}^2}{(1+\widetilde{\xi}^2)^{\sigma}(CyT^{\frac{1-a}{a}}+a|\xi_3|^{a-1})}
\end{align*}
In order for $\widetilde{\xi}$ to be a zero of $k'(\xi)$, it must satisfy \[a(1-a)(1+\xi_3^2) = 2\sigma|\xi_3|^{3-a}(yT^{(1-a)/a}+a|\xi_3|^{a-1}).\] Therefore,   
\[\left|\iint \frac{g\varphi_T(t)\overline{g\varphi_T}(s)}{i(t-s)}\int_0^{\widetilde{\xi}}e^{-i\Psi(\xi)} \partial_\xi\left(\frac{(1+\xi^2)^{-\sigma}}{yf(t,s)+a|\xi|^{a-1}}\right)d\xi ds\,dt \right|\leq C\|g\|_{L^2}^2 |\xi_3|^{-2\sigma - a+1}.\]
Observe that $\xi_3>C|yT^{(1-a)/a}|^{\frac{1}{a-1}}$ for a constant depending only on $a$ and $\sigma$. When $2\sigma\geq 1-a$, that means this $\widetilde{\xi}$ term is bounded by $y^{(-2\sigma-a+1)/(a-1)}T^{(2\sigma+a-1)/a}$ and can be combined with other terms. 
    
Now, we collect all the terms above:
\begin{lem}\label{ybig}
When $y^{\frac{a}{2(1-a)}}>\frac{C|t-s|^{1/2}}{f(t,s)^{\frac{a}{2(1-a)}}}$ and $1-a\leq 2\sigma<1$,   
\begin{equation}
\int (1+\xi^2)^{-\sigma} |\T(g\varphi_T)(\xi)|^2d\xi \leq \left(C_1(a) y^{\frac{a}{2(a-1)}}+C_2(a))\right)y^{\frac{1-a-2\sigma}{a-1}}T^{\frac{a-1+2\sigma}{a}}\|g\|_{L^2}^2.
\end{equation} where $C_2(a)=\left(C'\left( (2\sigma+a(1-a)) +\frac{4(2\sigma+2a)}{(1-a)^2}\right)+aC(a)\right)$ and $C_1(a) = C''(\gamma(a) -1/2)a^{\frac{2-a-4\sigma}{2(1-a)}}$
\end{lem}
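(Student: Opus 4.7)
The lemma is a bookkeeping statement: under the standing hypothesis $y^{\frac{a}{2(1-a)}}>\frac{C|t-s|^{1/2}}{f(t,s)^{\frac{a}{2(1-a)}}}$, which forces the choice $\e = |\xi_0|/2$ throughout the preceding kernel analysis, every bound needed has already been established. My plan is simply to total the contributions, sort them by power of $y$, and read off $C_1(a)$ and $C_2(a)$.

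First I would list the pieces of the decomposition of $\int(1+\xi^2)^{-\sigma}|\T(g\varphi_T)(\xi)|^2\,d\xi$ produced above: the Hardy--Littlewood--Sobolev bound on $K_1$; the Proposition \ref{noxik} bound on $K_2'$; the two pieces $i$ and $ii$ coming from $K_2''$; the elementary estimate on the interval $(\xi_0/2,0)$; and the Proposition \ref{parxik} bounds on $K_3$ and $K_4$. Each contribution is of the form $(\text{const})\cdot y^{\alpha}\, T^{\frac{a-1+2\sigma}{a}}\|g\|_{L^2}^2$, and inspection shows that only two exponents arise, namely $\alpha_1 = \frac{1-a-2\sigma}{a-1}$ (from $K_2'$, from terms $i$ and $ii$, and -- using $1-a\leq 2\sigma$ -- from $K_3$ and $K_4$) and $\alpha_2 = \frac{2-a-4\sigma}{2(a-1)}$ (from the $K_1$ estimate and from the $(\xi_0/2,0)$ piece, both of which came from Hardy--Littlewood--Sobolev applied to the $|\xi_0|^{1-a/2-2\sigma}/|t-s|^{1/2}$ kernel).

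Next I would verify the identity $\alpha_2 - \alpha_1 = \frac{a}{2(a-1)}$ by direct arithmetic, so the second group factors cleanly as $y^{a/2(a-1)}\cdot y^{\alpha_1}$. Summing the six estimates then produces the bound $\bigl(C_1(a) y^{\frac{a}{2(a-1)}} + C_2(a)\bigr)\, y^{\alpha_1}\, T^{\frac{a-1+2\sigma}{a}}\|g\|_{L^2}^2$, with $C_1(a) = C''(\gamma(a)-1/2)a^{\frac{2-a-4\sigma}{2(1-a)}}$ collecting the two Hardy--Littlewood--Sobolev constants, and $C_2(a) = C'\bigl((2\sigma+a(1-a))+\frac{4(2\sigma+2a)}{(1-a)^2}\bigr) + aC(a)$ collecting the remaining constants (the $K_2'$ and $K_3, K_4$ constants absorb harmlessly into $aC(a)$ after a purely numerical redefinition).

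The hypothesis $1-a\leq 2\sigma$ plays a substantive role only in the $K_3, K_4$ bookkeeping: the pointwise bound $|\xi_3|^{-2\sigma-a+1}$ together with the lower bound $\xi_3 \geq c\,|yT^{(1-a)/a}|^{1/(a-1)}$ lands in the $y^{\alpha_1}$ group precisely when $2\sigma+a-1\geq 0$; outside this range the dependence on $y$ would go the wrong way and could not be absorbed. I do not anticipate a genuine obstacle since the substantive oscillatory estimates are all finished; the one thing to be careful about is the exponent arithmetic in $y$ and $T$ when collecting, in particular the identity $\alpha_2 - \alpha_1 = \frac{a}{2(a-1)}$ that makes the factoring work.
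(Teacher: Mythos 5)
Your proposal is correct and takes essentially the same route as the paper: the lemma is, as you say, pure bookkeeping over the preceding kernel decomposition, and you correctly identify the two families of exponents, the identity $\frac{2-a-4\sigma}{2(a-1)}-\frac{1-a-2\sigma}{a-1}=\frac{a}{2(a-1)}$, the absorption of the $K_2'$ and $K_3,K_4$ constants into $C_2(a)$, and the precise role of $1-a\leq 2\sigma$ in making the $K_3,K_4$ term land in the $y^{\alpha_1}$ group.
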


It remains to show the bound for $y^{\frac{a}{2(1-a)}}<C\frac{C|t-s|^{1/2}}{f(t,s)^{\frac{a}{2(1-a)}}}$. Let $F(t,s) = \frac{C|t-s|^{1/2}}{f(t,s)^{\frac{a}{2(1-a)}}}$. In this case, we only need to take the optimal $\e$:
\begin{align*}
 \left|\iint_{y^{\frac{a}{2(1-a)}}< F(t,s)} \right.&\left.g\varphi_T(t) \overline{g\varphi_T}(s) \int e^{-i\Psi(\xi)}(1+\xi^2)^{-\sigma}d\xi dtds\right|\\
&\leq \iint_{y^{\frac{a}{2(1-a)}}< F(t,s)} |g\varphi_T(t)| |\overline{g\varphi_T}(s)| \frac{|\xi_0|^{1-a/2}}{(1+\xi_0^2)^\sigma |t-s|^{1/2}}ds dt\\
&\leq \frac{C(yT^{1/a -1})^{(2-a)/2(a-1)}}{(1+y^{2/a-1}T^{-2/a})^\sigma} \|g\varphi_T\|^2_{L^{4/3}}\\
&\leq \frac{Cy^{(2-a)/2(a-1)}T^{-(2-a)/2a}T^{1/2}}{(1+y^{2/a-1}T^{-2/a})^\sigma}\|g\varphi_T\|^2_{L^{2}}\\
&\leq C y^{\frac{2-a-4\sigma}{2(a-1)}}T^{\frac{a-1+2\sigma}{a}}\|g\|_{L^2}^2.
\end{align*}

We combine all of these terms to see that for $1-a\leq2\sigma<1$:
\begin{equation*}
\int (1+\xi^2)^{-\sigma} |\T(g\varphi_T)(\xi)|^2d\xi \leq Cy^{\frac{1-a-2\sigma}{a-1}}T^{\frac{a-1+2\sigma}{a}}\left(1 + y^{\frac{a}{2(a-1)}}\right) \|g\|_{L^2}^2. 
\end{equation*}

\end{proof}

\subsubsection{Remarks on the proofs of \S \ref{pf1} and \S \ref{pf2}}

The proof of Lemma \ref{Tthm} for the case $a>1$ differs in a couple key ways, but otherwise follows the same general argument. First of all, the weight in the kernel is replaced by $|\xi|^{2\sigma}$. We need the homogeneous weight here because $1/\Psi'(\xi)$ at $0$ is not equal to zero, and therefore we accumulate additional powers of $y$ and $T$ either by evaluating the derivative at $0$, or from the kernel with the exponential that we worked so hard to avoid in the case $0<a<1$. Unfortunately, the acceptable range of $\sigma$ means we cannot prove Proposition \ref{inftydecay} when $1<a<2$. 

That said, the proof above for Proposition \ref{inftydecay} is also correct when $2 \leq a$. The steps in proof of Proposition \ref{Tthm} are the same, except the various bounds on the kernels are subtly different.  

\subsection{Optimality and Counterexamples}\label{subsec:oandc}

In Proposition \ref{inftydecay}, the factor of $|y|$ acts as a barrier to our optimal time decay rate. In the following results, we explore the precise nature of this impediment. There are several different ways to consider the singularity that appears. Firstly, we will look along slightly different trajectories and find a lower bound (enforcing the optimality of our results), however this results imposes strong conditions on the initial data. 
 
\subsubsection{Lower bounds}
 Instead of considering $u(t,yt^{1/a})$, we will look at $u(t,yt)$ which will simplify our calculations considerably. Since \[\half{\left(\int_T^{2T}|u(t,zt)|^2dt\right)}\leq \sup_{[T,2T]}|u(t,zt)|\] without the need for our Sobolev lemma, this choice makes sense. Let \[S'f(t) = \int e^{i(yt\xi+t|\xi|^a)} \widehat{f}(\xi)d\xi\] and \[T'g(\xi) = \int e^{-i(yt\xi+t|\xi|^a)}g(t)\varphi(t)dt\] where $\varphi$ is a positive function with compact support such that $\varphi^2 \leq \chi_{[T,2T]}$.  We will show precisely the following 
\begin{thm}
Choose initial data $u_0$ such that $\widehat{u}_0(\xi) = |\xi|^{a-1}\widehat{g\varphi}(y\xi+|\xi|^a)$ with $\|g\|_{L^2}=1$ and $\varphi\in C_0^\infty(\R)$ such that $0\leq \varphi\leq \chi_{[T,2T]}(t)$. Then, 
\begin{equation*}
\left(\int_T^{2T}|u(t,yt)|^2dt\right)^{1/2}\geq C'(a)|y|^{a/4(a-1)}C(g)\|u_0\|_{\dot{H}^{(1-a)/2}} 
\end{equation*} where the constant $C(g)$ is explicitly \[C(g)^2 =\begin{cases} 
\displaystyle{\int}^{(a-1)|y/a|^{a/(a-1)}}_0 |\zeta|^{-1/2}|\widehat{g\varphi}(\zeta-(a-1)|y/a|^{a/(a-1)})|^2d\zeta&1<a\\
\displaystyle{\int}_{(a-1)|y/a|^{a/(a-1)}}^0 |\zeta|^{-1/2}|\widehat{g\varphi}(\zeta+(1-a)|y/a|^{a/(a-1)})|^2d\zeta&0<a<1\end{cases}\] \end{thm}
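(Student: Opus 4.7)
The plan is a duality argument combined with a Morse-type change of variables localized at the unique critical point of the phase $\phi(\xi):=y\xi+|\xi|^a$. Taking $y>0$ (the opposite sign is symmetric), $\phi$ has a unique critical point $\xi_0=-(y/a)^{1/(a-1)}$ with critical value $\phi(\xi_0)=-(a-1)(y/a)^{a/(a-1)}$---a minimum when $a>1$, a local maximum when $0<a<1$. The identity $|\xi_0|^{a/2}=|y/a|^{a/(2(a-1))}$ is what will produce the anticipated $|y|^{a/(4(a-1))}$ factor.

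First, since $\varphi^2\leq\chi_{[T,2T]}$ gives $\|g\varphi\|_{L^2}\leq\|g\|_{L^2}=1$, the duality from Section~\ref{pf2} yields
\[\left(\int_T^{2T}|u(t,yt)|^2\,dt\right)^{1/2}\;\geq\;\|(S'u_0)\varphi\|_{L^2_t}\;\geq\;|\langle S'u_0,g\varphi\rangle|.\]
By Plancherel and the identity $T'(g\varphi)(\xi)=\widehat{g\varphi}(\phi(\xi))$, this pairing equals $\int\widehat{u_0}(\xi)\,\overline{\widehat{g\varphi}(\phi(\xi))}\,d\xi$. Inserting the prescribed $\widehat{u_0}(\xi)=|\xi|^{a-1}\widehat{g\varphi}(\phi(\xi))$ makes it equal to $I:=\int|\xi|^{a-1}|\widehat{g\varphi}(\phi(\xi))|^2\,d\xi\geq 0$. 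A separate computation gives $\|u_0\|_{\dot{H}^{(1-a)/2}}^2=\int|\xi|^{1-a}|\widehat{u_0}|^2\,d\xi=I$, hence $\|u_0\|_{\dot{H}^{(1-a)/2}}=I^{1/2}$. Writing $\sqrt{\int_T^{2T}|u|^2}\geq I=I^{1/2}\cdot\|u_0\|_{\dot{H}^{(1-a)/2}}$ reduces the theorem to the single inequality
\[I\;\geq\;C(a)\,|y|^{a/(2(a-1))}\,C(g)^2.\]

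For this lower bound on $I$, I restrict the $\xi$-integral (nonnegative integrand) to the monotone branch $\xi\in(-\infty,\xi_0)$. Writing $t:=|\xi|/|\xi_0|\in(1,\infty)$, direct computation gives
\[\frac{|\xi|^{a-1}}{|\phi'(\xi)|}\;=\;\frac{t^{a-1}}{a\,|t^{a-1}-1|},\qquad \phi(\xi)-\phi(\xi_0)\;=\;|\xi_0|^a\,h(t),\]
with $h(t):=a-at+t^a-1$ satisfying $h(1)=h'(1)=0$ and $h''(1)=a(a-1)$. Restricting further so that $\phi(\xi)$ sweeps out exactly the $\eta$-interval appearing in $C(g)^2$ corresponds to $t\in[1,\,a^{1/(a-1)}]$. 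With $\zeta:=|\phi(\xi)-\phi(\xi_0)|=|\xi_0|^a|h(t)|$ ranging over $(0,\,|a-1|\,|\xi_0|^a)$, the claim reduces to the pointwise Jacobian bound
\[\frac{|\xi|^{a-1}}{|\phi'(\xi)|}\;\geq\;c(a)\,|\xi_0|^{a/2}\,\zeta^{-1/2},\]
which, after dividing through, is equivalent to a positive lower bound on $G(t):=t^{a-1}\sqrt{|h(t)|}/|t^{a-1}-1|$ over $[1,\,a^{1/(a-1)}]$. Taylor expansion at $t=1$ gives the removable-singularity value $G(1^+)=\sqrt{a/(2|a-1|)}$; at the other endpoint $t^{a-1}=a$ and $|h|=|a-1|$ give $G=a/\sqrt{|a-1|}$; both are positive, and $G$ is continuous and strictly positive in the interior. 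Hence $G$ attains a positive minimum $a\,c(a)>0$, and integrating the resulting pointwise bound against $|\widehat{g\varphi}|^2$ produces $I\geq c(a)\,|\xi_0|^{a/2}\,C(g)^2=c(a)\,a^{-a/(2(a-1))}\,|y|^{a/(2(a-1))}\,C(g)^2$, as required, with $C'(a)=\sqrt{c(a)}\,a^{-a/(4(a-1))}$.

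The main obstacle is the branch selection: the apparently more natural branch $(\xi_0,0)$ produces a Jacobian $t^{a-1}/[a(1-t^{a-1})]$ for $t\in(0,1)$ that vanishes at $\xi=0$ (equivalently at $\zeta=|a-1|\,|\xi_0|^a$), so no uniform pointwise inequality of the desired form can hold there. The outer branch $(-\infty,\xi_0)$ avoids this defect because, on the relevant sub-interval $t\in[1,\,a^{1/(a-1)}]$, the only singularity of the Jacobian is precisely the $\zeta^{-1/2}$ blow-up at the critical point. Verifying the positive lower bound on $G(t)$---and in particular handling the removable singularity at $t=1$ correctly---is the technical heart of the proof.
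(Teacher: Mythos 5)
Your proof is correct and produces exactly the stated lower bound, but the mechanics of the Jacobian step differ from the paper's in a worthwhile way. The paper first proves Lemma~\ref{keysharp}, which rewrites $|\xi|^{a-1}=\frac{\sgn\xi}{a}(\phi'(\xi)-y)$ to replace the weight by the constant $\tfrac{y}{a}$ on the full parabolic interval $(-y^{1/(a-1)},0)$, then performs a Morse-type square-root change of variables $\eta(\xi)=\xi_1+\sgn(\xi-\xi_1)\sqrt{\phi(\xi)+\Xi_1}$ on both sides of the critical point, bounds the Jacobian $J=d\xi/d\eta$ uniformly below via the monotonicity analysis in Lemma~\ref{Jacob} (the appendix), and only afterwards substitutes $\zeta=\eta^2$ to expose the $\zeta^{-1/2}$. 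You instead drop the algebraic preprocessing, keep the raw weight $|\xi|^{a-1}$, restrict to the outer half-branch $\xi\in[-y^{1/(a-1)},\xi_0]$ (equivalently $t\in[1,a^{1/(a-1)}]$) where that weight stays comparable to $|\xi_0|^{a-1}$, make a single direct change of variables $\zeta=\phi(\xi)-\phi(\xi_0)$, and bound the resulting Jacobian ratio $|\xi|^{a-1}/|\phi'(\xi)|$ against $|\xi_0|^{a/2}\zeta^{-1/2}$ by a continuity--compactness argument on the homogeneous function $G(t)=t^{a-1}\sqrt{|h(t)|}/|t^{a-1}-1|$. Your observation that the inner half-branch $(\xi_0,0)$ cannot support a pointwise bound of this form (because $|\xi|^{a-1}\to 0$ at $\xi=0$ when $a>1$) is accurate and explains why the paper had to first trade $|\xi|^{a-1}$ for $\tfrac{y}{a}$ before it could use the full interval. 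Your route is more elementary, avoids the appendix entirely, and is exactly sufficient for Theorem~\ref{sharp}; what you give up is the extra lower-order term $\int\Psi(\zeta)|\widehat{g}(\zeta)|^2d\zeta$ that Lemma~\ref{keysharp} records alongside the growth factor, which the paper keeps but the theorem itself does not use.
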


This proof relies heavily on the fact that $\T'$ above is the Fourier transform up to a constant. Since this operator has this nice property, we will show a lower bound for $\int |\xi|^{a-1}|T'g(\xi)|^2d\xi$ in the following proposition

\begin{lem}\label{keysharp}
Without loss of generality, assume that $y>0$. Let $\xi_1 = -\left(y/a\right)^{1/(a-1)}$ and $\Xi_1 = (a-1)|\xi_1|^a$. For $g\in L^2\bigcap H^{(a-1)/2}$ and $\varphi\in C_0^\infty$ with $0\leq\varphi\leq \chi_{[T,2T]}$, we have the following inequalities:
\begin{enumerate}
\item When $1<a$,
\begin{align}
\int |\xi|^{a-1}|\widehat{g\varphi}(y\xi+|\xi|^a)|^2d\xi\geq \frac{1}{a}&\int_0^\infty \Psi(\zeta)|\widehat{g}(\zeta)|^2d\zeta \\
&+ C(a)|y|^{\frac{a}{2(a-1)}}\int^{\Xi_1}_0 \mhalf{|\zeta|}|\widehat{g\varphi}(\zeta-\Xi_1)|^2d\zeta\nonumber 
\end{align}
 where $\Psi(\zeta)$ is bounded by $\frac{1}{a-1}$ at 0 and tends to $2/a$ as $\zeta$ goes to infinity.
\item When $0<a<1$,  
\begin{align}
\int |\xi|^{a-1}|\widehat{g\varphi}(y\xi+|\xi|^a)|^2d\xi\geq \int& \Phi(\zeta)|\widehat{g\varphi}(\zeta)|^2d\zeta\\
& + C(a)|y|^{\frac{a}{2(a-1)}}\int_{\Xi_1}^0 \mhalf{|\zeta|}|\widehat{g\varphi}(\zeta-\Xi_1)|^2d\zeta \nonumber
\end{align}
 where $\Phi(\zeta)$ is bounded at 0 by $max\{1/a, 1/(1-a)\}$ and decays like $y^{-a}|\zeta|^{a-1}$. 
\end{enumerate}
\end{lem}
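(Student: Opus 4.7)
The plan is to reduce the integral to one over the phase variable $\zeta = y\xi + |\xi|^a$ by changing variables on each interval on which $\eta(\xi) := y\xi + |\xi|^a$ is monotone. Without loss of generality $y > 0$; then $\eta'(\xi) = y + a|\xi|^{a-1}\sgn\xi$ vanishes only at $\xi_1 = -(y/a)^{1/(a-1)}$, with $\eta(\xi_1) = -\Xi_1$, and this partitions $\R$ into three monotone pieces $I_+ = (0,\infty)$, $I_0 = (\xi_1,0)$, $I_- = (-\infty,\xi_1)$. Whether $\xi_1$ is a local minimum ($a > 1$, $\Xi_1 > 0$) or a local maximum ($a < 1$, $\Xi_1 < 0$) of $\eta$ determines the sign of $\Xi_1$ and the side of zero on which the critical-point singularity lies, but the structure of the argument is otherwise identical.

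On $I_+$ and $I_-$, $\eta$ is a diffeomorphism onto an unbounded interval, so the substitution produces integrals of the form $\int (|\xi_\pm(\zeta)|^{a-1}/|\eta'(\xi_\pm(\zeta))|)|\widehat{g\varphi}(\zeta)|^2 d\zeta$ on the respective images $\eta(I_\pm)$. On the overlap of these two images, the sum of the two densities forms the $\Psi(\zeta)$ (respectively $\Phi(\zeta)$) term in the lemma. The asserted endpoint behavior then follows from elementary calculus on the inverse branches: for example when $a > 1$, only the branch $\xi_-$ contributes at $\zeta = 0$, where $|\xi_-|^{a-1} = y$, giving the density $a/(a-1)$ and hence $1/(a-1)$ after the $1/a$ prefactor; as $\zeta \to \infty$ both branches satisfy $a|\xi_\pm|^{a-1}/|y \pm a|\xi_\pm|^{a-1}| \to 1$, giving the $2/a$ asymptotic. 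The $a < 1$ case is parallel but the overlap region and the dominant term differ.

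The substantive step is the contribution from $I_0$, which supplies the growth factor. Setting $\tilde\zeta = \zeta + \Xi_1 = \eta(\xi) - \eta(\xi_1)$ and using $\eta'(\xi_1) = 0$ together with $\eta''(\xi_1) = a(a-1)|\xi_1|^{a-2}$, a Taylor expansion gives $|\xi - \xi_1| \sim \sqrt{2|\tilde\zeta|/|\eta''(\xi_1)|}$ and $|\eta'(\xi)| \sim \sqrt{2|\eta''(\xi_1)||\tilde\zeta|}$ near $\xi_1$, while $|\xi|^{a-1}$ stays comparable to $|\xi_1|^{a-1} = y/a$. Combined with the identity $(y/a)|\xi_1|^{(2-a)/2} = (y/a)^{a/(2(a-1))}$, this yields the Jacobian bound
\[
\frac{|\xi|^{a-1}}{|\eta'(\xi)|} \;\geq\; \frac{C(a)\,(y/a)^{a/(2(a-1))}}{|\tilde\zeta|^{1/2}}
\]
in a neighborhood of $\xi_1$ inside $I_0$. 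Substituting $\tilde\zeta = \zeta + \Xi_1$ in the $I_0$-integral then produces the advertised growth term $C(a)|y|^{a/(2(a-1))}\int |\zeta|^{-1/2}|\widehat{g\varphi}(\zeta - \Xi_1)|^2 d\zeta$.

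The main obstacle will be upgrading the near-$\xi_1$ asymptotic to a lower bound valid on a sufficiently large portion of $I_0$, since at the far endpoint $\xi = 0$ the Jacobian degenerates. I plan to rescale to the dimensionless variable $u = |\xi|/|\xi_1|$, giving $\tilde\zeta = |\xi_1|^a h(u)$ with $h(u) = a - au + u^a - 1$ and $\eta'(\xi) = a|\xi_1|^{a-1}(1 - u^{a-1})$. The desired bound then reduces to the scale-invariant inequality $u^{a-1}|h(u)|^{1/2}/|1 - u^{a-1}| \gtrsim_a 1$, which holds on a fixed neighborhood of $u = 1$ by continuity together with the explicit limit $1/\sqrt{2a|a-1|}$ at $u = 1$. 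The length of this neighborhood can be absorbed into $C(a)$, and the corresponding restriction of the $|\zeta|^{-1/2}$ integration range will suffice for the downstream application to Theorem \ref{sharp}.
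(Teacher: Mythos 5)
Your approach---a direct change of variables to $\zeta = y\xi + |\xi|^a$ on each monotone branch---is genuinely different from the paper's, and it runs into a degeneracy that you partly notice but do not actually resolve. The paper first uses the algebraic identity $|\xi|^{a-1} = \frac{\sgn\xi}{a}\left(\frac{d}{d\xi}(y\xi + |\xi|^a) - y\right)$ to pull the weight $|\xi|^{a-1}$ apart: the derivative part is a perfect differential and integrates out cleanly to $\frac{2}{a}\int_0^\infty |\widehat{g\varphi}(\zeta)|^2\,d\zeta$, while the remaining critical-point piece carries the \emph{constant} weight $y/a$ rather than $|\xi|^{a-1}$. It is only because of that constant weight that the subsequent square-root substitution produces a Jacobian $J(\xi)$ which Proposition \ref{Jacob} bounds below \emph{uniformly} on the full interval $(-y^{1/(a-1)},0)$, so the full-range integral $\int_0^{\Xi_1}|\zeta|^{-1/2}|\widehat{g\varphi}(\zeta-\Xi_1)|^2\,d\zeta$ claimed in the lemma actually appears.

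In your version, the density on $I_0 = (\xi_1,0)$ is $|\xi|^{a-1}/|\eta'(\xi)| = u^{a-1}/\bigl(a|1-u^{a-1}|\bigr)$ with $u = |\xi|/|\xi_1|$, and the needed pointwise bound $\gtrsim_a |\tilde\zeta|^{-1/2}$ fails as $u\to 0^+$ (i.e.\ $\zeta\to 0^-$): the left side vanishes while the right side tends to the nonzero constant $h(0)^{-1/2} = |a-1|^{-1/2}$. Your proposed repair---restricting to a fixed $u$-neighborhood of $1$---proves a strictly weaker statement, with the range $(0,\Xi_1)$ replaced by $(0,c(a)\Xi_1)$ for some $c(a)<1$; the assertion that this ``suffices for the downstream application'' to Theorem \ref{sharp} is itself unverified, and in any event the lemma as stated is not proved. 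Two ways to close the gap: adopt the paper's integration-by-parts reduction so the critical-point piece carries the constant weight $y/a$; or note that the branch $I_- = (-\infty,\xi_1)$ also maps onto $(-\Xi_1,0)$, with density $v^{a-1}/\bigl(a(v^{a-1}-1)\bigr)$ (for $v = |\xi|/|\xi_1|>1$) which stays bounded away from zero there, and show that the \emph{sum} of the $I_0$ and $I_-$ densities at each $\zeta\in(-\Xi_1,0)$ dominates $C(a)|\tilde\zeta|^{-1/2}$ on the whole interval---but that is a nontrivial global inequality you would still have to establish.
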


\begin{proof}[Proof of Theorem \ref{sharp}] 
To complete the proof of Theorem \ref{sharp}, we combine the results of Lemma \ref{keysharp} with an inner product. Let $\varphi\in C_0^\infty(\R)$ such that $0\leq \varphi\leq \chi_{[T,2T]}(t)$. Therefore, 
\begin{align*}
\left(\int_T^{2T}\right. &\left. \vphantom{\int_T^{2T}}|u(t,yt)|^2dt\right)^{\frac{1}{2}} \geq\half{\left(\int|S'u_0 (t) \varphi(t)|^2dt\right)}\\
 = &\sup_{\|h\|_{L^2}=1}|\langle S'u_0(t) \varphi(t),h(t)\rangle|\\
=& \sup_{\|h\|_{L^2} =1}\left|\int \widehat{u_0}(\xi) \widehat{h\varphi}(y\xi+|\xi|^a)d\xi\right|\\
\geq& \int |\xi|^{a-1}|\widehat{g\varphi}(y\xi+|\xi|^a)|^2 d\xi\\
\geq& \half{\left(\int |\xi|^{1-a}|\widehat{u_0}(\xi)|^2 d\xi\right)} \half{\left(C(a)y^{\frac{a}{2(a-1)}}\int^{\Xi_1}_0 |\zeta|^{-1/2}|\widehat{g\varphi}(\zeta-\Xi_1)|^2d\zeta\right)}\\
 =& C'(a)y^{a/4(a-1)}\|u_0\|_{\dot{H}^{(1-a)/2}}\half{\left(\int^{\Xi_1}_0 |\zeta|^{-1/2}|\widehat{g\varphi}(\zeta-\Xi_1)|^2d\zeta\right)}.
\end{align*}
The constant in $g\varphi$ is controlled (loosely) by $\|g\|_{L^{4/3}}$. 
\end{proof}

\begin{proof}[Proof of Lemma \ref{keysharp}]
We will treat the cases $a>1$ and $0<a<1$ separately.

\begin{case}[ $a>1$]

Notice that $|\xi|^{a-1} = \dfrac{\sgn\xi}{a}\left(\dfrac{d}{d\xi} \left( y\xi +|\xi|^a\right) - y\right)$, so we can change variables:
\begin{align*}
\int |\xi|^{a-1}&|\widehat{g\varphi}(y\xi+|\xi|)^a|^2d\xi= \int\dfrac{\sgn\xi}{a}\left(\dfrac{d}{d\xi} \left( y\xi +|\xi|^a\right) - y\right)|\widehat{g\varphi}(y\xi+|\xi|)^a|^2d\xi\\
&=\int_0^\infty \frac{\psi'(\xi)}{a}|\widehat{g\varphi}(\psi(\xi))|^2d\xi - \int_{-\infty}^0 \frac{\psi'(\xi)}{a}|\widehat{g\varphi}(\psi(\xi))|^2d\xi\\
&\hspace{.25in} -\frac{y}{a}\int\sgn\xi|\widehat{g\varphi}(y\xi+|\xi|^a)|^2d\xi \\
&=\frac{2}{a}\int_0^\infty |\widehat{g\varphi}(\zeta)|^2d\zeta -\frac{y}{a}\int\sgn\xi|\widehat{g\varphi}(y\xi+|\xi|^a)|^2d\xi.
\end{align*}
On the intervals $(0,\infty)$ and $(-\infty, -y^{\frac{1}{a-1}})$, $y+a|\xi|^{a-1}\sgn\xi\neq 0$, so we can change variables again as long as we keep track of the derivative factor: 
\[-\frac{y}{a}\int_0^\infty |\widehat{g\varphi}(y\xi+|\xi|^a)|^2\frac{y+a|\xi|^{a-1}\sgn\xi}{y+a|\xi|^{a-1}\sgn\xi}d\xi = -\int_0^\infty \frac{y}{a}\Phi_+(\zeta)|\widehat{g\varphi}(\zeta)|^2d\zeta.\] Observe that $\frac{y}{a}\Phi_+(\zeta)$ is bounded at 0 by $1/a$ and decays like $|\zeta|^{-(a-1)/a}$ at $\infty$ since $\zeta \sim |\xi|^a$ for large $\xi$. Similarly, on $(-\infty, -y^{\frac{1}{a-1}})$, we have:
\[\frac{y}{a}\int_{-\infty}^{-y^{\frac{1}{a-1}}} |\widehat{g\varphi}(y\xi+|\xi|^a)|^2\frac{y+a|\xi|^{a-1}\sgn\xi}{y+a|\xi|^{a-1}\sgn\xi}d\xi = \int_0^\infty \frac{y}{a}\Phi_-(\zeta)|\widehat{g\varphi}(\zeta)|^2d\zeta\] where $\frac{y}{a}\Phi_-(\zeta)$ is bounded as $\zeta \rightarrow 0^+$ by $1/(a(a-1))$ and decays like  $|\zeta|^{-(a-1)/a}$ at positive infinity. Combining all the inequalities so far we find:
\begin{align}
\int |\xi|^{a-1}|\widehat{g\varphi}(y\xi+|\xi|)^a|^2d\xi = \int_0^\infty& \left(\frac{2}{a} -\frac{y}{a}\Phi_+(\zeta) +\frac{y}{a}\Phi_-(\zeta)\right)|\widehat{g\varphi}(\zeta)|^2d\zeta \\
&+\frac{y}{a}\int_{-y^{\frac{1}{a-1}}}^0 |\widehat{g\varphi}(y\xi+|\xi|^a)|^2d\xi. \nonumber
\end{align}
  The first two terms on the right hand side are precisely the lower bounds given in the Proposition where $\Psi = \frac{2}{a} -\frac{y}{a}\Phi_+(\zeta) +\frac{y}{a}\Phi_-(\zeta)$. The growth factor in $y$ arises from the remaining term. 

In the interval $(-y^{\frac{1}{a-1}},0)$, the function $y\xi+|\xi|^a$ is nearly parabolic, so the natural change of variables is $y\xi+|\xi|^a +(a-1)|\xi_1|^a = (\eta-\xi_1)^2$. In particular, we will take $\eta = \eta(\xi) = \xi_1 +\sgn(\xi-\xi_1) \sqrt{y\xi+|\xi|^a +\Xi_1}$ so that $\dfrac{y-a|\xi|^{a-1}}{2(\eta-\xi_1)}\geq 0$. Let $J(\xi) = \dfrac{2(\eta(\xi)-\xi_1)}{y-a|\xi|^{a-1}}$. Then if we let introduce $\eta(\xi)$, we have:
\[\frac{y}{a}\int_{-y^{\frac{1}{a-1}}}^0|\widehat{g\varphi}(y\xi+|\xi|^a)|^2d\xi=\frac{y}{a}\int_{-y^{\frac{1}{a-1}}}^0 J(\xi)|\widehat{g\varphi}((\eta(\xi)-\xi_1)^2-\Xi_1)|^2\dfrac{d\xi}{J(\xi)}. \]
We will show that $J(\xi)$ is bounded below precisely by $C(a)y^{-1} |\xi_1|^{a/2}$ in Proposition \ref{Jacob}. Now, we use this bound and then change variables:
\begin{align*}
\frac{y}{a}\int_{-y^{\frac{1}{a-1}}}^0|\widehat{g\varphi}(y\xi+|\xi|^a)|^2d\xi \geq & C(a)y^{\frac{a}{2(a-1)}}\int_{-y^{\frac{1}{a-1}}}^0|\widehat{g\varphi}((\eta(\xi)-\xi_1)^2-\Xi_1)|^2\dfrac{d\xi}{J(\xi)}\\
=& C(a)y^{\frac{a}{2(a-1)}}\int_{\xi_1 -\sqrt{\ldots}}^{\xi_1+\sqrt{\ldots}} |\widehat{g\varphi}((\eta-\xi_1)^2-\Xi_1)|^2d\eta\\ 
=&C(a)y^{\frac{a}{2(a-1)}}\int_{-\sqrt{\Xi_1}}^{+\sqrt{\Xi_1}} |\widehat{g\varphi}(\eta^2-\Xi_1)|^2d\eta\\
=&2 C(a)y^{\frac{a}{2(a-1)}}\int_{0}^{+\sqrt{\Xi_1}} \frac{1}{2\eta}|\widehat{g\varphi}(\eta^2+\Xi_1)|^2 2\eta d\eta\\
=&C(a)y^{\frac{a}{2(a-1)}}\int_0^{\Xi_1} \zeta^{-1/2}|\widehat{g\varphi}(\zeta+\Xi_1)|^2 d\zeta
\end{align*} where the final step is the substitution $\zeta = \eta^2$.
Therefore, we have our lower bound in the case $a>1$.
\end{case}
\begin{case}[$1>a>0$]

The argument in this case follows the previous case, but we omit the initial change of variable. \begin{align*}
\int |\xi|^{a-1}|\widehat{g\varphi}(y\xi+|\xi|^a)|^2d\xi =& \int_0^\infty \hspace{-.125in}|\xi|^{a-1}|\widehat{g\varphi}(y\xi+|\xi|^a)|^2d\xi \\
&+ \int_{-\infty}^{-y^{\frac{1}{a-1}}}\hspace{-.25in} |\xi|^{a-1}|\widehat{g\varphi}(y\xi+|\xi|^a)|^2d\xi\\&+ \int_{-y^{\frac{1}{a-1}}}^0 |\xi|^{a-1}|\widehat{g\varphi}(y\xi+|\xi|^a)|^2d\xi\\
=&\int_0^\infty \frac{(y+a|\xi|^{a-1})}{y|\xi|^{1-a}+a}|\widehat{g\varphi}(y\xi+|\xi|^a)|^2d\xi\\
 &+ \int_{-\infty}^{-y^{\frac{1}{a-1}}} \frac{(y-a|\xi|^{a-1})}{y|\xi|^{1-a}-a}|\widehat{g\varphi}(y\xi+|\xi|^a)|^2d\xi\\
&+ \int_{-y^{\frac{1}{a-1}}}^0 |\xi|^{a-1}|\widehat{g\varphi}(y\xi+|\xi|^a)|^2d\xi\\
=& \int\Phi(\zeta)|\widehat{g\varphi}(\zeta)|^2d\eta + \int_{-y^{\frac{1}{a-1}}}^0 |\xi|^{a-1}|\widehat{g\varphi}(y\xi+|\xi|^a)|^2d\xi
\end{align*} where $\Phi(\zeta)$ tends to $1/a$ as $\zeta\rightarrow 0^+$, $1/(1-a)$  for $\zeta\rightarrow 0^-$ and decays like $y^{-a}|\zeta|^{a-1}$ as $|\zeta|\rightarrow\infty$. In fact, the first term is bounded below by the $H^{(a-1)/2}$ norm (notice this is the inhomogeneous Sobolev space) and above by $\max\{1/a, 1/(1-a)\}$ times the the $L^2$ norm of $g$. 

In the interval $(-y^{\frac{1}{a-1}},0)$, the function $y\xi+|\xi|^a$ is nearly parabolic, so the natural change of variables is $y\xi+|\xi|^a +\Xi_1 = -(\eta-\xi_1)^2$. In particular, we will take $\eta = \eta(\xi) = \xi_1 -\sgn(\xi-\xi_1) \sqrt{|\Xi_1|-y\xi+|\xi|^a}$ so that $\dfrac{y-a|\xi|^{a-1}}{2(\eta-\xi_1)}\geq 0$. Let $\mathcal{J}(\xi) = \dfrac{2(\eta(\xi)-\xi_1)}{y|\xi|^{1-a}-a}$. Then we have:
\[\int_{-y^{\frac{1}{a-1}}}^0|\xi|^{a-1}|\widehat{g\varphi}(y\xi+|\xi|^a)|^2d\xi=\int_{-y^{\frac{1}{a-1}}}^0\mathcal{J}(\xi)|\widehat{g\varphi}(-(\eta(\xi)-\xi_1)^2+|\Xi_1|)|^2\dfrac{d\xi}{\mathcal{J}(\xi)}. \]
 We will show that $\mathcal{J}\geq C(a)y^{\frac{a}{2(a-1)}}$ in proposition \ref{Jacob}. If we change variables in the remaining term: 
\begin{align*}
\frac{y}{a}\int_{-y^{\frac{1}{a-1}}}^0&|\widehat{g\varphi}(y\xi+|\xi|^a)|^2d\xi \\
\geq& C(a)y^{\frac{a}{2(a-1)}}\int_{-y^{\frac{1}{a-1}}}^0|\widehat{g\varphi}(-(\eta(\xi)-\xi_1)^2+|\Xi_1|)|^2\dfrac{(y-a|\xi|^{a-1})d\xi}{2(\phi(\xi)-\xi_1)}\\
=&C(a)y^{\frac{a}{2(a-1)}}\int_{\xi_1 -\sqrt{\ldots}}^{\xi_1+\sqrt{\ldots}}|\widehat{g\varphi}(-(\eta-\xi_1)^2+|\Xi_1|)|^2d\eta\\ 
=&C(a)y^{\frac{a}{2(a-1)}}\int_{-\sqrt{|\Xi_1|}}^{+\sqrt{|\Xi_1|}} |\widehat{g\varphi}(-\eta^2+|\Xi_1|^a)|^2d\eta\\
=&2C(a)y^{\frac{a}{2(a-1)}}\int_0^{\sqrt{|\Xi_1|}} \frac{1}{-2\eta}|\widehat{g\varphi}(-\eta^2+|\Xi_1|)|^2 (-2\eta) d\eta\\
 =&2C(a)y^{\frac{a}{2(a-1)}}\int_{\Xi_1}^0 \frac{1}{2|\zeta|^{1/2}}|\widehat{g\varphi}(\zeta - \Xi_1)|^2 d\zeta
\end{align*} where the final step is the substiution $\zeta = -\eta^2$.
Therefore, for the case $0<a<1$, we have proved our lower bounds. 
\end{case}
\end{proof}
\subsubsection{Final remarks}
It is worth noting that in the case $a=2$ and $a=1/2$, we can get precisely equality, rather than inequality. The Jacobian bounds that we proved in the previous section are not necessary in these two cases, as $J(\xi)$ and $\mathcal{J}(\xi)$ are constant in the case $a=2$ an $a=1/2$, respectively. In the case $a=2$ and $a=1/2$, respectively, we have:
\[\int |\xi| |\widehat{g\varphi}(y\xi+\xi^2)|^2 d\xi = \int_0^\infty|\widehat{g\varphi}(\zeta)|^2d\zeta +\frac{y}{2}\int_0^{y^2/4}|\zeta|^{-1/2}|\widehat{g\varphi}(\zeta - y^2/4)|^2d\zeta \]
and 
\[\int |\xi|^{-1/2} |\widehat{g\varphi}(y\xi+|\xi|^{1/2})|^2 d\xi = \int\frac{|\widehat{g\varphi}(\zeta)|^2d\zeta}{\half{(1/4 +y|\zeta|)}} +y^{-1/2}\int_{-(4y)^{-1}}^0|\widehat{g\varphi}(\zeta +(4y)^{-1})|^2\frac{d\zeta}{\half{|\zeta|}}.\]

In order to understand where precisely this undesirable decay in Theorem \ref{growth} is coming from in the case of the linearized water wave, we bound $u(t,zt)$ below by 
\[\frac{1}{\half{T}}\half{\left(\int_T^{2T}|u(t,zt)|^2dt\right)}\leq \sup_{[T,2T]}|u(t,zt)|.\] By understanding the left hand side of this inequality, we can see the precise nature of the growth factor in Proposition \ref{KEY}. From the explicit bound on the analogue to operator $\T$, we get
\begin{equation}
 \int_{-\infty}^\infty \mhalf{|\xi|}|\widehat{g\vphi}(z\xi+\half{|\xi|})|^2d\xi = \int_{-\infty}^\infty \frac{|\widehat{g\vphi}(\zeta)|^2}{\half{\left(\frac{1}{4}+|z|\zeta\right)}}d\zeta +2\int_0^{\frac{1}{4|z|}} \frac{|\widehat{g\vphi}(\zeta)|^2}{\half{\left|\frac{1}{4}-|z|\zeta \right|}}d\zeta
\end{equation}
 The singularity appears only in the second term at $1/4|z|$, suggesting that as $|z|$ (or $|y|$) gets small, the problem with the decay exists only at high frequencies. Since existing results for the water wave problem 

\section{The Linearized Water Wave Problem}\label{sec:lin}

The results from subsection \ref{subsec:oandc} suggest that the problematic regions for Theorem \ref{growth} are for initial data away from the origin in frequency. On the other hand, previously mentioned existing results such as \cite[Proposition 3.1]{Wu2009}) are, in some sense, only problematic for initial data with a contribution from low frequencies. We combine these results and show an improved decay rate for solutions of the linearized water wave problem by imposing further bounds on the initial data. 

Recall that we have reduced the problem to the interface, and the linearized form of the initital value problems is 
\begin{equation}\label{linWW}
 \left\{\begin{array}{l}
\partial_t^2 u +|D|u =0\\
u(0,x) = u_0(x)\\
u_t(0,x) = u_1(x).
        \end{array}
\right.
\end{equation}
To contextualize these results, we will draw precise comparisons with the existing work in \cite{Wu2009}.

\subsection{Comparison to existing results}
Where previous work was concerned with the removal of the troublesome quantity $\Omega_0$, it is advantageous to reintroduce it here. Let $\Gamma = \{\partial_t, \partial_x, L = \frac{t}{2}\partial_t+x\partial_x\}$ and $\Omega = x\partial_t +\frac{t}{2}\partial_x|D|^{-1}$. The following proposition is analogous to \cite[Proposition 3.1]{Wu2009} and is proved using techniques similar to those used by Klainerman for the wave equation. While $\Gamma$ and $\Omega$ here are specific to the case of the linearized water wave problem, 
a similar collection of vector fields exists for \eqref{linWW} and the proposition can be generalized to this class of equations. We focus on the linearized water wave problem for the time being.

\begin{lem}\label{Omega}
Let $u(t,x)$ be any real-valued function which decays at infinity. Then, for a multiindex $k = \{k_1, k_2, k_3\}$ and $\Gamma^k =\partial_t^{k_1} \partial_x^{k_2}L^{k_3} $, we have:
\begin{equation}
 |u(t,x)|\leq \frac{C}{\half{t}}\left(\sum_{1\leq |k|\leq 2}\|\Gamma^k u(t)\|_{L^2(\R_x)} +\sum_{|k|\leq 1} \|\Gamma^k \Omega u(t)\|_{L^2(\R_x)}\right). 
\end{equation}
\end{lem}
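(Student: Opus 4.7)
The plan is a Klainerman-Sobolev-type argument, in the spirit of Proposition~3.1 of \cite{Wu2009}. I start from the one-dimensional Sobolev embedding
\[
|u(t,x)|^{2}\;\leq\;C|I|^{-1}\|u\|_{L^{2}(I)}^{2}\;+\;C|I|\,\|\partial_{x}u\|_{L^{2}(I)}^{2}
\]
on an interval $I\ni x$ of length $|I|$ to be chosen. The key algebraic step is to invert the definitions of $L=(t/2)\partial_{t}+x\partial_{x}$ and $\Omega=x\partial_{t}+(t/2)\partial_{x}|D|^{-1}$: away from $x=0$,
\[
\partial_{x}u \;=\; \tfrac{1}{x}\Bigl(Lu-\tfrac{t}{2}\partial_{t}u\Bigr),\qquad \partial_{t}u \;=\; \tfrac{1}{x}\Bigl(\Omega u-\tfrac{t}{2}\partial_{x}|D|^{-1}u\Bigr),
\]
and combining these gives
\[
\partial_{x}u \;=\; \tfrac{1}{x}Lu\;-\;\tfrac{t}{2x^{2}}\Omega u\;+\;\tfrac{t^{2}}{4x^{2}}\,\partial_{x}|D|^{-1}u .
\]
Since $\partial_{x}|D|^{-1}$ is the Hilbert transform, hence bounded on $L^{2}$, the expression above reduces $\|\partial_{x}u\|_{L^{2}}$ to the norms appearing on the right-hand side of the lemma, up to factors of $|x|^{-1}$ and $t$. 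A parallel manipulation on $u$ itself, obtained by integrating $\partial_{x}u$ from $x$ to $\infty$ and using the assumed spatial decay, trades the $\|u\|_{L^{2}(I)}$ term in the Sobolev bound for the same combination of $\Omega u$ and $\Gamma^{k}u$ norms.

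Next, I partition $\R_{x}$ dyadically $|x|\sim 2^{j}$ and split according to $2^{j}\gtrsim t^{1/2}$ versus $2^{j}\ll t^{1/2}$. On each annulus I take $|I|\sim\min\{2^{j},t^{1/2}\}$; substituting the identities into the Sobolev bound produces a sum of terms each carrying a product of negative powers of $|x|$ and positive powers of $t$ that balance precisely to $t^{-1}$ (i.e.\ give the desired $t^{-1/2}$ after square-rooting). For the smaller annuli I localize $u$ via a smooth cutoff $\chi_{j}$ to the annulus and run the same argument on $\chi_{j}u$; the logarithmic summation over the $O(\log t)$ dyadic scales is absorbed by the second $\Gamma$-derivative allowed in the $|k|\leq 2$ sum, by the standard trick of pairing the logarithm with an $H^{s+\varepsilon}\hookrightarrow L^{\infty}$ gain.

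The principal obstacle is the nonlocality of $|D|^{-1}$ in $\Omega$. Under the dyadic cutoff $\chi_{j}$, the commutator $[\Omega,\chi_{j}]=(t/2)[\partial_{x}|D|^{-1},\chi_{j}]$ is a Calder\'on-type commutator whose kernel decays only polynomially off the diagonal, so localizing $\|\Omega u\|_{L^{2}}$ to a single annulus requires a $T(1)$-style estimate showing $L^{2}$-boundedness with an operator norm at most $C/2^{j}$; the resulting off-diagonal error must be reabsorbed into $\|\Omega u\|_{L^{2}}$ and the neighboring $\|\Gamma^{k}u\|_{L^{2}}$. A secondary technical point is the singularity of the algebraic identities at $x=0$, where they are replaced on the innermost annulus by the singularity-free relation $t\partial_{t}u=2Lu-2x\partial_{x}u$, reducing the estimate there to a pure $L$-based Sobolev bound on a window of length $t^{1/2}$. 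Once these two technicalities are handled, summing the dyadic contributions yields the claimed pointwise bound.
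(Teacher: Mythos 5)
The paper does not actually supply a proof of Lemma~\ref{Omega}: the accompanying remark attributes the argument to unpublished work of Sijue Wu and simply indicates that it parallels Proposition~3.1 of \cite{Wu2009}. So there is no written proof to compare against. Evaluating your proposal on its own terms, however, there is a concrete gap that the argument as written cannot close.

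Your algebraic inversion gives
\[
\partial_{x}u \;=\; \tfrac{1}{x}Lu\;-\;\tfrac{t}{2x^{2}}\Omega u\;+\;\tfrac{t^{2}}{4x^{2}}\,\partial_{x}|D|^{-1}u,
\qquad
\partial_{t}u \;=\; \tfrac{1}{x}\Omega u\;-\;\tfrac{t}{2x}\,\partial_{x}|D|^{-1}u,
\]
and you then assert that because $\partial_{x}|D|^{-1}$ is the Hilbert transform and hence $L^{2}$-bounded, these identities ``reduce $\|\partial_{x}u\|_{L^{2}}$ to the norms appearing on the right-hand side of the lemma.'' That step is false: $L^{2}$-boundedness of the Hilbert transform gives $\|\partial_{x}|D|^{-1}u\|_{L^{2}}\simeq\|u\|_{L^{2}}$, and $\|u\|_{L^{2}}$ is precisely \emph{not} on the right-hand side of the estimate — the first sum in the lemma runs over $1\le|k|\le 2$, and the second sum produces only terms of the form $\|\Gamma^{k}\Omega u\|_{L^{2}}$, none of which is $\|u\|_{L^{2}}$. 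The ``parallel manipulation on $u$ itself'' (integrating $\partial_{x}u$ from $x$ to $\infty$) does not escape this either: after Cauchy–Schwarz the $\tfrac{t^{2}}{4z^{2}}\partial_{z}|D|^{-1}u$ piece still produces $\|u\|_{L^{2}}$ (or, if you integrate by parts, a pointwise $|D|^{-1}u$, which is worse). So both the $\|u\|_{L^{2}(I)}$ term and the $\|\partial_{x}u\|_{L^{2}(I)}$ term in your Sobolev starting point end up requiring $\|u\|_{L^{2}}$, and the argument never eliminates it. Localizing by dyadic cutoffs cannot repair this, since $\partial_{x}|D|^{-1}$ is nonlocal and the full-line $L^{2}$ norm of $u$ reappears on every annulus.

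Your discussion of the secondary issues — the commutator $[\Omega,\chi_{j}]$ and the degenerate region $|x|\sim 0$ — is sensible and shows you understand where the technical obstacles of a localization argument live. But those refinements are moot until the $\|u\|_{L^{2}}$ dependence is removed. Any correct proof must exploit cancellation between the $\Omega u$ term and the remaining vector fields (for instance, observing that $\partial_{t}\Omega u = x\partial_{t}^{2}u + \tfrac12\partial_{x}|D|^{-1}u + \tfrac{t}{2}\partial_{x}|D|^{-1}\partial_{t}u$ makes $\partial_{x}|D|^{-1}u$ appear with a constant coefficient, at the price of $x\partial_{t}^{2}u$), rather than bounding the Hilbert-transform term crudely by $\|u\|_{L^{2}}$. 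As it stands, the proposal proves at best a weaker inequality with $\|u\|_{L^{2}}$ added to the right-hand side.
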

\begin{rem}
 The details of this proof are due to unpublished work of Sijue Wu. The argument is similar to that of Proposition 3.1 of \cite{Wu2009}.
\end{rem}

\subsubsection{Energy bounds}\label{antider}
In order to turn the Klainerman-type estimates into $L^\infty$ bounds on the solution in terms of the initial data, we use the energy estimates for \eqref{linWW}. 
\begin{lem}\label{energy}
Let $u(t,x)$ be a solution of \eqref{linWW} with $(u(0,x), u_t(0,x)) = (u_0(x), u_1(x))$ and $u_i\in \mathscr{S}(\R)$ for $i=0,1$. In addition, let $\Gamma = \{\partial_t, \partial_x, L = \frac{t}{2}\partial_t+x\partial_x\}$ and $\Gamma_1= \Gamma \cup\{\Omega = x\partial_t +\frac{t}{2}\partial_x|D|^{-1}\}$ and let $k$ be a multiindex. Define the energy functional as \[E[v](t) = \int |\partial_tv(t,x)|^2+|\half{|D|}v(t,x)|^2dx.\] Then, $E[u](t) = E[u](0)$ and 
\[\|\Gamma_1^k u(t)\|_{L^2}\leq \|\partial_t\mhalf{|D|}\Gamma_1^k u(0)\|_{L^2}+\|\Gamma_1^k u(0)\|_{L^2}.\]
\end{lem}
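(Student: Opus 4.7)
The plan has three ingredients: (i) verify that each vector field in $\Gamma_1$ carries solutions of \eqref{linWW} to solutions, (ii) establish energy conservation by a standard multiplier identity, and (iii) convert the conservation law into the asserted $L^2$ estimate via the spectral representation of the propagator.

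For (i), I will compute the commutators with $P := \partial_t^2 + |D|$ directly. The fields $\partial_t,\partial_x$ obviously commute with $P$. For $L=\tfrac{t}{2}\partial_t+x\partial_x$ one finds $[\partial_t^2,L]=\partial_t^2$ and, working on the Fourier side and using that $|D|$ is homogeneous of degree one, $[|D|,x\partial_x]=|D|$; together these give
$$PL = (L+1)P,$$
so $L$ preserves the null space of $P$. For $\Omega = x\partial_t+\tfrac{t}{2}\partial_x|D|^{-1}$ a similar symbol computation yields $[|D|,x]=-\partial_x|D|^{-1}$, which combines with $[\partial_t^2,t]=2\partial_t$ to produce the exact cancellation $[P,\Omega]=0$. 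Iterating, $v:=\Gamma_1^k u$ satisfies $Pv=0$ with data $v(0,\cdot),v_t(0,\cdot)$ computable from $u_0,u_1$ by replacing any factor $u_{tt}(0)$ arising from $L$ or $\Omega$ via the equation $u_{tt}(0)=-|D|u_0$.

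For (ii), differentiating under the integral and using self-adjointness of $|D|^{1/2}$ together with the fact that $|D|^{1/2}$ commutes with $\partial_t$ gives
$$\frac{d}{dt} E[v](t) = 2\,\mathrm{Re}\!\int \bigl(v_{tt}+|D|v\bigr)\,\overline{v_t}\,dx = 0,$$
so $E[v](t)=E[v](0)$. Specializing to $v=u$ yields the first assertion of the lemma.

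For (iii), the spectral calculus for $|D|$ on Schwartz data gives the closed form
$$v(t) = \cos\!\bigl(t|D|^{1/2}\bigr)\,v(0) + \sin\!\bigl(t|D|^{1/2}\bigr)\,|D|^{-1/2}\,v_t(0).$$
Since $\cos(t|D|^{1/2})$ and $\sin(t|D|^{1/2})$ are Fourier multipliers of operator norm at most $1$ on $L^2$, the triangle inequality yields
$$\|v(t)\|_{L^2} \leq \|v(0)\|_{L^2} + \bigl\||D|^{-1/2}v_t(0)\bigr\|_{L^2}.$$
Applying this with $v=\Gamma_1^k u$, and noting that $\partial_t$ commutes with $|D|^{-1/2}$, gives the stated bound. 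The one place to be careful is verifying $|D|^{-1/2}(\Gamma_1^k u)_t(0)\in L^2$, which I expect to be the main technical obstacle because of the low-frequency weight: the Schwartz hypothesis on $u_0,u_1$ handles decay at infinity, while at the origin one uses that $\Omega$ produces a factor of $x$ on $u_t$ (giving integrability of $\widehat{\Omega u}(0,\cdot)$ near $\xi=0$) and that $L$ contributes only $\partial_t$ and $x\partial_x$, both of which introduce positive powers of $\xi$ after Fourier transform.
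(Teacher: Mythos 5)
Your proof is correct, and it reaches the conclusion by a genuinely different final step than the paper's argument. You and the paper share ingredients (i) and (ii): both establish that each field in $\Gamma_1$ maps solutions to solutions (you via the explicit commutators $[P,L]=P$, $[P,\Omega]=0$; the paper simply asserts the invariance), and both derive $E[v](t)=E[v](0)$ for a solution $v$ by the standard multiplier identity. Where you diverge is in extracting the $L^2$ bound. The paper applies the already-proved energy conservation not to $v=\Gamma_1^k u$ but to the auxiliary solution $|D|^{-1/2}v$, then reads off
\[
\|v(t)\|_{L^2}=\bigl\||D|^{1/2}|D|^{-1/2}v(t)\bigr\|_{L^2}\le E\bigl[|D|^{-1/2}v\bigr](t)^{1/2}=E\bigl[|D|^{-1/2}v\bigr](0)^{1/2},
\]
and then bounds the square root by the sum of the two $L^2$ norms. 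You instead invoke the explicit propagator $v(t)=\cos(t|D|^{1/2})v(0)+\sin(t|D|^{1/2})|D|^{-1/2}v_t(0)$ and use that both multipliers have operator norm $\le 1$. Both routes are valid and essentially equivalent in difficulty; yours is perhaps a bit more concrete and avoids having to notice that the $|D|^{-1/2}$-smoothed solution is the right auxiliary object, while the paper's stays entirely within the energy framework. Your note of caution about low-frequency integrability of $|D|^{-1/2}(\Gamma_1^k u)_t(0)$ is well-placed: the same issue is implicit in the paper's use of $E[|D|^{-1/2}\Gamma^\alpha u](0)$, and it is precisely the observation that the author surfaces in Remark~\ref{remOm} when discussing why the $\Omega$ terms force control of an antiderivative of $u_0$; both proofs implicitly proceed under the convention that the inequality is vacuous whenever the right side is infinite.
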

\begin{rem}
This equality holds for a variety of classes of initial data. However, considering the data in Schwartz class allows us to use density arguments when the natural space for the data appears in our analysis.  
\end{rem}

\begin{proof}
Since $\partial_t, \partial_x$, and $\Omega$ are invariant under the operator $\partial_t^2+|D|$ and $[\partial_t^2+|D|, L]=\partial_t^2+|D|$, we can bound $\|\Gamma^\alpha u\|_{L^2}$ using the energy:
\begin{align*}
 \|\Gamma^\alpha u(t)\|_{L^2}&\leq \half{E[\mhalf{|D|}\Gamma^\alpha u](t)}\\
&=\half{E[\mhalf{|D|}\Gamma^ku](0)} \leq \half{\left(\int |\partial_t\mhalf{|D|}\Gamma^\alpha u(0,x)|^2+|\Gamma^\alpha u(0,x)|^2dx\right)}.
\end{align*}
A similar calculation holds for $\Omega\Gamma^\alpha$. 
\end{proof} 

\begin{rem}\label{remOm}
It is worth noting that the bound on $\Omega u(t,x)$ is not ideal:
\begin{align*}
\|\Omega u(t)\|_{L^2} &\leq \|\partial_t|D|^{-1/2}\Omega u (0)\|_{L^2}+\|\Omega u (0)\|_{L^2}\\
 &\leq \| \mhalf{|D|} x|D|u_0\|_{L^2}+\|\partial_x|D|^{-1}u_0\|_{L^2}+\|xu_1\|_{L^2}.
\end{align*}
We expect that $|D|^{-1/2}u_1$ has roughly the same regularity as $u_0$.  However, the term involving $\mhalf{|D|}u_0$ requires regularity on the antiderivative of $u_0$. This issue is precisely what caused the dependence of the data in \cite{Wu2009} on initial height and energy as well as initial slope. However, if $\widehat{u_0}$ was supported outside a ball centered at zero, we could control the bad term by the $L^2$ norm of the data. 
\end{rem}

\subsubsection{$L^\infty$ decay for the Linearized Water Wave problem}
The combination of Lemma \ref{energy} and Lemma \ref{Omega} yields the following $L^\infty$ bound. 
\begin{ppn}\label{KlainT}
 Let $u(t,x)$ be a solution of \eqref{linWW} with \[(u(0,x), u_t(0,x)) = (u_0(x), u_1(x))\in\mathscr{S}(\R)\times\mathscr{S}(\R). \]
Then, 
\begin{align}\label{KlainE}
 |u(t,x)|\leq& \,\, \frac{C}{\half{t}}\left(\sum_{1\leq|k|\leq 2}(\|\partial_t\mhalf{|D|}\Gamma^ku(0)\|_{L^2}+\|\Gamma^k u(0)\|_{L^2})\right)\\
&+\frac{1}{\half{t}}\left(\sum_{|k|\leq 1}(\|\partial_t\mhalf{|D|}\Gamma^k\Omega u(0)\|_{L^{2}}+ \|\Gamma^k\Omega u(0)\|_{L^{2}})\right).\nonumber\end{align}
\end{ppn}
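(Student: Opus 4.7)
The plan is to chain the two lemmas just established in a straightforward manner: apply the Klainerman-type pointwise estimate of Lemma \ref{Omega} to bound $|u(t,x)|$ by a sum of time-$t$ $L^2$ norms, then transport each of those norms back to time $t=0$ using the energy bound of Lemma \ref{energy}. The smallness assumption $u_i \in \mathscr{S}(\R)$ is exactly what guarantees that both lemmas apply, and will be invoked only to justify the manipulations.

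More concretely, I would proceed as follows. First, directly invoke Lemma \ref{Omega} with $v = u(t,\cdot)$ to get
\[
|u(t,x)| \leq \frac{C}{\sqrt{t}}\left(\sum_{1\leq |k|\leq 2}\|\Gamma^k u(t)\|_{L^2(\R_x)} + \sum_{|k|\leq 1}\|\Gamma^k \Omega u(t)\|_{L^2(\R_x)}\right).
\]
Next, for each term in the first sum, apply Lemma \ref{energy} with the multiindex $k$ restricted to $\Gamma$-directions, producing
\[
\|\Gamma^k u(t)\|_{L^2} \leq \|\partial_t |D|^{-1/2}\Gamma^k u(0)\|_{L^2} + \|\Gamma^k u(0)\|_{L^2}.
\]
For the second sum, observe that $\Gamma^k \Omega$ is an element of the enlarged family $\Gamma_1^{k'}$ (with $|k'|=|k|+1$), so Lemma \ref{energy} again gives
\[
\|\Gamma^k \Omega u(t)\|_{L^2} \leq \|\partial_t |D|^{-1/2}\Gamma^k \Omega u(0)\|_{L^2} + \|\Gamma^k \Omega u(0)\|_{L^2}.
\]
Summing these estimates and substituting into the pointwise bound yields exactly \eqref{KlainE}.

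The only point that requires care is the second step for $\Omega$: one must check that $\Omega$ indeed belongs to the family $\Gamma_1$ for which Lemma \ref{energy} applies, and that the commutator structure used in the proof of Lemma \ref{energy} (namely invariance under $\partial_t^2 + |D|$ for $\partial_t,\partial_x,\Omega$, together with $[\partial_t^2 + |D|, L] = \partial_t^2 + |D|$) is preserved when mixed operators $\Gamma^k \Omega$ are applied. Since commutation introduces only terms that are themselves absorbed back into $\Gamma_1^{k'}$ with $|k'|\leq |k|+1$, the energy estimate closes. No further work is needed; the main challenge, as noted in Remark \ref{remOm}, is really a \emph{conceptual} one rather than a technical one --- the $\Omega$-term on the right hand side carries an implicit $\mhalf{|D|}$ applied to $u_0$ (via $\partial_t \mhalf{|D|}\Omega u(0)$), which is the very source of the small-height requirement that motivates the rest of the paper.

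Since the proof is essentially a one-line composition of the two preceding lemmas, I do not anticipate any genuine obstacle; the substance of the argument lives in the proofs of Lemmas \ref{Omega} and \ref{energy}, and Proposition \ref{KlainT} is best read as a clean packaging of them.
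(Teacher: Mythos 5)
Your proposal is correct and is essentially identical to the paper's argument: the paper itself only says that Proposition \ref{KlainT} is ``the combination of Lemma \ref{energy} and Lemma \ref{Omega},'' and you carry out exactly that composition, applying Lemma \ref{Omega} at time $t$ and then Lemma \ref{energy} (with $\Gamma_1 = \Gamma \cup \{\Omega\}$) to transport each $L^2$ norm back to time $0$. The observation that $\Gamma^k\Omega$ is a $\Gamma_1$-word of length $|k|+1$, so the energy estimate applies, is the only point needing care, and you handle it correctly.
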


\begin{rem}
The inequality \eqref{KlainE} has concise notation but it obscures the precise bounds on the right hand side. Using commutators, we can write each of these sums explicitly. The first two terms contain $L^2$ bounds of derivatives up to first order and homogeneous operators (such as $x\partial_x$) of the initial data. More interesting are the bounds on the second two terms:
\begin{align}
&\label{Omd}\sum\|\Gamma^k\Omega u(0)\|_{L^{2}}\lesssim \|xu_1\|_{L^2}+\|(x\partial_x)(xu_1)\|_{L^2}+\|\partial_x|D|^{-1}u_0\|_{L^2}\\
&\hspace{1in}+\|x|D|u_0\|_{L^2}+\|x\partial_xu_1\|_{L^2}+\|u_1\|_{L^2}\nonumber\\
&\label{Omtd}\sum\|\partial_t\mhalf{|D|}\Gamma^k\Omega u(0)\|_{L^{2}}\lesssim \|x|D|\mhalf{|D|}u_0\|_{L^2}+\|\partial_x|D|^{-1}\mhalf{|D|}u_0\|_{L^2}\\\vspace{-5pc}
&\hspace{1.35in}+\|(x\partial_x)(\partial_x|D|^{-1})\mhalf{|D|}u_0\|_{L^2}+\|(x\partial_x)(x|D| )\mhalf{|D|}u_0\|_{L^2}
\nonumber\\
&\hspace{1.35in}+\|\partial_x|D|^{-1}u_1\|_{L^2}+\|\half{|D|}u_0\|_{L^2}+\||D|u_0\|_{L^2}\nonumber\\
&\hspace{1.35in}+\|\partial_x|D|^{-1}u_0\|_{L^2}+\|x\partial_x\partial_x|D|^{-1} u_0\|_{L^2}\nonumber\\
&\hspace{1.35in}+\|x|D|\mhalf{|D|}u_1\|_{L^2}+\|\partial_x|D|^{-1}\mhalf{|D|}u_1\|_{L^2}\nonumber.
\end{align}
The first four terms contain the troublesome $\mhalf{|D|}u_0$, as mentioned in Remark \ref{remOm}. 
\end{rem}

These results give a decay of $\mhalf{t}$ for certain classes of data. The inequality in Proposition \ref{KlainT} along with the observation about $\Omega u(t)$ in Remark \ref{remOm} suggests that for data bounded away from the origin in frequency, Proposition \ref{KlainT} gives the desired $\mhalf{t}$ decay. On the other hand, Theorem \ref{growth} for $a = \frac{1}{2}$ gives the desired decay in the low frequency regime. Theorem \ref{growth} implies $\mhalf{t}$ decay whenever $|y|\geq1$ or other constant, and thus only when $|y|<1$ do we have an undesirable decay rate. Previous sharpness results for that theorem also suggest that singularity comes from a singularity in norm around $1/y$ in frequency.  Combining these two observations suggests that Theorem \ref{growth} is the right choice for initial data concentrated in low frequency. 

\subsection{Analysis of Initial Data}

The argument above suggests that we should examine data supported away from the origin in frequency and data supported near the origin in frequency independently. We begin with the first of these cases. 

\subsubsection{Large frequency data}

If instead we consider data supported in $|\xi|>R$, we can conclude the following corollary to Proposition \ref{KlainT} :
\begin{corollary}
 For $w(t,x)$ a solution of \eqref{linWW} with $(\widehat{w_0}(\xi), \widehat{w_1}(\xi))$ each supported in $|\xi|\geq R$, we have
\begin{equation}\label{wbd}
\sup_{y} |w(t,yt^2)|\leq \frac{1+R^{-1/2}}{\half{t}} \left(\sum_{|k|\leq 2}(\|\mhalf{|D|}\Gamma^kw_1\|_{L^2}+\|\Gamma^k w_0\|_{L^2})\right).
\end{equation}
\end{corollary}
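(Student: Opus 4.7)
The plan is to invoke Proposition \ref{KlainT} applied to $w$, then to absorb every term in the expansions \eqref{Omd} and \eqref{Omtd} into the cleaner right-hand side asserted in the corollary by exploiting the Fourier-support hypothesis on $(w_0,w_1)$. The prefactor $t^{-1/2}$ is already present in \eqref{KlainE}, so the entire task is to dominate each of the terms in the right-hand sides of \eqref{Omd}, \eqref{Omtd}, and the $\Gamma^k u(0)$, $\partial_t\mhalf{|D|}\Gamma^k u(0)$ sums of Proposition \ref{KlainT}, by a constant multiple, carrying at most one factor of $R^{-1/2}$, of some $\|\Gamma^j w_0\|_{L^2}$ or $\|\mhalf{|D|}\Gamma^j w_1\|_{L^2}$ with $|j|\leq 2$.

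The key analytic input is that on functions whose Fourier transform vanishes on $|\xi|<R$, the fractional antiderivative $\mhalf{|D|}$ is a bounded operator of norm at most $R^{-1/2}$ by Plancherel. Moreover this spectral-support condition is preserved by every operator in sight: $\partial_x$, $|D|^{\pm 1}$ and $\half{|D|}$ are Fourier multipliers that preserve vanishing on $|\xi|<R$ trivially, and multiplication by $x$ acts in Fourier as $i\partial_\xi$, which preserves vanishing on the open set $\{|\xi|<R\}$. Consequently, the troublesome $\mhalf{|D|}w_0$-type entries of \eqref{Omtd}, such as $\|x|D|\mhalf{|D|}w_0\|_{L^2}$ or $\|(x\partial_x)(\partial_x|D|^{-1})\mhalf{|D|}w_0\|_{L^2}$, can be rearranged (commuting $\mhalf{|D|}$ through the $x$-factors, which generates only lower-order terms of the same structure) into expressions of the form $R^{-1/2}\|\Gamma^j w_0\|_{L^2}$. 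The $w_1$-entries of \eqref{Omd} that carry no $\mhalf{|D|}$ factor, such as $\|xw_1\|_{L^2}$ or $\|(x\partial_x)(xw_1)\|_{L^2}$, are handled symmetrically: since $x^\ell w_1$ still has Fourier support in $|\xi|\geq R$, Plancherel yields $\|x^\ell w_1\|_{L^2}\leq R^{-1/2}\|\half{|D|}x^\ell w_1\|_{L^2}$, and commuting $\half{|D|}$ past $x$ matches the right-hand side to a $\|\mhalf{|D|}\Gamma^j w_1\|_{L^2}$ term. Operators of the form $\partial_x|D|^{-1}$ are $L^2$-multipliers of unit norm and need no frequency hypothesis at all.

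Summing the resulting bounds over the finite collection $|k|\leq 2$ and collecting constants produces the claimed prefactor $(1+R^{-1/2})/\half{t}$. The main obstacle is book-keeping rather than analysis: one has to verify term by term that every entry of the expansion of \eqref{KlainE} falls into one of the two allowed templates, and that no commutator $[\mhalf{|D|},x]$ or $[\half{|D|},x]$ produces a factor of $R$ worse than $R^{-1/2}$. A Littlewood--Paley projection onto $|\xi|\geq R$ combined with Plancherel on each dyadic shell would streamline the bookkeeping, but is not strictly necessary for the final estimate.
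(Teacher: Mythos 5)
Your proposal takes essentially the same route as the paper: invoke Proposition~\ref{KlainT} for $w$, then use the Fourier-support hypothesis together with Plancherel to trade each factor of $\mhalf{|D|}$ acting on $w_0$-type terms for a factor $R^{-1/2}$, since $|\xi|^{-1/2}\leq R^{-1/2}$ on $\supp\widehat{w_0}$. The paper's own proof is even briefer than yours (it carries out only the sample bound $\|\Omega w_0\|_{L^2}\leq\sum_{|k|=1}\|\Gamma^k w_0\|_{L^2}+R^{-1/2}\|w_0\|_{L^2}$ and asserts the remaining terms are handled similarly), so your additional bookkeeping about preservation of spectral support under $x$-multiplication and the commutators $[\mhalf{|D|},x]$, $[\half{|D|},x]$ is a welcome elaboration of the same argument rather than a different one.
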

\begin{proof}
 The inequality above follows directly from the pointwise Klainerman bound, Proposition \ref{KlainT}:
\begin{align*}
 \sup_{}|w(t,yt^2)| \leq& \frac{1}{\half{t}}\left(\sum_{1\leq|k|\leq 2}(\|\mhalf{|D|}\Gamma^kw_1\|_{L^2}+\|\Gamma^k w_0\|_{L^2})\right.\\
&\left.+\sum_{|k|\leq 1}(\|\mhalf{|D|}\Gamma^k\Omega w_1\|_{L^{2}}+ \|\Gamma^k\Omega w_0\|_{L^{2}})\right).
\end{align*} Since $\widehat{w_0}$ is supported in $|\xi|>R$, we can bound $\|\Omega w_0\|$ by
\[\|\Omega w_0\|_{L^2} \leq \sum_{|k|= 1}\|\Gamma^k w_0\|_{L^2}+R^{-1/2}\|w_0\|_{L^2}. \] Then, the full bound on the first term is the equation in the statement of the proposition. 
\end{proof}

\subsubsection{Low frequency data}
\begin{ppn}
 Let $u(t,x)$ be a solution to the differential equation (\ref{linWW}) with initial data $u_i(x)\in\mathscr{S}(\R)$ such that $\supp \widehat{u_i}(\xi)\subset B_R(0)$. Then,when $|y|\leq (8R^{1/2}T)^{-1} $  \[\sup_{|y|\leq (8R^{1/2}T)^{-1}} |u(t,yt^2)|\leq \frac{1}{T^{\frac{1}{2}}}\sum_{|k|\leq 1}\left(\|L^k u_0\|_{\dot H^{1/4}}+\|L^k u_1\|_{\dot H^{-1/4}}\right).\]
\end{ppn}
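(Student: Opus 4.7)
The plan is to decompose $u$ into two half-wave solutions and apply the framework developed for Theorem \ref{growth} and Proposition \ref{KEY}, with a crucial refinement: in the low-frequency, small-$y$ regime considered here, the stationary-phase contribution that produces the growth factor $|y|^{-1/4}$ in Theorem \ref{growth} is absent, and the pointwise $T^{-1/2}$ decay is recovered.

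Write $u = u_+ + u_-$, where $u_\pm$ solves $\partial_t u_\pm \mp i|D|^{1/2} u_\pm = 0$ with $u_\pm(0) = \tfrac{1}{2}(u_0 \mp i|D|^{-1/2} u_1)$. Then $\supp\widehat{u_\pm(0)} \subset B_R(0)$, and standard commutator bounds (using that $[L,|D|^{-1/2}]$ is a constant multiple of $|D|^{-1/2}$) give $\|L^k u_\pm(0)\|_{\dot H^{1/4}} \lesssim \sum_{j\leq k}(\|L^j u_0\|_{\dot H^{1/4}} + \|L^j u_1\|_{\dot H^{-1/4}})$. Apply Lemma \ref{Sob} with $a = 1/2$; since $L$ commutes with each half-wave propagator, each $L^k u_\pm$ is itself a half-wave solution whose initial data has Fourier support in $B_R(0)$. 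It therefore suffices to show
\[\left(\int_T^{2T}|v(t,yt^2)|^2\,dt\right)^{1/2} \leq C\|v_0\|_{\dot H^{1/4}}\]
for any $v$ solving $\partial_t v - i|D|^{1/2} v = 0$ with $\supp\widehat{v_0}\subset B_R(0)$, whenever $|y|\leq (8R^{1/2}T)^{-1}$.

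By the duality argument from the proof of Theorem \ref{growth} and the frequency support of $\widehat{v_0}$, this reduces to the weighted operator bound
\[\int_{|\xi|\leq R} |\xi|^{-1/2}|\T(g\varphi_T)(\xi)|^2\,d\xi \leq C\|g\|_{L^2}^2.\]
I would prove this by adapting the proof of Lemma \ref{Tthm} at $a = 1/2$, $\sigma = 1/4$. The phase $\Psi(\xi) = y(t^2-s^2)\xi + (t-s)|\xi|^{1/2}$ has critical point $|\xi_0| = (4y^2(t+s)^2)^{-1}$; the hypothesis $|y|\leq (8R^{1/2}T)^{-1}$ is chosen precisely so that $|\xi_0|\geq R$ for all $t,s\in[T,2T]$. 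Consequently the critical point lies outside the integration range, the near-critical-point contribution (the $K_1$ piece in the proof of Lemma \ref{Tthm}) vanishes identically, and only the far-from-critical $K_2$ pieces remain. These are handled by integration by parts combined with the standard-kernel bounds of Propositions \ref{noxik} and \ref{parxik}; the resulting estimate is proportional to $|y|^{(1-a-2\sigma)/(a-1)}T^{(a-1+2\sigma)/a}$, which equals $1$ for $a = 1/2$, $\sigma = 1/4$, eliminating the $y$-growth factor altogether.

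The principal obstacle is the boundary edge case $(t,s)\to(2T,2T)$, where $|\xi_0|\to R$ and the critical point approaches the endpoint of integration. The boundary terms from integration by parts at $\xi=\pm R$ could in principle be large because $|\Psi'(\pm R)|$ degenerates in this limit. A careful analysis should show that this degeneracy occurs only as $|t-s|\to 0$ simultaneously, so that the resulting singularity is absorbed into the $(t-s)^{-1}$ structure expected of a standard kernel; combining the resulting $L^2$ bound with Lemma \ref{Sob} then yields the proposition.
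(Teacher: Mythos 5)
Your approach is essentially the paper's. The paper also reduces \eqref{linWW} to the half-wave propagator $\mathcal{S}$, applies Lemma \ref{Sob} with $a=1/2$, and proves a supporting $L^2$ estimate (Proposition \ref{lowfreqG}): by duality, the problem becomes a weighted bound on $\T(g\varphi_T)$ over the support $B_R(0)$, and the hypothesis $|y|\leq(8R^{1/2}T)^{-1}$ is exactly what pushes the stationary point $|\xi_0|=(4y^2(t+s)^2)^{-1}$ outside the Fourier support, so that the fractional-integral/HLS piece of the argument in Lemma \ref{Tthm} never enters and pure integration by parts plus the standard-kernel lemmas (Propositions \ref{noxik} and \ref{parxik}) give a $y$- and $T$-independent $L^2$ bound. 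Your use of the homogeneous weight $|\xi|^{-1/2}$ is in fact the one the paper actually computes with, despite a typo in the duality line of its proof.

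Two small misdiagnoses in your edge-case discussion. First, the boundary terms from integration by parts at the edges of the Fourier support vanish identically because the smooth cutoff $\chi$ (equal to $1$ on $B_R(0)$, supported in $B_{2R}(0)$) vanishes there; the paper's only surviving boundary contribution is the jump at $\xi=0$ produced by $\sgn\xi$, which is exactly $4i/(t-s)$ and already has the standard-kernel form. The genuine concern is instead the \emph{size of the integrand} $\partial_\xi\bigl(\chi^2(\xi)/\Psi'(\xi)\bigr)$: if $|\xi_0|$ is merely greater than $R$ but comes within a constant of the support of $\chi$, the factor $1/\Psi'(\xi)$ loses its uniform lower bound. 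Second, this near-degeneracy is \emph{not} tied to $|t-s|\to0$; it occurs when $t+s$ is largest (so that $|\xi_0|$ is smallest), with $t-s$ arbitrary. Absorbing it into the $(t-s)^{-1}$ structure as you suggest would not work. What is actually needed is that the constant in $Y=(8R^{1/2}T)^{-1}$, together with the support radius of $\chi$, keep $|\xi_0|$ separated from $\supp\chi$ by a fixed multiplicative factor uniformly over $t,s$ in the support of $\varphi_T$; once that separation holds, $|\Psi'(\xi)|\gtrsim |t-s|\,R^{-1/2}$ on $\supp\chi$ and the remaining estimate is $R$- and $y$-independent, as in the paper. So the obstacle you flag is real as a constant-bookkeeping matter, but the resolution you propose is not the right one; the paper's (implicit) resolution is simply the choice of the numerical constant in $Y$.
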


This proposition follows from Lemma \ref{Sob} and this proposition on the $L^2$ norm:

\begin{ppn}\label{lowfreqG}
 Let $\widehat{v}\in C_0^\infty(\R)$ with $\supp \widehat{v} \subseteq B_R(0)$. Let $1/4 \leq \sigma\leq 1/2$. Then, for $y<Y = (8R^{1/2}T)^{-1}$, 
\[\|\mathcal{S}v\|_{L^2(T,2T)}\leq C \|v\|_{\dot{H}^{\sigma}}.\]
\end{ppn}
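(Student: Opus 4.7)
The plan is to adapt the proof of Lemma \ref{Tthm} (the case $0<a<1$, specialized to $a=1/2$) with the crucial modification that the $\xi$-integration is restricted to $|\xi|<R$ via the Fourier support hypothesis on $v$. The threshold $y<Y=(8R^{1/2}T)^{-1}$ is calibrated so that the critical point of the phase---the point responsible for the growth factor $y^{a/2(a-1)}=y^{-1/2}$ in Lemma \ref{Tthm}---falls outside this integration window, thereby eliminating the near-critical-point contribution that produced the $y^{-1/2}$.

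First, by the same duality computation as \eqref{Tcalc} and the support condition $\supp\widehat v\subseteq B_R(0)$,
\begin{align*}
\|\mathcal{S}v\|_{L^2(T,2T)} &\leq \sup_{\|g\|_{L^2}=1}|\langle\widehat v,\mathcal{T}(g\varphi_T)\rangle|\\
&\leq \|v\|_{\dot H^\sigma}\,\sup_{\|g\|_{L^2}=1}\left(\int_{|\xi|<R}|\xi|^{-2\sigma}|\mathcal{T}(g\varphi_T)(\xi)|^2\,d\xi\right)^{1/2},
\end{align*}
so it suffices to show the weighted integral on the right is $\leq C\|g\|_{L^2}^2$. Expanding the square and swapping orders of integration recasts this as bounding on $L^2$ the operator with kernel
\[K(t,s)=\int_{|\xi|<R}|\xi|^{-2\sigma}e^{-i\Psi(\xi)}\,d\xi,\qquad \Psi(\xi)=y(t^2-s^2)\xi+(t-s)|\xi|^{1/2}.\]

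Next I would verify non-stationarity of $\Psi$ on the truncated domain: the critical point (when it exists) lies at $|\xi_0|=1/(4y^2(t+s)^2)$, and the bound $y<Y$ together with $t+s\lesssim T$ on the support of $\varphi_T\otimes\varphi_T$ forces $|\xi_0|>R$. A direct calculation then yields the lower bound $|\partial_\xi\Psi(\xi)|\geq c|t-s|R^{-1/2}$ uniformly on $|\xi|<R$. To estimate $K(t,s)$ I split at an intermediate scale $\delta$: the small piece is bounded trivially by $C\delta^{1-2\sigma}$, while on $\delta\leq|\xi|\leq R$ integration by parts in $\xi$ produces boundary contributions of order $(\delta^{1/2-2\sigma}+R^{1/2-2\sigma})/|t-s|$ together with an interior integral of the same shape, analogous to the pieces $K_2'$ and $K_2''$ from the far-from-critical-point analysis in the proof of Lemma \ref{Tthm}. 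Choosing $\delta\sim\min(R,|t-s|^{-2})$ balances these contributions and yields a $y$-independent pointwise kernel estimate
\[|K(t,s)|\leq C\Bigl(\min(R,|t-s|^{-2})^{1-2\sigma}+R^{1/2-2\sigma}|t-s|^{-1}\Bigr).\]

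To convert this pointwise kernel bound into the desired $L^2$ operator bound, the $\min(R,|t-s|^{-2})^{1-2\sigma}$ piece is handled by Schur's test, while the $R^{1/2-2\sigma}|t-s|^{-1}$ piece is treated as a standard kernel in the sense of Proposition \ref{noxik} (the oscillatory factor coming from the IBP evaluation providing the required bounded, symmetric amplitude). The principal technical obstacle is keeping the IBP boundary contributions under control despite the singularity of $|\xi|^{-2\sigma}$ at the origin when $\sigma>1/4$---this is exactly why the splitting at scale $\delta$ is required, and why the endpoint $\sigma=1/2$ needs an additional logarithmic analysis at $\xi=0$ exploiting the oscillation from the $(t-s)|\xi|^{1/2}$ factor. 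Conceptually, bypassing the stationary-phase contribution at $\xi_0$ (which is possible precisely because $|\xi_0|>R$) replaces the growth factor $(1+y^{-1/2})$ from Proposition \ref{KEY} by a constant that does not depend on $y$, giving the stated improvement in the low-frequency regime.
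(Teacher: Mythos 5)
Your proposal follows essentially the same route as the paper: a duality reduction, exploitation of the compact Fourier support of $v$ to restrict the $\xi$-integral, the observation that $y<Y$ pushes the stationary point $|\xi_0|=1/(4y^2(t+s)^2)$ outside the integration window, a single integration by parts, and conversion of the resulting $|t-s|^{-1}$-type kernel to an $L^2$ bound via standard kernel/$T1$ machinery. The two noteworthy differences are (i) the paper inserts a smooth cutoff $\chi$ supported in $B_{2R}(0)$ so that the only boundary contribution in the IBP comes from the $\sgn\xi$ jump at $\xi=0$, whereas your sharp truncation at $|\xi|=R$ generates additional endpoint terms that you absorb into your kernel bound; and (ii) the paper writes out the computation only for the homogeneous weight $|\xi|^{-1/2}$ (the $\sigma=1/4$ case, where the weight exactly cancels the singularity of $1/\Psi'$ at $\xi=0$) and asserts the remaining $\sigma$ by analogy, while your $\delta$-splitting is the concrete device needed to treat $\sigma\in(1/4,1/2)$. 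One caution: your claim that $|\Psi'(\xi)|\gtrsim |t-s|R^{-1/2}$ uniformly on $|\xi|<R$ does not follow merely from $|\xi_0|>R$; on the half-line containing $\xi_0$ one has $|\Psi'(\xi)|=|t-s|\bigl((1/2)|\xi|^{-1/2}-|y|(t+s)\bigr)$, and with $t+s$ ranging over the support of $\varphi_T\otimes\varphi_T$ and $y$ only slightly below $Y=(8R^{1/2}T)^{-1}$ this difference can be arbitrarily small. A quantitative lower bound requires $|\xi_0|$ to exceed $R$ by a fixed factor, which amounts to tightening the numerical constant in $Y$ (the paper's argument has the same marginality, so this is an inherited constant issue rather than a flaw unique to your write-up, but it should be addressed rather than asserted).
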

\begin{proof}
Let $\varphi(t)\in C^\infty_0(\R)$ with $\varphi =1$ for $t\in(1,2)$ and $\varphi = 0$ for $t\in (1/2, 5/2)^C$ and $\varphi_T(t) = \varphi(t/T)$, and let $\chi(\xi)$ be identically $1$  on $B_R(0)$ and $0$ on $B_{2R}(0)^C$. Now, 
\begin{align*}
 \|\mathcal{S}v\|_{L^2(T,2T)} &= \sup_{g\in L^2, \|g\|=1} \left|\langle \mathcal{S}v(t)\varphi_T(t), g(t)\rangle\right|\\
&= \sup_{g\in L^2, \|g\|=1} \left|\langle \widehat{v}(\xi), \T(g\varphi_T)(\xi)\rangle\right|\\
&\leq \|v\|_{\dot{H}^{\sigma}}\left(\int (1+|\xi|^2)^{-\sigma} |\chi(\xi)\T(g\varphi_T)(\xi)|^2d\xi\right)^{\frac{1}{2}}.
\end{align*} As before, we rewrite the operator squared as a product and reorder the integral:
\[\int |\chi(\xi)\T(g\varphi_T)(\xi)|^2d\xi=\int g\varphi_T(t)\overline{g\varphi_T}(s) \int e^{-i(y(t^2-s^2)\xi+(t-s)|\xi|^{1/2})}\frac{|\chi(\xi)|^2}{(1+|\xi|^2)^{\sigma}}d\xi\]
The stationary point of the oscillatory integral is at $-1/(4y^2(t+s)^2)$, so if $2R <1/(4y^2(t+s)^2)$, we can simply integrate by parts. Then, $y<Y$ implies that $\xi_0\in B_{2R}(0)^C$, and we can control the integral in $\xi$ by integration by parts:
\begin{align*}
\int &e^{-i(y(t^2-s^2)\xi+(t-s)|\xi|^{1/2})}|\chi(\xi)|^2\mhalf{|\xi|}d\xi = \int_{-2R}^{2R} \frac{-\mhalf{|\xi|}\partial_\xi\left(e^{-i(y(t^2-s^2)\xi+(t-s)\half{|\xi|})}\right)d\xi}{i(y(t^2-s^2)+(1/2)(t-s)\mhalf{|\xi|}\sgn\xi)}\\
&=\left.\frac{e^{-i(y(t^2-s^2)\xi+(t-s)\half{|\xi|})}|\chi(\xi)|^2}{-i(y(t^2-s^2)|\xi|^{1/2}+(1/2)(t-s)\sgn\xi)}\right|_{-\infty}^{\infty}\\
&-\int e^{-i(y(t^2-s^2)\xi+(t-s)\half{|\xi|})} \partial_\xi\left(\frac{|\chi(\xi)|^2}{-i(y(t^2-s^2)\half{|\xi|}+(1/2)(t-s)\sgn\xi)}\right)d\xi\\
& = \frac{4i}{t-s} -\int e^{-i(y(t^2-s^2)\xi+(t-s)\half{|\xi|})} \partial_\xi\left(\frac{\chi(|\xi)|^2}{-i(y(t^2-s^2)\half{|\xi|}+(1/2)(t-s)\sgn\xi)}\right)d\xi
\end{align*}
The remaining term is also bounded by $|t-s|^{-1}$. In fact, following the same arguments as an earlier proof we can show the second term is a standard kernel with constant independent of $y$ and we can use the $T1$ theorem to show it is the kernel of a bounded operator. In fact, all the necessary bounds on the kernels are independent of the size of the support of $\widehat{v}$. Therefore, we have: 
\[\left(\int |\chi(\xi)\T(g\varphi_T)(\xi)|^2d\xi\right)^{1/2} \leq C\|g\varphi_T\|_{L^2} \] with $C$ independent of $y$, $T$, and $R$.
\end{proof}

\subsection{An improved $L^\infty$ decay result}

Since the results from the invariant vector field-type bounds and those from the Klainerman type bounds have difficulty controlling the solution in different areas, we combine the two results in order to improve the decay rate. 
We combine Theorem \ref{growth} in the case $a=1/2$ with Proposition \ref{KlainT} to get the following theorem:
\begin{theorem}\label{514}
 Let $u(t,x)$ be a solution of $\partial_t^2 u +|D|u =0$ with $(u(0,x),u_t(0,x))=(u_0,u_1)\in \mathscr{S}(\R)\times\mathscr{S}(\R)$. Then, 
\begin{equation}
\sup_{y\in\R}|u(t,yt^2)|\leq \frac{1}{t^{5/14}}\sum_{|k|\leq 2}\left(\|\Gamma^k u_0\|_{L^2}+\|\Gamma^k\mhalf{|D|} u_1\|_{L^2}\right)
\end{equation}
\end{theorem}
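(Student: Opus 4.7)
The plan is to combine the Klainerman-type bound from the Corollary of \S\ref{sec:lin}, applied to a high-frequency piece of the data, with Theorem \ref{growth} and Proposition \ref{lowfreqG} applied to a low-frequency piece, then to optimize the frequency cutoff. Let $\chi\in C_0^\infty(\R)$ with $\chi\equiv 1$ on $[-1,1]$ and $\supp\chi\subset [-2,2]$; for a parameter $R=R(t)>0$ to be chosen, set $\chi_R(\xi)=\chi(\xi/R)$ and split $u=u^L+u^H$ with $u^L=\chi_R(D)u$, $u^H=(1-\chi_R(D))u$. Each piece solves \eqref{linWW} with appropriately cut-off Cauchy data; since $\chi_R(D)$ is $L^2$-bounded, all norms of the cut-off data are controlled by the corresponding norms of $(u_0,u_1)$.

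For $u^H$, whose data is Fourier-supported in $|\xi|\geq R$, the Corollary yields
\[
\sup_y|u^H(t,yt^2)|\leq \frac{C(1+R^{-1/2})}{t^{1/2}}\sum_{|k|\leq 2}\bigl(\|\Gamma^k u_0\|_{L^2}+\|\mhalf{|D|}\Gamma^k u_1\|_{L^2}\bigr),
\]
the dominant contribution (for the balance below) being $R^{-1/2}t^{-1/2}$. For $u^L$, decompose into one-way solutions $u^L_\pm = u^L\mp i|D|^{-1/2}\partial_t u^L$, each satisfying $\partial_t u^L_\pm=\pm i|D|^{1/2}u^L_\pm$ with initial data $\chi_R(D)(u_0\mp i|D|^{-1/2}u_1)$ Fourier-supported in $|\xi|\leq 2R$, so that Theorem \ref{growth} with $a=1/2$ (or its complex conjugate for the minus sign) applies.

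In the regime $|y|\geq (8R^{1/2}t)^{-1}$, Theorem \ref{growth} gives
\[
|u^L_\pm(t,yt^2)|\leq C(1+|y|^{-1/4})t^{-1/2}\bigl(\|u^L_\pm(0)\|_{\dot H^{1/4}}+\|Lu^L_\pm(0)\|_{\dot H^{1/4}}\bigr),
\]
where $L=t\partial_t+2x\partial_x$ is the scaling field appropriate to $a=1/2$. A Bernstein estimate on the Fourier support yields $\|v\|_{\dot H^{1/4}}\leq (2R)^{1/4}\|v\|_{L^2}$, while in this regime $|y|^{-1/4}\leq CR^{1/8}t^{1/4}$, for a total bound of $\lesssim R^{3/8}t^{-1/4}$ times $L^2$ norms. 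In the complementary regime $|y|\leq (8R^{1/2}t)^{-1}$, Proposition \ref{lowfreqG} with $\sigma=1/4$ combined with Lemma \ref{Sob} gives the strictly better bound $R^{1/4}t^{-1/2}$ times $L^2$ norms, which is absorbed.

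Summing, one obtains
\[
|u(t,yt^2)|\leq C\bigl(R^{-1/2}t^{-1/2}+R^{3/8}t^{-1/4}\bigr)\sum_{|k|\leq 2}\bigl(\|\Gamma^k u_0\|_{L^2}+\|\Gamma^k\mhalf{|D|}u_1\|_{L^2}\bigr),
\]
and setting $R^{-1/2}t^{-1/2}=R^{3/8}t^{-1/4}$ yields $R=t^{-2/7}$, at which both terms equal $t^{-5/14}$. The main technical obstacle is the bookkeeping needed to verify that the $\dot H^{1/4}$ and $L^2$ norms of $u^L_\pm(0)$ and $L u^L_\pm(0)$ are genuinely controlled by the $\Gamma$-norms of $u_0$ and $\mhalf{|D|}u_1$ on the right-hand side: this requires commuting $\chi_R(D)$ past $x\partial_x$ and $|D|^{-1/2}$ (the commutator $[x\partial_x,\chi_R(D)]$ has bounded symbol $-(\xi/R)\chi'(\xi/R)$, giving an $R$-uniform $L^2$ bound), handling the commutator $[x\partial_x,\mhalf{|D|}]=\tfrac{1}{2}\mhalf{|D|}$, and relating $L=t\partial_t+2x\partial_x$ to $L_{WW}=(t/2)\partial_t+x\partial_x$ (appearing in $\Gamma$) by a constant factor at $t=0$.
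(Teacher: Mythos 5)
Your proposal follows essentially the same route as the paper: both split the data at a frequency scale $R=t^{p}$ (the paper works with $p$ directly, you with $R$), apply the Klainerman-type corollary to the high-frequency piece to get $R^{-1/2}t^{-1/2}$, apply Theorem \ref{growth} with $a=1/2$ plus a Bernstein bound on the low-frequency piece to get $R^{3/8}t^{-1/4}$, and balance at $R=t^{-2/7}$ to obtain $t^{-5/14}$; the two intermediate exponents and the optimization match the paper's exactly. Your write-up is a bit more explicit than the paper's in two places that the paper leaves implicit — the reduction of the second-order equation to one-way propagators $u^L_\pm$ so that Theorem \ref{growth} literally applies, and the commutator bookkeeping ($[x\partial_x,\chi_R(D)]$, $[x\partial_x,|D|^{-1/2}]$) needed to close the norms — which is a modest improvement in rigor rather than a different method.
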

\begin{proof}
Fix $t>1$. Let $\chi(\xi)$ denote the indicator function for the ball of radius $t^p$ centered at $0$. Then, let $v(t,x)$ be the solution to \[(\partial_t^2+|D|)v =0 \text{ with }(\widehat{v}(0,\xi), \partial_t\widehat{v}(0,\xi)) = (\widehat{u_0}\chi, \widehat{u_1}\chi)\] and $w(t,x)$ be the solution to \[(\partial_t^2+|D|)w =0 \text{ with }(\widehat{w}(0,\xi), \partial_t\widehat{w}(0,\xi)) = (\widehat{u_0}(1-\chi), \widehat{u_1}(1-\chi)).\] Notice that since all of these differential equations are linear, $u(t,x) = w(t,x) +v(t,x)$. Therefore, 
\begin{equation}
|u(t,yt^2)| \leq |w(t,yt^2)|+ |v(t,yt^2)|
\end{equation}
Since the initial data for the first term is bounded away from $0$ in frequency, we will use the pointwise bound:
\begin{align*}
|w(t,yt^2)| \leq& \frac{1}{\half{t}}\left(\sum_{1\leq|k|\leq 2}(\|\mhalf{|D|}\Gamma^kw_1\|_{L^2}+\|\Gamma^k w_0\|_{L^2})\right)\\
&+\frac{1}{\half{t}}\left(\sum_{|k|\leq 1}(\|\mhalf{|D|}\Gamma^k\Omega w_1\|_{L^{2}}+ \|\Gamma^k\Omega w_0\|_{L^{2}})\right)
\end{align*} Since $\widehat{w_0}$ is supported in $|\xi|>t^p$, we can bound $\|\Omega w_0\|$ by
\[\|\Omega w_0\|_{L^2} \leq \sum_{|k|= 1}\|\Gamma^k w_0\|_{L^2}+t^{-p/2}\|w_0\|_{L^2}. \] Then, the full bound on the first term is  
\begin{equation}\label{wbd2}
|w(t,yt^2)|\leq \frac{1+t^{-p/2}}{\half{t}} \left(\sum_{|k|\leq 2}(\|\mhalf{|D|}\Gamma^kw_1\|_{L^2}+\|\Gamma^k w_0\|_{L^2})\right).
\end{equation}

For the function $v(t,x)$, first observe that if $|y|>1$, the decay is $\mhalf{t}$. The choice of $1$ here is slightly arbitary; what will matter more is a lower bound on $|y|$ from the analysis of the critical point. Since $v(t,x)$ has initial data compactly supported on the Fourier transform side, for sufficiently small values of $|y|$, we also have $\mhalf{t}$ decay. Precisely, if $|y|\leq (8T^{p/2+1})^{-1} $ 
\[|v(t,yt^2)|\leq \frac{1}{\half{t}}\sum_{|k|\leq 1}\left(\|L^k v_0\|_{H^{\frac{1}{4}}}+\|L^k v_1\|_{H^{\frac{-1}{4}}}\right).\]
Notice that because we have compact support in the Fourier transform, we can rewrite the right hand side here as 
\[|v(t,yt^2)|\leq \frac{t^{p/4}}{\half{t}}\sum_{|k|\leq 1}\left(\|L^k v_0\|_{L^2}+\|\mhalf{|D|}L^k v_1\|_{L^2}\right).\]
If $(8t^{p/2+1})^{-1}<|y|<1$, we use Theorem \ref{growth} and have
\begin{equation}\label{vbd}
 |v(t,yt^2)|\leq \frac{(1+t^{\frac{p+2}{8}})t^{p/4}}{\half{t}}\sum_{|k|\leq 1}\left(\|L^k v_0\|_{L^2}+\|\mhalf{|D|}L^k v_1\|_{L^2}\right). 
\end{equation}
 Different values of $p$ will cause different terms to dominate. When $p>0$, the contribution from \eqref{wbd2} will be $\mhalf{t}$, but \eqref{vbd} decays like $t^{\frac{3p}{8}}t^{\frac{1}{4}}\mhalf{t}$. These cannot be equal for any positive value of $p$. On the other hand, if $-2<p<0$, we have $t^{-\frac{p+1}{2}}$ from \eqref{wbd2} and $t^{\frac{3p}{8}}t^{\frac{1}{4}}\mhalf{t}$ from \eqref{vbd}, which are equal for $p = -2/7$. 

Notice that choosing $p=-2/7$ improves the decay in the case $|y|>1$ to $T^{-4/7}$ times $L^2$ norms. By taking $p=-2/7$, we can conclude that \[\sup_{y\in\R}|u(t,yt^2)|\leq \frac{1}{t^{5/14}}\sum_{|k|\leq 2}\left(\|\Gamma^k u_0\|_{L^2}+\|\Gamma^k\mhalf{|D|} u_1\|_{L^2}\right).\]
\end{proof}

\begin{rem}
Observe that in almost every term we can get the desired decay. In the case of $|\xi|>T^p$, whenever $p>0$, we get better than $\mhalf{T}$ decay, but at the cost of severely worse decay in the $|\xi|<T^p$ part. We might as well decompose around $|\xi| \sim1$, which gives the desired decay from the Klainerman type bounds with the smallest penalty on the remainder. In that remainder, only certain values of $|y|$ contribute to the growth, namely $(8T)^{-1}<|y|<1$. It is worth noting that this range is barely larger than the region described by the optimal choice $p = -2/7$, where we have $(8T)^{-6/7}<|y|<1$, but the miniscule reduction in the range of $y$ introduces $T^{1/7}$ of growth on the Klainerman term. Clearly there is more to understand with data compactly supported in frequency. 
\end{rem}

\subsection{Low frequency data: a second attempt}
What truly matters in this regime is whether or not the initial data has a singularity at the origin and how rapidly that singularity grows as the frequency approaches $0$. 

\begin{theorem}\label{odddecay}
 Let $u(t,x)$ be a solution of
\begin{equation}\label{halfWW}
\left\{\begin{array}{rl}
\partial_t u -i\half{|D|}u&=0\\
u(0,x) &= u_0(x)
 \end{array}
\right. 
\end{equation}
with $u_0(x)\in \dot{H}^{\frac{1}{4}}$ and $\supp \widehat{u_0}(\xi)\subseteq (-1,1)$ and 
$\widehat{u_0}= \mhalf{|\xi|}\sgn\xi |\xi|^\gamma \widehat{\psi}(\xi)$ for some compactly supported $\psi$.  In addition, let $\frac{C}{t}\leq |y| \leq 1$. Then 
\begin{equation}
 u(t,x) \in L^q(\R_x) \mbox{ where }\left\{\begin{array}{cc}
                    q\in \left(\frac{2}{1-2|\gamma|},\infty\right) & \mbox{ when } \gamma <0\\
		    q\in (2,\infty) &\mbox{ when } \gamma >0.
                    \end{array}.\right. 
\end{equation} 
\end{theorem}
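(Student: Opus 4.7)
The plan is to obtain the $L^q$ conclusion directly from the Hausdorff-Young inequality applied to the Fourier representation of $u(t,\cdot)$, reading the admissible exponents off the strength of the singularity of $\widehat{u_0}$ at the origin.

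First I would write the solution of \eqref{halfWW} via Fourier inversion,
\[
u(t,x) \;=\; \int e^{i(x\xi + t|\xi|^{1/2})}\widehat{u_0}(\xi)\,d\xi \;=\; \mathcal{F}^{-1}\!\bigl(e^{it|\cdot|^{1/2}}\widehat{u_0}\bigr)(x),
\]
and note that the Fourier multiplier $e^{it|\xi|^{1/2}}$ has modulus one, hence preserves every $L^{q'}$ norm of $\widehat{u_0}$. The Hausdorff-Young inequality then yields
\[
\|u(t,\cdot)\|_{L^q(\R_x)} \;\leq\; C_q\,\|\widehat{u_0}\|_{L^{q'}(\R_\xi)} \qquad \text{for every } q\in[2,\infty],\; q' = q/(q-1)\in[1,2].
\]

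Next I would bound the right-hand side using the explicit structure of the initial data. Since $\widehat{\psi}$ is smooth on the compact support of $\widehat{u_0}\subseteq(-1,1)$, the norm reduces, up to a bounded factor, to
\[
\|\widehat{u_0}\|_{L^{q'}}^{q'} \;\asymp\; \int_{-1}^{1}|\xi|^{q'(\gamma - 1/2)}\,d\xi,
\]
which is finite exactly when $q'(\gamma - 1/2) > -1$, i.e., $q' < 2/(1-2\gamma)$. Translating this back to $q$: for $\gamma<0$ one gets $q' < 2/(1+2|\gamma|) < 2$ and therefore $q > 2/(1-2|\gamma|)$; for $\gamma>0$ the singularity-driven constraint is weaker than the Hausdorff-Young requirement $q'\leq 2$, leaving only $q>2$ (with $q=2$ itself available via Plancherel, since $u_0\in L^2$ when $\gamma>0$, and the endpoint $q=\infty$ left out in the conservative statement even though $\widehat{u_0}\in L^1$ for $\gamma > -1/2$, which is ensured by $u_0 \in \dot H^{1/4}$).

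The main difficulty I anticipate is interpretive rather than computational: the Hausdorff-Young bound above holds globally in $x\in\R$ and does not use the auxiliary condition $C/t\leq|y|\leq 1$. That condition rather identifies the spatial sector $Ct\leq|x|\leq t^2$ in which the singularity of $\widehat{u_0}$ at $\xi=0$ is the dominant contribution to $|u(t,x)|$ and so pins down the critical exponent, while the complementary regions are better behaved. If one wished to verify that the threshold $q = 2/(1-2|\gamma|)$ cannot be improved, I would supplement the above with a stationary-phase analysis of the oscillatory integral defining $u(t,yt^2)$, carried out after the substitution $\xi = \pm\eta^2$, which turns the phase $x\xi+t|\xi|^{1/2}$ into the standard quadratic $\pm x\eta^2+t\eta$ and produces matching pointwise lower bounds in that sector along the lines of the sharpness argument in \S\ref{subsec:oandc}.
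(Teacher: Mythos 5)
Your proof is correct and takes a genuinely different, more elementary route than the paper's. You apply Hausdorff--Young directly to $u(t,\cdot)=\mathcal{F}^{-1}(e^{it|\cdot|^{1/2}}\widehat{u_0})$, use that the unimodular multiplier preserves $\|\widehat{u_0}\|_{L^{q'}}$, and read the admissible $q'$ off the power of the singularity $|\xi|^{\gamma-1/2}$ at the origin: $q'(\gamma-\tfrac12)>-1$ gives $q' < \tfrac{2}{1-2\gamma}$, and inverting the dual exponent recovers $q>\tfrac{2}{1-2|\gamma|}$ for $\gamma<0$ and $q\geq 2$ for $\gamma>0$. Two small remarks: the ``$\asymp$'' should be ``$\lesssim$'' (nothing in the hypotheses forces $\widehat{\psi}(0)\neq 0$, but only the upper bound is needed); and this yields the closed endpoints $q=2$ (for $\gamma>0$, by Plancherel) and $q=\infty$ (since $\widehat{u_0}\in L^1$ when $\gamma>-1/2$), which the theorem conservatively omits. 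The paper instead rewrites $u(t,yt^2)$ as a convolution of $t\,|D|^{1/2}\mathcal{H}u_0(t^2\cdot)$ against an explicit kernel $k$, shows $|k(x)|\leq C|x|^{-1/2}$ by stationary-phase/contour computations, and applies Hardy--Littlewood--Sobolev (once for $\gamma>0$, twice for $\gamma<0$). Your approach buys brevity and transparency of the exponent bookkeeping. What the paper's longer route buys is precisely the kernel representation itself, which is then reused verbatim in the subsequent lower-bound theorem (sharp lower bounds for low frequency data) to show the threshold $q=\tfrac{2}{1-2|\gamma|}$ is attained; Hausdorff--Young alone gives no matching lower bound. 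Your closing observation --- that one would need to supplement with a stationary-phase analysis on $u(t,yt^2)$ to verify sharpness and that the condition $C/t\leq |y|\leq 1$ only localizes the relevant spatial sector and is not used in the $L^q$ upper bound --- correctly identifies both of these points and is consistent with how the paper structures the argument.
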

\begin{rem}
 Heuristically, we expect $u(t,x)\in L^q(\R)$ to decay like $|x|^{-\frac{1}{q}}$. In the case of $-1/4<\gamma<0$, the reduced range of $q$ gives $|u(t,x)|$ would decay no faster than $|x|^{-\frac{1}{2}+|\gamma|}$, which would prevent $u(t,x)$ from being in $L^2$. In fact, if we consider $|u(t,yt^2)|$ and $\gamma = -1/4$, we would get that $|u(t,yt^2)|\leq y^{-\frac{1}{4}}\half{t}$, precisely the growth factor from our previous results. These heuristics suggest that the size of the singularity at the origin in frequency is what generates the troublesome growth factor. On the other hand, it appears that the solution to this issue is to consider data in $L^2$, which is an improvement over previous results. 
\end{rem}

The $L^q(\R)$ bounds on $u(t,x)$ are uniform in compact sets of $t$. We relate the function $u(t,x)$ to $u(t,yt^2)$ to take advantage of scaling in $t$ but return to $u(t,x)$ after manipulations have recast the problem in a nicer form. 

Theorem \ref{odddecay} follows from this proposition relating $u(t,yt^2)$ to a singular integral of $\psi$. 
\begin{ppn}
 Let $u$ be a solution to \eqref{halfde} with $u_0\in\dot{H}^{\frac{1}{4}}(\R)$ and $\supp\widehat{u_0}(\xi)\subseteq(-1, 1)$. Then, 
\begin{equation}
 |u(t,yt^2)| \leq C\int\frac{1}{\half{|y-z|}}\left|t\half{|D|}\mathcal{H}u_0(t^2z)\right|dz.
\end{equation} 
\end{ppn}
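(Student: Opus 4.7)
The plan is to represent $u(t,yt^2)$ as the integral of $\half{|D|}\mathcal{H}u_0$ against a universal, $t$-independent oscillatory kernel, and then to bound that kernel pointwise by $\mhalf{|y-z|}$.

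First I would use Fourier inversion to write
\[u(t,yt^2) = \int e^{i(yt^2\xi + t\half{|\xi|})}\widehat{u_0}(\xi)\,d\xi,\]
and set $F := \half{|D|}\mathcal{H}u_0$, so that $\widehat{F}(\xi) = -i\sgn\xi\,\half{|\xi|}\widehat{u_0}(\xi)$ and hence $\widehat{u_0}(\xi) = i\sgn\xi\,\mhalf{|\xi|}\widehat{F}(\xi)$. The hypothesis $\supp\widehat{u_0}\subseteq(-1,1)$ makes $\widehat{F}$ compactly supported and $F$ a bounded $L^2$ function, which lets me insert the formula $\widehat{F}(\xi) = \int e^{-iw\xi}F(w)\,dw$ and swap the order of integration (absolute convergence is clear away from $\xi=0$, and the integrable singularity of $\mhalf{|\xi|}$ at $0$ handles the origin).

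After the swap I would perform the successive rescalings $w = t^2 z$ and $\xi = \eta/t^2$. These are chosen so that every $t$ disappears from the inner oscillatory integral: the phase becomes $(y-z)\eta + \half{|\eta|}$, the factor $\mhalf{|\xi|}$ contributes a $t$, and the two Jacobians cancel. Collecting prefactors yields the clean identity
\[u(t,yt^2) = it\int F(t^2 z)\,I(y-z)\,dz,\qquad I(w) := \int_{-\infty}^\infty e^{i(w\eta + \half{|\eta|})}\frac{\sgn\eta}{\half{|\eta|}}\,d\eta.\]
Taking absolute values and recalling $F = \half{|D|}\mathcal{H}u_0$, the proposition reduces to the pointwise kernel bound $|I(w)|\leq C\mhalf{|w|}$.

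The kernel estimate is the heart of the argument and the main obstacle. Splitting the $\eta$-integral at $0$ and substituting $\eta=\pm\mu^2$ on each half reduces $I(w)$ to the difference $2\int_0^\infty e^{iw\mu^2+i\mu}\,d\mu - 2\int_0^\infty e^{-iw\mu^2+i\mu}\,d\mu$ of incomplete Fresnel integrals. Completing the square in $\mu$ and rescaling $\mu = s/\half{|w|}$ converts each of these into $\mhalf{|w|}$ times a truncated Fresnel integral $\int_a^\infty e^{\pm is^2}\,ds$, which is uniformly $O(1)$ by a single integration by parts. The subtlety is that these integrals are only conditionally convergent at infinity, and for one sign of $w$ the stationary point of the phase $w\eta+\half{|\eta|}$ actually lies inside the integration region; both issues are subsumed by the uniform Fresnel bound, which one may apply first to a smooth truncation of $\widehat{F}$ (legitimate since $\widehat{F}$ is compactly supported) and then pass to the limit. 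Substituting $|I(y-z)|\leq C\mhalf{|y-z|}$ into the identity above produces the claimed inequality.
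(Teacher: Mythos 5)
Your proposal is correct and follows essentially the same path as the paper: both represent $u(t,yt^2)$ as a convolution of a $t$-independent oscillatory kernel $k(y-z)=\int e^{i((y-z)\xi+\half{|\xi|})}\mhalf{|\xi|}\sgn\xi\,d\xi$ against $t\half{|D|}\mathcal{H}u_0(t^2z)$, and both obtain the pointwise bound $|k(w)|\lesssim\mhalf{|w|}$ by the substitution $\eta=\pm\mu^2$, completing the square, and estimating the resulting truncated Fresnel integrals. (The paper further splits $k=-k_1+k_2$ with $k_1$ the full-line Fresnel contribution evaluated exactly by contour shift, but that refinement is only needed for the lower-bound theorem that follows; for the upper bound your direct truncated-Fresnel estimate suffices and is cleaner.)
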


\begin{proof}
Since $u_0\in \dot{H}^{\frac{1}{4}}(\R)$ and is compactly supported in frequency, $\widehat{u_0}$ must take the following form for $-1/4<\gamma$ and $\psi\in C_0^\infty(\R)$ with $\supp\psi\subseteq (-1,1)$:
\begin{equation}\label{u0def}
\widehat{u_0}= \mhalf{|\xi|}\sgn\xi |\xi|^\gamma \widehat{\psi}(\xi).
\end{equation} The range of $\gamma$ is easily deduced by considering where the $\dot{H}^{\frac{1}{4}}$ norm of $u_0$ is finite. We make no additional assumptions on $\psi$. 

To prove the theorem, we begin by rewriting the solution in a different form. Observe that 
\begin{align*}
 u(t,yt^2) &= \int e^{i(yt^2\xi +t\half{|\xi|}) }\widehat{u_0}(\xi)d\xi\\ 
&= \int e^{i(y\xi +\half{|\xi|}) }\mhalf{|\xi|}\sgn\xi \half{|\xi|}\sgn\xi\frac{1}{t^2} \widehat{u_0}\left(\frac{\xi}{t^2}\right)d\xi\\ 
 &=\int e^{i(y\xi +\half{|\xi|}) }\mhalf{|\xi|}\sgn\xi \int \delta(\xi-\eta)\half{|\eta|}\sgn\eta\frac{1}{t^2} \widehat{u_0}\left(\frac{\eta}{t^2}\right)d\eta d\xi\\
&=\int e^{i(y\xi +\half{|\xi|})} \mhalf{|\xi|}\sgn\xi \iint Ce^{-iz(\xi-\eta)}dz\half{|\eta|}\sgn\eta\frac{1}{t^2} \widehat{u_0}\left(\frac{\eta}{t^2}\right)d\eta d\xi\\
&=\iint e^{i((y-z)\xi +\half{|\xi|})} \mhalf{|\xi|}\sgn\xi d\xi \int  Ce^{iz\eta}\half{|\eta|}\sgn\eta\frac{1}{t^2} \widehat{u_0}\left(\frac{\eta}{t^2}\right)d\eta dz\\
&= \int k(y-z) t\half{|D|}\mathcal{H} u_0(t^2z)dz.\end{align*} The second line comes from rescaling in $t$ and
 $k(y-z) = \int e^{i((y-z)\xi +\half{|\xi|})} \mhalf{|\xi|}\sgn\xi d\xi$. 

 To complete the proof of this theorem, we simply need to show that $|k(y-z)|\leq C\mhalf{|y-z|}$. We will conduct our analysis on $k(x)$ for simplicity in notation. Now, if we change variables $\zeta = \half{|\xi|}$,  
\begin{align}
k(x) &= \int e^{i(x\xi +\half{|\xi|})} \mhalf{|\xi|}\sgn\xi d\xi\nonumber\\
&= 2\int_0^\infty e^{i(x\zeta^2 +\zeta)}d\zeta - 2\int_0^\infty e^{-i(x\zeta^2 -\zeta)}d\zeta\nonumber\\
&=2\int_0^\infty e^{i\zeta}\left(e^{ix\zeta^2} - e^{-ix\zeta^2}\right)d\zeta.\label{oddk}
\end{align}
From \eqref{oddk}, it is clear that $k(-x)=-k(x)$. Since the phase functions in the calculation above are quadratic, it is possible to solve exactly for pieces of the kernel. Now,  
\begin{align*}
\sgn x k(x)&= 2e^{\frac{-i}{4|x|}}\int_0^\infty e^{i|x|\left(\zeta +\frac{1}{2|x|}\right)^2}d\zeta - 2e^{\frac{i}{4|x|}}\int_0^\infty e^{-i|x|\left(\zeta -\frac{1}{2|x|}\right)^2}d\zeta\\
&=2e^{\frac{-i}{4|x|}}\int_{\frac{1}{2|x|}}^\infty e^{i|x|\zeta^2}d\zeta - 2e^{\frac{i}{4|x|}}\int_{-\frac{1}{2|x|}}^\infty e^{-i|x|\zeta ^2}d\zeta\\
&= - 2e^{\frac{i}{4|x|}}\int_{-\infty}^\infty e^{-i|x|\zeta ^2}d\zeta +2e^{\frac{i}{4|x|}}\int^{-\frac{1}{2|x|}}_{-\infty} e^{-i|x|\zeta ^2}d\zeta+2e^{\frac{-i}{4|x|}}\int_{\frac{1}{2|x|}}^\infty e^{i|x|\zeta^2}d\zeta \\
&=-k_1(|x|) +k_2(|x|).
\end{align*}
The term $k_1(|x|)$ is straightforward to solve exactly using contours:
\begin{align*}
-2e^{\frac{i}{4|x|}}\int_{-\infty}^\infty e^{-i|x|\zeta ^2}d\zeta &= 2e^{\frac{i}{4|x|}}\int_{e^{\frac{-i\pi}{4}}\R} e^{-i|x|\zeta ^2}d\zeta\\
&= 2e^{\frac{i}{4|x|}}e^{-\frac{i\pi}{4}}\int_{-\infty}^{\infty} e^{-|x|\zeta ^2}d\zeta\\
&=\frac{ 2\sqrt\pi e^{\frac{i}{4|x|}}e^{-\frac{i\pi}{4}}}{\half{|x|}} . 
\end{align*}
Therefore, $|k_1(|x|)|\leq 2\sqrt\pi \mhalf{|x|}$. It only remains to control the second term, $k_2(|x|)$. Observe that $k_2(|x|)$ is the sum of integral and its complex conjugate, so it is sufficient to consider just one of the terms and show it is bounded by $C\mhalf{|x|}$. By construction, we can use integration by parts on the terms of $k_2$ as they avoid the critical point of the phase function. Thus,
\begin{align*}
\left| 2e^{\frac{-i}{4|x|}}\int_{\frac{1}{2|x|}}^\infty e^{i|x|\zeta^2}d\zeta \right| & = 2\left|\int_{\frac{1}{2|x|}}^\infty e^{i|x|\zeta^2}d\zeta\right|\\
&= 2\left|\int_{\frac{1}{2|x|}}^\infty \frac{\partial_\zeta\left(e^{i|x|\zeta^2}\right)}{2i|x|\zeta}d\zeta\right|\\
&=2\left|\left.\frac{e^{i|x|\zeta^2}}{2i|x|\zeta}\right|_{\frac{1}{2|x|}}^\infty + \int_{\frac{1}{2|x|}}^\infty \frac{e^{i|x|\zeta^2}}{2i|x|\zeta^2}d\zeta\right| \leq 4
\end{align*}
If $|x|\leq \pi/16$, this calculation implies that $k_2(|x|)$ is bounded by $C\mhalf{|x|}$. In order to see the behavior of this term for large $x$, we will calculate it using contours as we did for $k_1$. Then, 
\begin{equation}
 2e^{\frac{-i}{4|x|}}\int_{\frac{1}{2|x|}}^\infty e^{i|x|\zeta^2}d\zeta   = 2e^{\frac{-i}{4|x|}}\left(e^{\frac{i\pi}{4}}\int_{\frac{1}{2|x|}}^\infty e^{-|x|\zeta^2}d\zeta +\int_{\Gamma} e^{i|x|\zeta^2}d\zeta \right)
\end{equation}
where $\Gamma =\{|\zeta| = \frac{1}{2|x|}, \theta\in(0,\pi/4)\}$. By a similar argument similar to the one used for $k_1$, the first term is bounded by $e^{\frac{-1}{8|x|}}\mhalf{|x|}$. The second term can be rewritten as an integral in $\theta$ and bounded like so:
\begin{align*}
\left| \int_{\Gamma} e^{i|x|\zeta^2}d\zeta \right| &=\left|\int_{0}^{\frac{\pi}{4}} e^{\frac{ie^{2i\theta}}{4|x|}}\frac{ie^{i\theta}}{2|x|}d\theta \right|\\
&\leq \frac{\pi}{8|x|}
\end{align*}
 If $|x|>\frac{\pi}{16}$, we get precisely that the sum of these two terms is less than $2\sqrt\pi \mhalf{|x|}$. Therefore, 
\begin{equation}
 |k(x)|\leq C\mhalf{|x|}.
\end{equation} and we can conclude that 
\begin{equation}
 |u(t,yt^2)|\leq  C\int\frac{t|\half{|D|}\mathcal{H}u_0(t^2z)|}{\half{|y-z|}}dz .
\end{equation}
\end{proof}
Given the relationship between $u(t,yt^2)$ and the fractional integral of $\half{|D|}\mathcal{H}u_0$, the proof of Theorem \ref{odddecay} reduces to careful application of the Hardy-Littlewood-Sobolev lemma. 
\begin{proof}[Proof of Theorem \ref{odddecay}]
By a straightforward change of variables,
\[C\int\frac{t|\half{|D|}\mathcal{H}u_0(t^2z)|}{\half{|y-z|}}dz = C\int\frac{|\half{|D|}\mathcal{H}u_0(z)|}{\half{|yt^2-z|}}dz.\] Let 
$F(x) = \int\frac{|\half{|D|}\mathcal{H}u_0(z)|}{\half{|x-z|}}dz$. Observe that $|u(t,x)|\leq |F(x)|$, independent of $t$. 

Now we apply the Hardy-Littlewood-Sobolev lemma (Proposition \ref{HLS}) and get $\|F\|_{L^q}\leq \|\half{|D|}\mathcal{H}u_0\|_{L^p}$ for $\frac{1}{q} = \frac{1}{p} -\frac{1}{2}$. To complete the proof, it suffices to identify to which $L^p$ spaces $\half{|D|}\mathcal{H}u_0$ belongs. 

Recall $\widehat{u_0}$ is of the form \eqref{u0def}. When $\gamma>0$, $\half{|D|}\mathcal{H}u_0$ is some order derivative of $\psi$. Since $\widehat{\psi}\in C_0^\infty$, we know that $\psi$ is in Schwartz class, and thus $\half{|D|}\mathcal{H}u_0$ is in $L^p$ for all $p$. Then, we can conclude that $F\in L^q$ for all $q\in(2,\infty)$. 

When $-\frac{1}{4}<\gamma<0$, we will need to apply the Hardy-Littlewood-Sobolev lemma for a second time. By definition, $\half{|D|}\mathcal{H}u_0(z) = I_{1-|\gamma|}\psi (z)$, so we know that  for $1<r<p<\infty$
\[\|\half{|D|}\mathcal{H}u_0\|_{L^p}\leq \|\psi\|_{L^r}\mbox{ when  }\frac{1}{p} = \frac{1}{r} -\frac{|\gamma|}{1}.\]
By combining the bound on $F$ with the bound on $\half{|D|}\mathcal{H}u_0$, we have 
\[\|F\|_{L^q}\leq \|\half{|D|}\mathcal{H}u_0\|_{L^r}\mbox{ for }\frac{1}{q} = \frac{1}{r}-|\gamma| -\frac{1}{2}.\] 
Since $\psi$ is Schwartz, we know that $\psi\in L^r$ for $1\leq r\leq \infty$. In order to satisfy the lemma, we limit $r$ to the range $(1, \frac{1}{\frac{1}{2}+|\gamma|})$, which implies that $q\in (\frac{2}{1- 2|\gamma|},\infty)$
\end{proof}

In addition to the upper bounds found above, we also have the following lower bounds for more specialized data. 
 
\subsection{Sharp lower bounds for low frequency data}
\begin{theorem}
Let $u$ be a solution to the initial value problem
\[\left\{\begin{array}{rl}
\partial_t u -i\half{|D|}u&=0\\
u(0,x) &= u_0(x)
 \end{array}
\right.\] where $\widehat{u_0}(\xi) = \mhalf{|\xi|}\sgn\xi \widehat{\varphi}(\xi)$ and $\varphi(x)$  compactly supported in the interval $(-\frac{1}{M},\frac{1}{M})$, $M\in\mathbb{N}$ and $\varphi(x)$ does not change sign. Let $\frac{C}{t} \leq |y|\leq \delta$ where $\delta>0$ and  independent of $t$. In addition, assume that $\delta +\frac{1}{M}\leq \frac{\pi}{64}$. Then, 
\begin{equation}
 |u(t,yt^2)|\geq \frac{\sqrt\pi}{2}\int \frac{t|\varphi(t^2 z)|}{\half{|y-z|}}dz.
\end{equation}
\end{theorem}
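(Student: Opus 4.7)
The plan is to combine the representation from the preceding proposition with the explicit formula for $k$ derived in its proof, and to extract a lower bound via an oscillatory-phase argument that exploits the fixed sign of $\varphi$.

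First, I would specialize the identity to our data. Since $\widehat{u_0}(\xi) = \mhalf{|\xi|}\sgn\xi\,\widehat\varphi(\xi)$, a direct Fourier computation gives $\half{|D|}\mathcal{H}u_0 = -i\varphi$, so the identity from the preceding proposition becomes
\[
  u(t,yt^2) = -i\int k(y-z)\,t\varphi(t^2 z)\,dz,
\]
and it suffices to bound the modulus of the right side below. Recall from the proof of that proposition the decomposition
\[
  k(x) = \sgn x\Bigl(\tfrac{2\sqrt\pi\,e^{-i\pi/4}e^{i/(4|x|)}}{\half{|x|}} + k_2(|x|)\Bigr),
\]
where $k_2$ consists of the two contour tails, each bounded by $4$ via integration by parts, so $|k_2(|x|)|\le 8$ uniformly. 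Assume without loss of generality $y>0$ and $\varphi\ge 0$. Since $\supp\varphi(t^2\cdot)\subset(-1/(Mt^2),1/(Mt^2))$ and $|y|\ge C/t$, for $t$ large we have $y-z>0$ throughout the support, so $\sgn(y-z)\equiv 1$, and $|y-z|\le\delta + 1/(Mt^2)\le\pi/64$.

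The central step is controlling the oscillation of $e^{i/(4(y-z))}$, whose phase is large when $|y-z|$ is small. I would factor
\[
  e^{i/(4(y-z))} = e^{i/(4y)}\,e^{i\eta(z)},\qquad \eta(z)=\frac{z}{4y(y-z)},
\]
and bound $|\eta(z)|\le 1/(2MC^2)$ using $|z|\le 1/(Mt^2)$, $|y|\ge C/t$, and $|y-z|\ge |y|/2$ for $t\ge 2/(MC)$. Choosing $C$ large enough to guarantee $\cos\eta(z)\ge 3/4$, then pulling the constant phases $e^{-i\pi/4}e^{i/(4y)}$ out of the main integral and taking the real part (using $\varphi\ge 0$) yields
\[
  \Bigl|\int\tfrac{2\sqrt\pi\,e^{-i\pi/4}e^{i/(4(y-z))}}{\half{(y-z)}}t\varphi(t^2z)\,dz\Bigr|\ge 2\sqrt\pi\int\tfrac{\cos\eta(z)\,t\varphi(t^2z)}{\half{(y-z)}}dz\ge \tfrac{3\sqrt\pi}{2}\int\tfrac{t\varphi(t^2z)}{\half{(y-z)}}dz.
\]

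Finally, I subtract the $k_2$ contribution, which is bounded by $8\|\varphi\|_{L^1}/t$. Because $|y-z|\le\pi/64$, one has $\|\varphi\|_{L^1}/t\le\half{(\pi/64)}\int t\varphi(t^2z)/\half{(y-z)}\,dz = (\sqrt\pi/8)\int t\varphi(t^2z)/\half{(y-z)}\,dz$, so the $k_2$-remainder is at most $\sqrt\pi\int t\varphi(t^2z)/\half{(y-z)}\,dz$. Subtracting, the difference $\tfrac{3\sqrt\pi}{2}-\sqrt\pi=\tfrac{\sqrt\pi}{2}$ delivers the stated lower bound. The main obstacle is the bookkeeping: the hypothesis $\delta+1/M\le\pi/64$ is precisely calibrated so this arithmetic closes with the honest constants from the decomposition of $k$, and $|y|\ge C/t$ with $C$ sufficiently large (depending on $M$) is exactly what is needed to make $e^{i/(4(y-z))}$ essentially constant over $\supp\varphi(t^2\cdot)$; interlocking these two smallness conditions to produce the exact constant $\tfrac{\sqrt\pi}{2}$ is the delicate part.
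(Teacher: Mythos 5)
Your proposal is correct and takes a genuinely different route from the paper in the crucial lower bound on the main term. The paper's own proof also splits $k=\sgn x\,(-k_1+k_2)$ with $|k_1|=2\sqrt\pi\mhalf{|x|}$ and bounds each tail of $k_2$ by $4$ via integration by parts, but then estimates the main term $\int k_1(y-z)\,t\varphi(t^2z)\,dz$ by replacing $e^{i/(4|y-z|)}\sgn(y-z)$ with the constant $e^{i/(4|y|)}\sgn y$ and bounding the difference via the mean value theorem with a pointwise estimate $|z|/p(z)^2<\tfrac12$ (requiring $M>4$ and $|y|\ge 1/t$); this reduces the main term lower bound from $2\sqrt\pi\int t|\varphi|\mhalf{|y-z|}\,dz$ down to $\sqrt\pi\int t|\varphi|\mhalf{|y-z|}\,dz$. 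You instead factor out the constant phase, write $e^{i/(4(y-z))}=e^{i/(4y)}e^{i\eta(z)}$ with $\eta(z)=z/(4y(y-z))$, and bound $\cos\eta(z)\ge \tfrac34$ directly using $|z|\le 1/(Mt^2)$, $|y|\ge C/t$, and $|y-z|\ge|y|/2$; this retains $\tfrac{3\sqrt\pi}{2}\int t|\varphi|\mhalf{|y-z|}\,dz$ for the main term, and you pair it with the honest bound $|k_2|\le 8$ (the sum of two tails, each at most $4$).

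One point worth flagging: the paper's second claim states $\bigl|\int k_2(y-z)\,t\varphi(t^2z)\,dz\bigr|\le 4\int t|\varphi(t^2z)|\,dz$, but since $k_2$ is the \emph{sum} of two tails each bounded by $4$, the honest constant is $8$, not $4$. Propagating $8$ through the paper's argument together with its $\sqrt\pi$ lower bound for the main term would yield $\sqrt\pi-\sqrt\pi=0$, which is vacuous. Your sharper $\tfrac{3\sqrt\pi}{2}$ for the main term combined with the correct $8$ on the tails is precisely what restores $\tfrac{3\sqrt\pi}{2}-\sqrt\pi=\tfrac{\sqrt\pi}{2}$ and makes the theorem's stated constant close. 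So your reorganization is not merely cosmetic; it repairs a factor-of-two slip in the paper's own bookkeeping. The only caveat you should make explicit is that the $C$ in the hypothesis $|y|\ge C/t$ must be an absolute constant large enough (together with $M\in\mathbb N$) to guarantee $|\eta|\le\tfrac{1}{2MC^2}\le\arccos\tfrac34$ and that $|z|\le|y|/2$ on the support; this is analogous to the paper's requirements $M>4$ and $|y|\ge 1/t$, and both calibrations are implicit in the "$C$" of the theorem statement.
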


\begin{rem}
 The sign assumption on $\varphi$ is a technical condition which allows us to move absolute values inside the integral without changing the value. It may be possible to avoid this condition through other techniques. 
\end{rem}
\begin{proof}
We will show that given sufficient large $M$, this part of the kernel convolved with the scaled initial data bounds the solution below. To begin, we verify that $\frac{1}{2}|k_1|>|k_2|$.  Observe that $k_2(|x|)$ is the sum of integral and its complex conjugate, so it is sufficient to consider just one of the terms and show it is bounded by $\frac{1}{4}|k_1|$. Then,
\begin{align*}
\left| 2e^{\frac{-i}{4|x|}}\int_{\frac{1}{2|x|}}^\infty e^{i|x|\zeta^2}d\zeta \right| & = 2\left|\int_{\frac{1}{2|x|}}^\infty e^{i|x|\zeta^2}d\zeta\right|\\
&= 2\left|\int_{\frac{1}{2|x|}}^\infty \frac{\partial_\zeta\left(e^{i|x|\zeta^2}\right)}{2i|x|\zeta}d\zeta\right|\\
&=2\left|\left.\frac{e^{i|x|\zeta^2}}{2i|x|\zeta}\right|_{\frac{1}{2|x|}}^\infty + \int_{\frac{1}{2|x|}}^\infty \frac{e^{i|x|\zeta^2}}{2i|x|\zeta^2}d\zeta\right|\leq 4.
\end{align*} As long as $|x|\leq \pi/4$, $k_1$ is the dominant part of the kernel. In our convolution operator, $x = y-z$ with $|z|\leq \frac{1}{Mt^2}$, so $|y-z|\leq |y|+|z| \leq \delta +\frac{1}{Mt^2}$. By our assumption on $M$ and $\delta$ , $k_1$ is the dominant part of the kernel. 

Now, since \begin{align*}
|u(t,yt^2)| &= \left|\int k(y-x) t\varphi(t^2z)dz\right| \\
&\geq \left|\int k_1(y-x) t\varphi(t^2z)dz\right| - \left|\int k_2(y-x) t\varphi(t^2z)dz\right|,            
           \end{align*}
in order to show we have a lower bound we also need that
\[\left|\int k_2(y-x) t\varphi(t^2z)dz\right|\leq \frac{1}{2}\left|\int k_1(y-x) t\varphi(t^2z)dz\right|.\]  In fact we will show first that $\left|\int k_1(y-x) t\varphi(t^2z)dz\right|$ is bounded below and then show that $\left|\int k_2(y-x) t\varphi(t^2z)dz\right|$ is bounded by half of this lower bound for the $k_1$ term. 

\begin{clm}
Assume that $\varphi$ does not change sign (that is, it is strictly positive or negative inside its support). For $k_1(x)$ defined as above, we have 
\begin{equation}\label{k1bd}
\left|\int k_1(y-z) t\varphi(t^2z)dz\right|\geq \sqrt\pi\int \frac{t|\varphi(t^2z)|}{\half{|y-z|}}dz.
\end{equation}
\end{clm}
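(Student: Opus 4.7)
The plan is to exploit the explicit polar form of $k_1$ derived in the preceding proof, combined with the narrowness of the support of $\varphi(t^2\cdot)$ relative to $|y|$, to show that the integrand in \eqref{k1bd} has essentially constant phase on that support; then no substantial cancellation occurs and the lower bound follows by passing to the real part. Two routine reductions come first: since $\varphi$ has a fixed sign on its support, replacing $\varphi$ by $|\varphi|$ changes neither side of \eqref{k1bd}, so I may assume $\varphi\ge 0$; and since $k_1$ depends only on $|y-z|$, reflecting $y\mapsto -y$ if necessary lets me assume $y>0$. Under the hypotheses $y\ge C/t$ and $\supp\varphi(t^2\cdot)\subseteq(-1/(Mt^2),1/(Mt^2))$, with the universal constants chosen so that $MCt\ge 2$, one has $y-z\ge y/2>0$ throughout the integration, so no sign ambiguity in the kernel arises.

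Next I would insert the explicit formula $k_1(y-z)=-2\sqrt{\pi}\,e^{-i\pi/4}e^{i/(4(y-z))}/\sqrt{y-z}$ obtained earlier and factor out the unit-modulus prefactor $-2\sqrt{\pi}\,e^{-i\pi/4}e^{i/(4y)}$ at the reference point $z=0$, yielding
\[
\left|\int k_1(y-z)\,t\varphi(t^2z)\,dz\right|=2\sqrt{\pi}\left|\int\frac{t\varphi(t^2z)\,e^{i\theta(z)}}{\sqrt{y-z}}\,dz\right|,
\]
where
\[
\theta(z):=\frac{1}{4(y-z)}-\frac{1}{4y}=\frac{z}{4y(y-z)}.
\]
The crux of the argument is a uniform bound on $|\theta(z)|$: combining $|z|\le 1/(Mt^2)$, $y\ge C/t$, and $y-z\ge y/2$ yields $|\theta(z)|\le |z|/(2y^2)\le 1/(2MC^2)$. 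Choosing $M$ and $C$ so that $1/(2MC^2)\le\pi/3$ (compatible with the hypothesis $\delta+1/M\le\pi/64$ and the implicit freedom to enlarge $C$), one gets $\cos\theta(z)\ge 1/2$ on the support of $\varphi(t^2\cdot)$.

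The main obstacle is precisely this phase-control step, since it requires balancing the lower bound $y\ge C/t$ (which prevents the kernel phase $1/(4|y-z|)$ from winding too fast in $z$) against the tight support bound $|z|\le 1/(Mt^2)$; the numerical constants in the theorem's hypotheses are tailored to make this balance work. Once the phase bound is in hand, because $\varphi\ge 0$, passing to the real part gives
\[
\left|\int\frac{t\varphi(t^2z)\,e^{i\theta(z)}}{\sqrt{y-z}}\,dz\right|\ge\int\frac{t\varphi(t^2z)\cos\theta(z)}{\sqrt{y-z}}\,dz\ge\frac{1}{2}\int\frac{t\,|\varphi(t^2z)|}{\sqrt{|y-z|}}\,dz,
\]
and multiplying by the modulus $2\sqrt{\pi}$ of the factored-out prefactor yields \eqref{k1bd}.
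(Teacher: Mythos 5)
Your argument is correct and its method is genuinely different from the paper's, though both implement the same underlying principle: because $\supp\varphi(t^2\cdot)$ is narrow relative to $|y|$, the phase of $k_1(y-z)$ is essentially constant over the range of integration. The paper realizes this by an additive decomposition: it writes $k_1(y-z)$ as a reference kernel carrying the fixed phase $e^{i/(4|y|)}\sgn y$ plus an error, applies the triangle inequality, and bounds the error by a mean-value estimate ($|e^{i/(4|y-z|)}\sgn(y-z) - e^{i/(4|y|)}\sgn y|\lesssim |z|/p(z)^2 < 1/2$ once $M>4$). You instead factor out the unimodular prefactor at $z=0$, pass to the real part, and use the cosine lower bound $\cos\theta(z)\ge 1/2$ coming from $|\theta(z)|\le 1/(2MC^2)\le\pi/3$. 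Both routes produce the same constant $\sqrt\pi$, but yours is shorter: it avoids the complex-valued MVT step entirely and imposes only a very mild numerical constraint (essentially $MC^2\gtrsim 1$, versus the paper's $M\ge 4$). One small imprecision worth tidying: $k_1(y-z)$ as used in the claim carries a $\sgn(y-z)$ factor, so it is an odd function of $y-z$ rather than a function of $|y-z|$ alone; your reduction to $y>0$ is still harmless because $\sgn(y-z)$ is constant on the support once $y-z\ge y/2>0$, but the stated justification (``$k_1$ depends only on $|y-z|$'') should be adjusted accordingly.
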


\begin{proof}
Recall that $k_1(y-z) = \frac{ 2\sqrt\pi e^{\frac{i}{4|y-z|}}e^{-\frac{i\pi}{4}}\sgn(y-z)}{\half{|y-z|}}.$ Then, 
\begin{align*}
 \int k_1(y-z) t\varphi(t^2z)dz =&\int \frac{ 2\sqrt\pi e^{\frac{i}{4|y-z|}}e^{-\frac{i\pi}{4}}\sgn(y-z)}{\half{|y-z|}} t\varphi(t^2z)dz \\
=&\int \frac{ 2\sqrt\pi e^{\frac{i}{4|y|}}e^{-\frac{i\pi}{4}}\sgn y}{\half{|y-z|}} t\varphi(t^2z)dz\\&+\int \frac{ 2\sqrt\pi \left(e^{\frac{i}{4|y-z|}}\sgn(y-z)- e^{\frac{i}{4|y|}}\sgn y\right)e^{-\frac{i\pi}{4}}}{\half{|y-z|}} t\varphi(t^2z)dz\\
 \left|\int k_1(y-z) t\varphi(t^2z)dz \right|\geq&\left|\int \frac{ 2\sqrt\pi e^{\frac{i}{4|y|}}e^{-\frac{i\pi}{4}}\sgn y}{\half{|y-z|}} t\varphi(t^2z)dz\right|\\
&-\left|\int \frac{ 2\sqrt\pi \left(e^{\frac{i}{4|y-z|}}\sgn(y-z)- e^{\frac{i}{4|y|}}\sgn y\right)e^{-\frac{i\pi}{4}}}{\half{|y-z|}} t\varphi(t^2z)dz\right|\\
\geq&\left|\int \frac{ 2\sqrt\pi t\varphi(t^2z)}{\half{|y-z|}} dz\right| \\
&-\left|\int \frac{ 2\sqrt\pi \left(e^{\frac{i}{4|y-z|}}\sgn(y-z)- e^{\frac{i}{4|y|}}\sgn y\right)e^{-\frac{i\pi}{4}}}{\half{|y-z|}} t\varphi(t^2z)dz\right|
\end{align*}
Since $\varphi$ doesn't change sign, we can move the absolute value inside the first term, and it is sufficient to show 
\begin{equation}\label{k1sts}
\left|\int \frac{ 2\sqrt\pi \left(e^{\frac{i}{4|y-z|}}\sgn(y-z)- e^{\frac{i}{4|y|}}\sgn y\right)e^{-\frac{i\pi}{4}}}{\half{|y-z|}} t\varphi(t^2z)dz\right| \leq \int \frac{ \sqrt\pi t|\varphi(t^2z)|}{\half{|y-z|}} dz.
\end{equation}
To show \eqref{k1sts}, we will use the assumption of compact support for $\varphi$. Assume that $2\delta > \frac{1}{M}$. Then, $\sgn(y-z) = \sgn y$. By the mean value theorem, we have
\[e^{\frac{i}{4|y-z|}}\sgn(y-z)- e^{\frac{i}{4|y|}}\sgn y = \frac{-ize^{\frac{i}{4p(z)}}}{p(z)^2}\]
for $p(z)\in (y-z, y+z)$ gives the correct point in this interval for the Mean Value Theorem. Then, 
 \begin{align*}
  \left|\displaystyle{\int}\right.&\left. \frac{ 2\sqrt\pi \left(e^{\frac{i}{4|y-z|}}\sgn(y-z)- e^{\frac{i}{4|y|}}\sgn y\right)e^{-\frac{i\pi}{4}}t\varphi(t^2z)dz}{\half{|y-z|}} \right| \\
&\leq\left|\int \frac{ -2i\sqrt\pi ze^{\frac{i}{4p(z)}} e^{-\frac{i\pi}{4}}}{p(z)^2\half{|y-z|}} t\varphi(t^2z)dz\right|\\
&\leq\int \frac{ 2\sqrt\pi |z|}{|p(z)|^2\half{|y-z|}} t|\varphi(t^2z)|dz
 \end{align*}
Since $|z| \leq \frac{1}{Mt^2}$, $p(z) \in (y-z, y+z)$, and $y>\frac{1}{t}$, we have \[\frac{|z|}{p(z)^2}\leq \frac{\frac{1}{Mt^2}}{|y-\frac{1}{Mt^2}|^2}\leq \frac{Mt^2}{|Mt^2y -1|^2}\leq \frac{M}{(M-1)^2}.\] For $M>4$ , we have $\frac{|z|}{p(z)^2}< \frac{1}{2}$, and so \eqref{k1bd} holds. 
\end{proof}
 
Finally, we need only to show that $\left|\int k_2(y-x) t\varphi(t^2z)dz\right|$ is less than or equal to half of this lower bound on the $k_1$ term. 

\begin{clm}
Let $\varphi$ and $k_2$ be defined as above. Then
\[\left|\int k_2(y-x) t\varphi(t^2z)dz\right|\leq \frac{\sqrt\pi}{2}\int \frac{t|\varphi(t^2z)|}{\half{|y-z|}}dz\]
\end{clm}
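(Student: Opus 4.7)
\end{clm}

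The plan is to prove the inequality by establishing a sharp pointwise bound $|k_2(x)|\le \tfrac{\sqrt\pi}{2}\,\mhalf{|x|}$ on the range $|x|\le \pi/64$ and then bringing the absolute value inside the integral. The integration by parts already carried out in the proof of Theorem \ref{odddecay} gives the crude bound $|k_2(|x|)|\le 8$; what is needed here is the refinement $|k_2(|x|)|\le 4$, which at $|x|=\pi/64$ matches \emph{exactly} the constant $\sqrt\pi/2$. This refinement rests on a cancellation that the earlier computation did not exploit.

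First, using the symmetry $\zeta \mapsto -\zeta$ in the second integral defining $k_2$, one rewrites
\[
k_2(|x|) \;=\; 4\,\mathrm{Re}\bigl(e^{-i/(4|x|)}\,A(|x|)\bigr), \qquad A(|x|) \,:=\, \int_{1/(2|x|)}^{\infty} e^{i|x|\zeta^{2}}\,d\zeta.
\]
A single integration by parts (exactly as performed earlier) yields
\[
A(|x|) \;=\; i\,e^{i/(4|x|)} + R(|x|), \qquad R(|x|) \,:=\, \int_{1/(2|x|)}^{\infty} \frac{e^{i|x|\zeta^{2}}}{2i|x|\zeta^{2}}\,d\zeta,
\]
with $|R(|x|)| \le \int_{1/(2|x|)}^{\infty}\!\tfrac{d\zeta}{2|x|\zeta^{2}} = 1$. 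Crucially, after multiplication by $e^{-i/(4|x|)}$ the boundary contribution becomes the purely imaginary number $i$, which drops out on taking the real part:
\[
\mathrm{Re}\bigl(e^{-i/(4|x|)}A(|x|)\bigr) \;=\; \mathrm{Re}\bigl(e^{-i/(4|x|)}R(|x|)\bigr), \qquad |k_2(|x|)| \;\le\; 4\,|R(|x|)| \;\le\; 4.
\]

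Second, I convert this uniform bound to the form needed on the support. Since $\supp\varphi\subseteq (-1/M,1/M)$, the rescaled function $z\mapsto \varphi(t^{2}z)$ is supported in $|z|\le 1/(Mt^{2})\le 1/M$ (for $t\ge 1$, as is automatic under $C/t\le|y|\le\delta$). Combined with $|y|\le \delta$ and the hypothesis $\delta + 1/M \le \pi/64$, this gives $|y-z|\le \pi/64$, hence $\mhalf{|y-z|}\ge 8/\sqrt\pi$, and therefore
\[
|k_2(y-z)| \;\le\; 4 \;=\; \frac{\sqrt\pi}{2}\cdot \frac{8}{\sqrt\pi} \;\le\; \frac{\sqrt\pi}{2}\,\mhalf{|y-z|}.
\]
Pulling absolute values inside the integral then yields the desired inequality
\[
\Bigl|\int k_2(y-z)\,t\varphi(t^{2}z)\,dz\Bigr| \;\le\; \int |k_2(y-z)|\, t|\varphi(t^{2}z)|\,dz \;\le\; \frac{\sqrt\pi}{2}\int \frac{t|\varphi(t^{2}z)|}{\half{|y-z|}}\,dz.
\]

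The one genuine obstacle is the cancellation in the first step: without isolating the boundary contribution $ie^{i/(4|x|)}$ and observing that it becomes purely imaginary after multiplication by $e^{-i/(4|x|)}$, one is stuck with the loose bound $|k_2(|x|)|\le 8$, which produces only the constant $\sqrt\pi$ in the final estimate. At the endpoint $|x|=\pi/64$ the two bounds differ by exactly the factor of two the argument requires, so this tight cancellation is not cosmetic but essential for closing the lower-bound program begun in the preceding claim.
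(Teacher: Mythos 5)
Your proof is correct and, notably, supplies a justification the paper itself glosses over. Both arguments follow the same skeleton: establish a uniform bound on $|k_2|$, then compare that constant to $\frac{\sqrt\pi}{2}|y-z|^{-1/2}$ on the range $|y-z|\le\delta+1/M\le\pi/64$, and finally pull absolute values inside the integral. The paper asserts $\left|\int k_2(y-z)\,t\varphi(t^2z)\,dz\right|\le 4\int t|\varphi(t^2z)|\,dz$, but the integration by parts it actually displays only bounds \emph{one} of the two conjugate terms in $k_2$ by $4$; since $k_2 = c+\bar c$, that computation as written yields only $|k_2|\le 8$, which at $|y-z|=\pi/64$ would leave a factor-of-two deficit and would require the stronger hypothesis $\delta+1/M\le\pi/256$. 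Your refinement — isolating the boundary contribution $ie^{i/(4|x|)}$ from the integration by parts and observing that, after multiplying by $e^{-i/(4|x|)}$, it becomes the purely imaginary constant $i$ and hence vanishes on taking the real part — tightens the estimate to $|k_2|=4\left|\mathrm{Re}\bigl(e^{-i/(4|x|)}R(|x|)\bigr)\right|\le 4|R(|x|)|\le 4$. That is exactly the cancellation needed to make the paper's stated constant, and therefore its stated hypothesis on $\delta$ and $M$, actually close. The remaining steps (support of $\varphi(t^2\cdot)$, triangle inequality, $\half{|y-z|}\le\sqrt\pi/8$) mirror the paper. In short: same strategy, but your version is the one that is correct to the constant, and it makes explicit a cancellation the paper uses implicitly.
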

\begin{proof}
Clearly, \[\left|\int k_2(y-x) t\varphi(t^2z)dz\right|\leq 4\int t|\varphi(t^2z)|dz.\] Now, all we need to show is that \[4\leq \frac{\sqrt\pi}{2\half{|y-z|}}.\] By the triangle inequality and our assumptions on $y$ and $M$ in the statement of the theorem, we have $|y-z|< |y|+|z|<\frac{\pi}{64}$. Then we have precisely that 
\[\frac{\sqrt\pi}{2\half{|y-z|}}\geq \frac{\sqrt\pi}{2}\frac{8}{\sqrt\pi}\geq 4,\] proving the claim. 
\end{proof}

By combining the two claims, we see that \begin{equation*}
 |u(t,yt^2)|\geq \frac{\sqrt\pi}{2}\int \frac{t|\varphi(t^2 z)|}{\half{|y-z|}}dz,
\end{equation*}
which completes the proof of the theorem. 
\end{proof}

The key ingredient in the proof above is the smallness of the support of $\varphi$. That assumption allows us to treat the kernel without worrying about the oscillatory factor $e^{\frac{i}{|y-z|}}$, which may contribute some cancellation in the region where $t^2 z\sim y$.  

\section*{Acknowledgements}
The author would like to thank the Institute for Mathematics and its Applications for its financial support during the theme year on Complex Fluids and Complex Flows, where this work was initiated and well as the NSF for its support through DMS grant 1101434. The author also 
thanks Sijue Wu for suggesting the problem and for many helpful discussions. 

\newpage
\appendix

\section{Precise Jacobian bounds}

In the following appendix, we collect the technical but not very deep results necessary to complete the proof of Theorem \ref{sharp}.

 Recall Proposition \ref{keysharp} relied on the lower bounds of certain Jacobian bounds. The Lemma below collects these bounds. 
\begin{lem}\label{Jacob}
\begin{enumerate}
\item 
If $J(\xi) =\dfrac{2\sgn(\xi-\xi_1) \sqrt{y\xi+|\xi|^a +(a-1)|\xi_1|^a)}}{y-a|\xi|^{a-1}}$ with $a>1$, then \[J(\xi) \geq \left\{\begin{array}{lr}
J(0) = \dfrac{2\sqrt{(a-1)|\xi_1|^a}}{y} = 2\half{(a-1)}a^{\frac{-a}{2(a-1)}}y^{\frac{a}{2(a-1)} -1}& a>2\\
&\\
J(-y^{\frac{1}{a-1}})=\dfrac{2 \sqrt{(a-1)|\xi_1|^a}}{(a-1)y} =\frac{2}{(a-1)^{\frac{1}{2}}}a^{\frac{-a}{2(a-1)}}y^{\frac{a}{2(a-1)}-1} &2>a>1\end{array}\right.\]\\
\item  
Let $\mathcal{J}(\xi) =\dfrac{-2\sgn(\xi-\xi_1) \sqrt{(1-a)|\xi_1|^a-y\xi+|\xi|^a}}{y|\xi|^{1-a}-a}$ with $0<a<1$; then,  \[\mathcal{J}(\xi) \geq \left\{\begin{array}{lr}
\mathcal{J}(0) = \dfrac{-2 \sqrt{(1-a)|\xi_1|^a}}{-a} = 2\half{(1-a)}a^{\frac{a-2}{2(a-1)}}y^{\frac{a}{2(a-1)}}& 1>a>1/2\\
&\\
\mathcal{J}(-y^{\frac{1}{a-1}})=\dfrac{2\sqrt{(1-a)|\xi_1|^a}}{{1-a}} =\frac{2}{(1-a)^{\frac{1}{2}}}a^{\frac{-a}{2(a-1)}}y^{\frac{a}{2(a-1)}} &1/2>a>0 \end{array}\right.\] 
\end{enumerate}
\end{lem}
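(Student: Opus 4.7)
The two parts are structurally identical; I describe the plan for Part~(1) with $a>1$ and indicate the modifications for Part~(2) at the end.

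\emph{Setup.} Set $F(\xi) = y\xi + |\xi|^a + (a-1)|\xi_1|^a$, so that the change of variables defining $\eta$ reads $(\eta-\xi_1)^2 = F(\xi)$. Because $\xi_1$ is the unique critical point of $y\xi + |\xi|^a$ on the negative axis, $F(\xi_1)=0$, and $F''(\xi)=a(a-1)|\xi|^{a-2}>0$, we have $F\geq 0$ on $(-y^{1/(a-1)},0)$ with a double zero at $\xi_1$. The sign ambiguity in the definition of $J$ is therefore eliminated by squaring:
\[
J(\xi)^2 \;=\; \frac{4\,F(\xi)}{F'(\xi)^2}, \qquad F'(\xi) \;=\; y - a|\xi|^{a-1},
\]
and the matching orders of vanishing at $\xi_1$ show that $J$ extends continuously across that point.

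\emph{Dimensionless reduction.} The substitution $t = |\xi|/|\xi_1|$ maps $(-y^{1/(a-1)},0)$ onto $(0,a^{1/(a-1)})$ and sends $\xi_1$ to $t=1$. Using $|\xi_1|^{a-1}=y/a$, direct calculation gives
\[
F(\xi) \;=\; |\xi_1|^a\bigl((a-1)-at+t^a\bigr), \qquad F'(\xi) \;=\; y(1-t^{a-1}),
\]
so that
\[
J(\xi)^2 \;=\; \frac{4|\xi_1|^a}{y^2}\,h(t), \qquad h(t) \;:=\; \frac{(a-1)-at+t^a}{(1-t^{a-1})^2}.
\]
The prefactor equals $4\,a^{-a/(a-1)}y^{(2-a)/(a-1)}$, giving
\[
J(\xi) \;=\; 2\,a^{-a/(2(a-1))}\,y^{a/(2(a-1))-1}\,\sqrt{h(t)},
\]
which isolates the power $y^{a/(2(a-1))-1}$ announced in the lemma. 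What remains is a uniform lower bound for $h$ on $(0,a^{1/(a-1)})$.

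\emph{Endpoint values and monotonicity of $h$.} A direct substitution (using $a\cdot a^{1/(a-1)}=a^{a/(a-1)}$ at the right endpoint to collapse the numerator) gives $h(0)=a-1$ and $h(a^{1/(a-1)})=1/(a-1)$, so $\sqrt{h}$ at the two endpoints produces exactly the constants $\sqrt{a-1}$ and $1/\sqrt{a-1}$ appearing in the lemma. A Taylor expansion of $F$ and $F'$ at $\xi_1$ to orders $2$ and $1$ gives $h(1)=a/[2(a-1)]$, lying strictly between the two endpoint values. To finish, I would differentiate $h$ and show that, after clearing the positive factor $(1-t^{a-1})^3$ (whose sign flip at $t=1$ is absorbed by the corresponding sign flip of the numerator of $h'$), the sign of $h'(t)$ equals that of
\[
P(t) \;=\; F'(\xi)^2 - 2F(\xi)F''(\xi) \;=\; y^2(1-t^{a-1})^2 - \frac{2(a-1)y^2}{a}\,t^{a-2}\bigl((a-1)-at+t^a\bigr),
\]
and then verify that $P$ is of fixed sign on the entire interval, with that sign determined by whether $a>2$ or $1<a<2$ (the boundary case $a=2$ corresponds to $h\equiv 1$). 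Monotonicity then identifies the infimum with the smaller of the two endpoint values.

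\emph{Part~(2) and main obstacle.} For $0<a<1$, substitute $\widetilde F(\xi)=(1-a)|\xi_1|^a - y\xi - |\xi|^a$; this is again nonnegative on $(-y^{1/(a-1)},0)$ with a double zero at $\xi_1$, and the same substitution $t=|\xi|/|\xi_1|$, together with the identities $|\xi_1|^{a-1}=y/a$ and $y|\xi_1|^{1-a}=a$, yields the dimensionless function $\widetilde h(t)=((1-a)+at-t^a)/(t^{1-a}-1)^2$ with endpoint values $1-a$ and $a^2/(1-a)$; the dichotomy now switches at $a=1/2$. The main obstacle in both parts is the sign analysis of $P(t)$: the exponents $a-2$, $a-1$, and $2a-2$ do not combine into a clean polynomial. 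The cleanest route is the further substitution $u=t^{a-1}$, converting $P/y^2$ into a rational function in $u$ whose unique zero can be located by elementary arguments.
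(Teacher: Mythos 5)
Your approach follows the paper's proof in substance: the quantity you call $P = F'^2 - 2FF''$ is exactly the numerator the paper denotes $N(\xi)$ (up to the $\sgn(\xi-\xi_1)$ factor that you correctly absorb into the sign flip of $(1-t^{a-1})^3$), and both arguments reduce to showing this quantity changes sign only at $\xi_1$. Your dimensionless substitution $t = |\xi|/|\xi_1|$, which normalizes the prefactor $2a^{-a/(2(a-1))}y^{a/(2(a-1))-1}$ and reduces everything to the $y$-independent function $h(t)$ with endpoint values $a-1$ and $1/(a-1)$, is a clean packaging the paper does not use. For the sign analysis itself the paper's route is somewhat more direct than the $u = t^{a-1}$ substitution you propose: differentiating $N$ gives the very clean expression $N'(\xi) = \sgn(\xi-\xi_1)\cdot 2a(a-1)(a-2)|\xi|^{a-3}F(\xi)$, which is visibly single-signed on each side of $\xi_1$ because $F\geq 0$; since $N(\xi_1)=0$, this immediately shows $N$ has no other zeroes and determines its sign. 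You may prefer to compute $P' = -2FF'''$ directly in your normalization, which gives the same thing.

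One caution before you close the argument. Your remark that ``monotonicity identifies the infimum with the smaller of the two endpoint values'' is correct, but if you carry it out you will obtain the \emph{opposite} case assignment from the lemma statement in Part (1). For $a>2$, $N\geq 0$ so $J$ is non-decreasing on $(-y^{1/(a-1)},0)$ (the paper says this too), hence the infimum is at the \emph{left} endpoint $\xi = -y^{1/(a-1)}$, giving $J\geq J(-y^{1/(a-1)}) = \frac{2}{\sqrt{a-1}}a^{-a/(2(a-1))}y^{a/(2(a-1))-1}$, not $J(0)$; for $1<a<2$ it is reversed. In your normalization this is the observation that $\sqrt{h(a^{1/(a-1)})}=1/\sqrt{a-1} < \sqrt{a-1}=\sqrt{h(0)}$ when $a>2$, and an increasing $h$ attains its minimum at $t=a^{1/(a-1)}$ (the left endpoint in $\xi$). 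The formulas in Part (1) of the lemma appear to be attached to the wrong branches of the dichotomy, contradicting the paper's own intermediate monotonicity claim; a quick numerical check (e.g.\ $a=4$, $y=1$) confirms $J(-1) < J(0)$. Part (2), for $0<a<1$, does assign the smaller endpoint value to each case, so the slip is confined to Part (1). Since Lemma~\ref{keysharp} uses the bound only up to a constant $C(a)$ and the power of $y$ is identical at both endpoints, this does not affect the main results, but your writeup should record the corrected assignment.
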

\begin{proof}
First, observe that $J(\xi)$ is continuous. The only possible point of discontinuity is at $\xi_1$, but by l'Hopital's rule, 
\begin{eqnarray*}
\lim_{\xi\rightarrow \xi_1}J(\xi) &=& \lim_{\xi\rightarrow \xi_1} \dfrac{2\sgn(\xi-\xi_1) \sqrt{y\xi+|\xi|^a +(a-1)|\xi_1|^a}}{y-a|\xi|^{a-1}}\\
&=&\lim_{\xi\rightarrow \xi_1} \dfrac{\sgn(\xi-\xi_1) \left(y\xi+|\xi|^a +(a-1)|\xi_1|^a\right)^{-1/2}(y-a|\xi|^{a-1})}{a(a-1)|\xi|^{a-2}}\\
&=&\frac{2}{a(a-1)|\xi_1|^{a-2}}\lim_{\xi\rightarrow \xi_1}\frac{1}{J(\xi)}\\
\Rightarrow &&\lim_{\xi\rightarrow \xi_1}J(\xi) = \sqrt{2}(a(a-1)|\xi_1|^{a-2})^{-1/2}.
\end{eqnarray*} 
Since $J(\xi)$ is continuous, the natural way to find a lower bound is to consider the derivative of $J(\xi)$ and check for critical points. We will show that there are no critical points of $J(\xi)$ in the chosen interval, and therefore the lower bound is at one of the endpoints (which endpoint depends on the value of a). When $a=2$, all these machinations are unnecessary as $J(\xi) = C$. From this point forward, we will assume that $a\neq 2$. First, observe that the derivative of $J(\xi)$ is \[J'(\xi) = \frac{\sgn(\xi-\xi_1)\left[-2a(a-1)|\xi|^{a-2}(y\xi+|\xi|^a +(a-1)|\xi_1|^a)+(y-a|\xi|^{a-1})^2\right]}{(y-a|\xi|^{a-1})^2(y\xi+|\xi|^a +(a-1)|\xi_1|^a)^{1/2}}.\] Clearly, the numerator is $0$ at $\xi_1$. Since $\xi_1$ is also a zero of the denominator and it is easy to check using Taylor expansions that $J'(\xi_1)\neq 0$ and is, in fact, positive and finite (implying that $J'(\xi)$ is continuous), it suffices to check if the numerator $N(\xi)$ has any additional zeroes. Since \[N(\xi)=\sgn(\xi-\xi_1)\left[-
2a(a-1)|\xi|^{a-2}(y\xi+|\xi|^a +(a-1)|\xi_1|^a)+(y-a|\xi|^{a-1})^2\right]\] has a zero at $\xi_1$, the only way for $N$ to have additional zeroes is if $N'(\xi)$ is zero at a point besides $\xi_1$. Now, 
\[N'(\xi) = \sgn(\xi-\xi_1)2a(a-1)(a-2)|\xi|^{a-3}(y\xi +|\xi|^a +(a-1)|\xi_1|^a).\]   
By construction, $y\xi +|\xi|^a +(a-1)|\xi_1|^a\geq 0$ and equal to zero only at $\xi_1$, so the only additional possible zero is $0$ and then only when $a>3$. Therefore, $N(\xi)$ has no additional zeroes in the open interval $(-y^{\frac{1}{a-1}},0)$, and $J(\xi)$ is monotone increasing when $a>2$ and monotone decreasing when $1<a<2$ (since $N(\xi) \geq 0$ for $a>2$ and $N(\xi)\leq 0$ for $1<a<2$). Therefore, we have \[J(\xi) \geq \left\{\begin{array}{lr}
J(0) = \dfrac{2\sqrt{(a-1)|\xi_1|^a}}{y} = 2\half{(a-1)}a^{\frac{-a}{2(a-1)}}y^{\frac{a}{2(a-1)}}y^{-1}& a>2\\
&\\
J(-y^{\frac{1}{a-1}})=\dfrac{2 \sqrt{(a-1)|\xi_1|^a}}{(a-1)y} =2\mhalf{(a-1)}a^{\frac{-a}{2(a-1)}}y^{\frac{a}{2(a-1)}}y^{-1} &2>a>1
\end{array}\right.\] which completes the proof of part 1.  

Now, observe that $\mathcal{J}(\xi)$ is also continuous. The only possible point of discontinuity is at $\xi_1$, but by l'Hopital's rule, 
\begin{eqnarray*}
\lim_{\xi\rightarrow \xi_1}\mathcal{J}(\xi) &=& \lim_{\xi\rightarrow \xi_1} \dfrac{-2\sgn(\xi-\xi_1) \sqrt{(1-a)|\xi_1|^a-y\xi-|\xi|^a }}{{y|\xi|^{1-a}-a}}\\
&=&\lim_{\xi\rightarrow \xi_1} \dfrac{-\sgn(\xi-\xi_1) \left((1-a)|\xi_1|^a-y\xi-|\xi|^a \right)^{-1/2}(-y+a|\xi|^{a-1})}{-y(1-a)|\xi|^{-a}}\\
&=&\frac{1}{y(1-a)}\lim_{\xi\rightarrow \xi_1} \dfrac{|\xi|^a(y-a|\xi|^{a-1})}{-\sgn(\xi-\xi_1) \left((1-a)|\xi_1|^a-y\xi-|\xi|^a \right)^{1/2}}\\
&=&\frac{2|\xi_1|^{2a-1}}{a(1-a)|\xi_1|^{a-1}}\lim_{\xi\rightarrow \xi_1} \frac{1}{J(\xi)}\\
\Rightarrow &&\lim_{\xi\rightarrow \xi_1}J(\xi) = \sqrt{2|\xi_1|^{a}}(a(a-1))^{-1/2}.
\end{eqnarray*} 
Since $\mathcal{J}(\xi)$ is continuous, the natural way to find a lower bound is to consider the derivative of $\mathcal{J}(\xi)$, and (if it is continuous as well) check for critical points. We will show that $\mathcal{J}'(\xi)$ is strictly postive when $0<a<1/2$ and strictly negative when $1/2<a<1$.  When $a=1/2$, $\mathcal{J}(\xi) = C$, so we will assume $a\neq 1/2$. First, observe that the derivative of $\mathcal{J}(\xi)$ is \[\mathcal{J}'(\xi) = \frac{\sgn(\xi-\xi_1)\left[-2(1-a)y|\xi|^{1-a}((1-a)|\xi_1|^a-y\xi-|\xi|^a )+|\xi|^a|(y|\xi|^{1-a}-a)^2\right]}{|\xi|(y-a|\xi|^{a-1})^2(y\xi+|\xi|^a +(a-1)|\xi_1|^a)^{1/2}}.\] Clearly, the numerator is $0$ at $\xi_1$. Since $\xi_1$ is also a zero of the denominator and it is easy to check using Taylor expansions that $J'(\xi_1)\neq 0$ and is precisely $C(a)(1-2a)|\xi_1|^{5a/2 -3}$, where $C(a)>0$. The numerator also has a zero at $0$, but the $|\xi|$ will force $\mathcal{J}'(\xi)$ to go to positive or negative infinity as $\xi\nearrow 0$. In order to find 
critical points that could be extrema of $\mathcal{J}(\xi)$, it suffices to check if the numerator $N(\xi)$ has any additional zeroes. As in the case $a>1$, we will analyze the numerator with its derivative to check for zeroes. The numerator is slightly more complicated in this case, and it must have a critical point between $\xi_1$ and $0$ by Rolle's theorem. However, the derivative has other properties which will allow us to draw the necessary conclusions. 

\begin{clm}
The only zeroes of $N(\xi)$ are at $\xi=\xi_1,\,0$. Moreover, for $0<a<1/2$, $N(\xi)\leq 0$ for $\xi\in(-y^{1/(a-1)},\xi_1)$ and $N(\xi)\geq 0$ for $\xi\in(\xi_1, 0)$. For $1/2<a<1$, $N(\xi)\geq 0$ for $\xi\in(-y^{1/(a-1)},\xi_1)$ and $N(\xi)\leq 0$ for $\xi\in(\xi_1, 0)$  
\end{clm}
\begin{proof}
Rather than draw conclusions about arbitrary $a$, we will discuss $0<a<1/2$, but exactly the same arguments will yield similar conclusions for $1/2<a<1$, just with opposite signs.
Since $N(\xi)=-2(1-a)y|\xi|^{1-a}((1-a)|\xi_1|^a-y\xi-|\xi|^a)+|\xi|^a(y|\xi|^{1-a}-a)^2$, we can show that \begin{equation}\label{Na<1}|\xi|N'(\xi) +(1-a)N(\xi) = (1-2a)|\xi|^a(y|\xi|^{1-a}-a)^2.\end{equation} The right hand side is always the same sign except at its zeroes, $\xi_1$ and $0$. From this equation, we can conclude that $N'(\xi_1)=N''(\xi_1)=0$, but $N'''(\xi_1) = 2a^2(1-a)^2(1-2a)|\xi_1|^{a-3}$, so near $\xi_1$, the function $N(\xi)$ is a positive cubic. 
The equation (\ref{Na<1}) also implies that at any point $x\in(-y^{-1/(a-1)},0)$ not equal to $\xi_1$ or $0$ such that $N(x)=0$ must satisfy $N'(x)>0$. This fact means that in the subinterval $(\xi_1,0)$, there can be no additional zeroes of $N(\xi)$.  
Since $N(-y^{\frac{1}{a-1}})<0$ and $N(\xi)$ approaches zero from below as $\xi\nearrow\xi_1$, there can only be an even number of zeroes in the subinterval $(-y^{\frac{1}{a-1}},\xi_1)$. At one of the zeroes, $N(\xi)$ must be decreasing, but that would contradict equation (\ref{Na<1}). Therefore there are no zeroes of $N(\xi)$  in the subinterval $(-y^{\frac{1}{a-1}},0)$. Combining these two subintervals, we conclude that or $0<a<1/2$, $N(\xi)\leq 0$ for $\xi\in(-y^{1/(a-1)},\xi_1)$ and $N(\xi)\geq 0$ for $\xi\in(\xi_1, 0)$. Thus the claim is proved.    
\end{proof}
If we return to $\mathcal{J}'(\xi)$ and apply this claim, we find that when $0<a<1/2$, \, $\mathcal{J}'(\xi)>0$ for $\xi\in(-y^{\frac{1}{a-1}},0)$ and when $1/2<a<1$, $\mathcal{J}'(\xi)<0$ for $\xi\in(-y^{\frac{1}{a-1}},0)$. Thus, 
 \[\mathcal{J}(\xi) \geq \left\{\begin{array}{lr}
\mathcal{J}(0) = \dfrac{-2 \sqrt{(1-a)|\xi_1|^a}}{-a} = 2\half{(1-a)}a^{\frac{a-2}{2(a-1)}}y^{\frac{a}{2(a-1)}}& 1>a>1/2\\
&\\
\mathcal{J}(-y^{\frac{1}{a-1}})=\dfrac{2\sqrt{(1-a)|\xi_1|^a}}{{1-a}} =2\mhalf{(1-a)}a^{\frac{-a}{2(a-1)}}y^{\frac{a}{2(a-1)}} &1/2>a>0
\end{array}\right.\] 
\end{proof}

\newpage
\bibliography{papers2014.bib}
\bibliographystyle{plain}

\end{document}